\newtheorem{assumption}{Assumption}
\newtheorem{property}{Property}
\newtheorem{problem}{Problem}
\begin{document}
\fancyhead{}
%
\title{ADMM for Efficient Deep Learning with Global Convergence}

%

%
\begin{abstract}
Alternating Direction Method of Multipliers (ADMM) has been used successfully in many conventional machine learning applications and is considered to be a useful alternative to Stochastic Gradient Descent (SGD) as a deep learning optimizer. However, as an emerging domain, several challenges remain, including 1) The lack of global convergence guarantees, 2) Slow convergence towards solutions, and 3) Cubic time complexity with regard to feature dimensions. In this paper, we propose a novel optimization framework for deep learning via ADMM (dlADMM) to address these challenges simultaneously. The parameters in each layer are updated backward and then forward so that the parameter information in each layer is exchanged efficiently. The time complexity is reduced from cubic to quadratic in (latent) feature dimensions via a dedicated algorithm design for subproblems that enhances them utilizing iterative quadratic approximations and backtracking. Finally, we provide the first proof of global convergence for an ADMM-based method (dlADMM) in a deep neural network problem under mild conditions. Experiments on benchmark datasets demonstrated that our proposed dlADMM algorithm outperforms most of the comparison methods.
\end{abstract}

%
%

%
\keywords{Deep Learning, Global Convergence, Alternating Direction Method of Multipliers}
\author{Junxiang Wang, Fuxun Yu, Xiang Chen and Liang Zhao} 
\affiliation{%
  \institution{George Mason University}
}
\email{{jwang40,fyu2,xchen26,lzhao9}@gmu.edu}
\maketitle
 \section{Introduction}
 \indent Deep learning has been a hot topic in the machine learning community for the last decade. While conventional machine learning techniques have limited capacity to process natural data in their raw form, deep learning methods are composed of non-linear modules that can learn multiple levels of representation automatically \cite{lecun2015deep}. Since deep learning methods are usually applied in large-scale datasets, such approaches require efficient optimizers to obtain a feasible solution within realistic time limits.\\
 \indent Stochastic Gradient Descent (SGD) and many of its variants are popular state-of-the-art methods for training deep learning models due to their efficiency.  However,  SGD suffers from many limitations that prevent its more widespread use: for example, the error signal diminishes as the gradient is backpropagated (i.e. the gradient vanishes); and SGD is sensitive to poor conditioning, which means a small input can change the gradient dramatically. Recently, the use of the Alternating Direction Method of Multipliers (ADMM) has been proposed as an alternative to SGD. ADMM splits a problem into many subproblems and coordinates them globally to obtain the solution. It has been demonstrated successfully for many machine learning applications  \cite{boyd2011distributed}. The advantages of ADMM are numerous: it exhibits linear scaling as data is processed in parallel across cores; it does not require gradient steps and hence avoids gradient vanishing problems; it is also immune to poor conditioning \cite{taylor2016training}. \\
\indent Even though the performance of the ADMM seems promising, there are still several challenges at must be overcome: \textbf{1. The lack of global convergence guarantees.} Despite the fact that many empirical experiments have shown that ADMM converges in deep learning applications, the underlying theory governing this convergence behavior remains mysterious. This is because a typical deep learning problem consists of a combination of linear and nonlinear mappings, causing optimization problems to be highly nonconvex. This means that traditional proof techniques cannot be directly applied. \textbf{2. Slow convergence towards solutions.} Although ADMM is a powerful optimization framework that can be applied to large-scale deep learning applications, it usually converges slowly to high accuracy, even for simple examples \cite{boyd2011distributed}. It is often the case that ADMM becomes trapped in a modest solution and hence performs worse than SGD, as the experiment described later in this paper in Section \ref{sec:experiment} demonstrates. \textbf{3. Cubic time complexity with regard to feature dimensions.} The implementation of the ADMM is very time-consuming for real-world datasets. Experiments conducted by Taylor et al. found that ADMM required more than 7000 cores to train a neural network with just 300 neurons \cite{taylor2016training}. This computational bottleneck mainly originates from the matrix inversion required to update the weight parameters. Computing an inverse matrix needs further subiterations, and its time complexity is approximately $O(n^3)$, where $n$ is a feature dimension \cite{boyd2011distributed}.\\
\indent In order to deal with these difficulties simultaneously,  in this paper we propose a novel optimization framework for a deep learning Alternating Direction Method of Multipliers (dlADMM) algorithm. Specifically, our new dlADMM algorithm updates parameters first in a backward direction and then forwards. This update approach propagates parameter information across the whole network and accelerates the convergence process. It also avoids the operation of matrix inversion using the quadratic approximation and backtracking techniques, reducing the time complexity from $O(n^3)$ to $O(n^2)$.  Finally,  to the best of our knowledge, we provide the first proof of the global convergence of the ADMM-based method (dlADMM) in a deep neural network problem. The assumption conditions are mild enough for many common loss functions (e.g. cross-entropy loss and square loss) and activation functions (e.g. rectified linear unit (ReLU) and leaky ReLU) to satisfy. Our proposed framework and convergence proof are highly flexible for fully-connected deep neural networks, as well as being easily extendable to other popular network architectures such as Convolutional Neural Networks \cite{krizhevsky2012imagenet} and Recurrent Neural Networks \cite{mikolov2010recurrent}. Our contributions in this paper include:
\begin{itemize}
\item We present a novel and efficient dlADMM algorithm to handle the fully-connected deep neural network problem. The new dlADMM updates parameters in a backward-forward fashion to speed up the convergence process.
\item We propose the use of quadratic approximation and backtracking techniques to avoid the need for matrix inversion as well as reducing the computational cost for large scale datasets. The time complexity of subproblems in dlADMM is reduced from $O(n^3)$ to $O(n^2)$. 
\item  We investigate several attractive convergence properties of  dlADMM. The convergence assumptions are very mild to ensure that most deep learning applications satisfy our assumptions. dlADMM is guaranteed to converge to a critical point globally (i.e., whatever the initialization is) when the hyperparameter is sufficiently large. We also analyze the new algorithm's sublinear convergence rate.
\item We conduct experiments on several benchmark datasets to validate our proposed dlADMM algorithm. The results show that the proposed dlADMM algorithm performs better than most existing state-of-the-art algorithms, including SGD and its variants.
\end{itemize}
\indent The rest of this paper is organized as follows. In Section \ref{sec:related work}, we summarize recent research related to this topic. In Section \ref{sec:algorithm}, we present the new dlADMM algorithm, quadratic approximation, and the backtracking techniques utilized. In Section \ref{sec:convergence}, we introduce the main convergence results for the dlADMM algorithm. The results of extensive experiments conducted to show the convergence, efficiency, and effectiveness of our proposed new dlADMM algorithm are presented in in Section \ref{sec:experiment}, and Section \ref{sec:conclusion} concludes this paper by summarizing the research.
 \section{Related Work}
 \label{sec:related work}
 Previous literature related to this research includes optimization for deep learning models and ADMM for nonconvex problems.\\
 \indent \textbf{Optimization for deep learning models:} The SGD algorithm and its variants play a dominant role in the research conducted by deep learning optimization community. The famous back-propagation algorithm was firstly introduced by Rumelhart et al. to train the neural network effectively \cite{rumelhart1986learning}. Since the superior performance exhibited by AlexNet \cite{krizhevsky2012imagenet} in 2012, deep learning has attracted a great deal of  researchers' attention and many new optimizers based on SGD have been proposed to accelerate the convergence process, including the use of Polyak momentum \cite{polyak1964some}, as well as research on the Nesterov momentum and initialization by Sutskever et al.  \cite{sutskever2013importance}. Adam is the most popular method because it is computationally efficient and requires little tuning \cite{kingma2014adam}. Other well-known methods that incorporate  adaptive learning rates include AdaGrad \cite{duchi2011adaptive}, RMSProp \cite{tielemandivide},  and AMSGrad \cite{
j.2018on}. Recently, the Alternating Direction Method of Multipliers (ADMM) has become popular with researchers due to its excellent scalability \cite{taylor2016training}. However, even though these optimizers perform well in real-world applications, their convergence mechanisms remain mysterious. This is because convergence assumptions are not applicable to deep learning problems, which often require non-differentiable activation functions such as the Rectifier linear unit (ReLU).\\
\textbf{ADMM for nonconvex problems}: The good performance achieved by ADMM over a range wide of convex problems has attracted the attention of many researchers, who have now begun to investigate the behavior of ADMM on nonconvex problems and made significant advances.
 For example, Wang et al. proposed an ADMM to solve multi-convex problems with a convergence guarantee \cite{wang2019multi}, while Wang et al. presented convergence conditions for a coupled objective function that is nonconvex and nonsmooth \cite{wang2015global}. Chen et al.  discussed the use of ADMM to solve problems with quadratic coupling terms \cite{chen2015extended} and Wang et al. studied the convergence behavior of the ADMM for problems with nonlinear equality constraints \cite{wang2017nonconvex}. Even though ADMM has been proposed to solve deep learning applications  \cite{taylor2016training,gao2019incomplete}, there remains a lack theoretical convergence analysis for the application of ADMM to such problems.
 \begin{table}
 \small
 \centering
 \caption{Important Notations and Descriptions}
 \begin{tabular}{cc}
 \hline
 Notations&Descriptions\\ \hline
 $L$& Number of layers.\\
 $W_l$& The weight matrix for the $l$-th layer.\\
 $b_l$& The intercept vector for the $l$-th layer.\\
 $z_l$& The temporary variable of the linear mapping for the $l$-th layer.\\
 $f_l(z_l)$& The nonlinear activation function for the $l$-th layer.\\
 $a_l$& The output for the $l$-th layer.\\
 $x$& The input matrix of the neural network.\\
 $y$& The predefined label vector.\\
 $R(z_L,y)$& The risk function for the $l$-th layer.\\
 $\Omega_l(W_l)$& The regularization term for the $l$-th layer.\\
 $n_l$& The number of neurons for the $l$-th layer.\\
\hline
 \end{tabular}
 \label{tab:notation}
 \end{table}
 \section{The dlADMM Algorithm}
  \label{sec:algorithm}
 We present our proposed dlADMM algorithm in this section. Section \ref{sec:problem} formulates the deep neural network problem, Section \ref{sec:dlADMM} introduces how the dlADMM algorithm works, and the quadratic approximation and backtracking techniques used to solve the subproblems are presented in Section \ref{sec:quadratic}.

\subsection{Problem Formulation}
\label{sec:problem}
 \ \indent \ \indent Table \ref{tab:notation} lists the important notation utilized in this paper. Even though there are many variants of formulations for deep neural networks, a typical neural network is defined by multiple linear mappings and nonlinear activation functions. A linear mapping for the $l$-th layer is composed of a weight matrix $W_l\in \mathbb{R}^{n_l\times n_{l-1}}$ and an intercept vector $b_l\in \mathbb{R}^{n_l}$, where $n_l$ is the number of neurons for the $l$-th layer; a nonlinear mapping for the $l$-th layer is defined by a continuous activation function $f_l(\bullet)$. Given an input $a_{l-1}\in \mathbb{R}^{n_{l-1}}$ from the $(l-1)$-th layer, the $l$-th layer outputs $a_l=f_l(W_la_{l-1}+b_l)$. Obviously, $a_{l-1}$ is nested in $a_{l}=f_l(\bullet)$. By introducing an auxiliary variable $z_l$,  the task of training a deep neural network problem is formulated mathematically as follows:
\begin{problem}
\label{prob:problem 1}
\begin{align*}
     & \min\nolimits_{W_l,b_l,z_l,a_l} R(z_L;y)+\sum\nolimits_{l=1}^L\Omega_l(W_l)\\
     &s.t.z_l=W_la_{l-1}+b_l(l=1,\cdots,L),\ a_l=f_l(z_l) \ \!(l=1,\cdots,L-1)
\end{align*}
\end{problem}
In Problem \ref{prob:problem 1}, $a_0=x\in\mathbb{R}^{n_0}$ is the input of the deep neural network where $n_0$ is the number of feature dimensions, and $y$ is a predefined label vector. $R(z_L;y)$ is a risk function for the $L$-th layer, which is convex, continuous and proper,  and $\Omega_l(W_l)$ is a regularization term for the $l$-th layer, which is also convex, continuous, and proper. Rather than solving Problem \ref{prob:problem 1} directly,  we can relax Problem \ref{prob:problem 1} by adding an $\ell_2$ penalty to address Problem \ref{prob:problem 2} as follows:
\begin{problem}
\label{prob:problem 2}
\begin{align*}
    &\min\nolimits_{W_l,b_l,z_l,a_l}F(\textbf{W},\textbf{b},\textbf{z},\textbf{a})=R(z_L;y)\!+\!\sum\nolimits_{l=1}^L\Omega_l(W_l)\\&+(\nu/2)\sum\nolimits_{l=1}^{L-1}(\Vert z_l-W_la_{l-1}-b_l\Vert^2_2+\Vert a_l-f_l(z_l)\Vert^2_2)\\ &s.t. \ z_L=W_La_{L-1}+b_L
\end{align*}
\end{problem}
where $\textbf{W}=\{W_l\}_{l=1}^{L}$, $\textbf{b}=\{b_l\}_{l=1}^{L}$, $\textbf{z}=\{z_l\}_{l=1}^{L}$, $\textbf{a}=\{a_l\}_{l=1}^{L-1}$ and $\nu>0$ is a tuning parameter. Compared with Problem \ref{prob:problem 1}, Problem \ref{prob:problem 2} has only a linear constraint $z_L=W_La_{L-1}+b_L$ and hence is easier to solve. It is straightforward to show that as $\nu\rightarrow \infty$, the solution of Problem \ref{prob:problem 2} approaches that of Problem \ref{prob:problem 1}.
\subsection{The dlADMM algorithm}
\label{sec:dlADMM}
\indent We introduce the dlADMM algorithm to solve Problem \ref{prob:problem 2} in this section. The traditional ADMM strategy for optimizing  parameters is to start from the first layer and then update parameters in the following layer sequentially \cite{taylor2016training}. In this case, the parameters in the final layer are subject to the parameter update in the first layer.  However, the parameters in the final layer contain important information that can be transmitted  towards the previous layers  to speed up convergence. To achieve this, we propose our novel dlADMM framework, as shown in Figure \ref{fig:framwork overview}. Specifically, the dlADMM algorithm updates parameters in two steps. In the first, the dlADMM begins updating from the $L$-th (final) layer and moves backward toward the first layer. The update order of parameters in the same layer is $a_l\rightarrow z_l\rightarrow b_l\rightarrow W_l$. In the second, the dlADMM reverses the update direction, beginning at the first layer and moving forward toward the $L$-th (final) layer. The update order of the parameters in the same layer is $W_l\rightarrow b_l\rightarrow z_l\rightarrow a_l$. The parameter information for all layers can be exchanged completely by adopting this update approach.\\
\begin{figure}
    \centering
    \includegraphics[width=\linewidth]{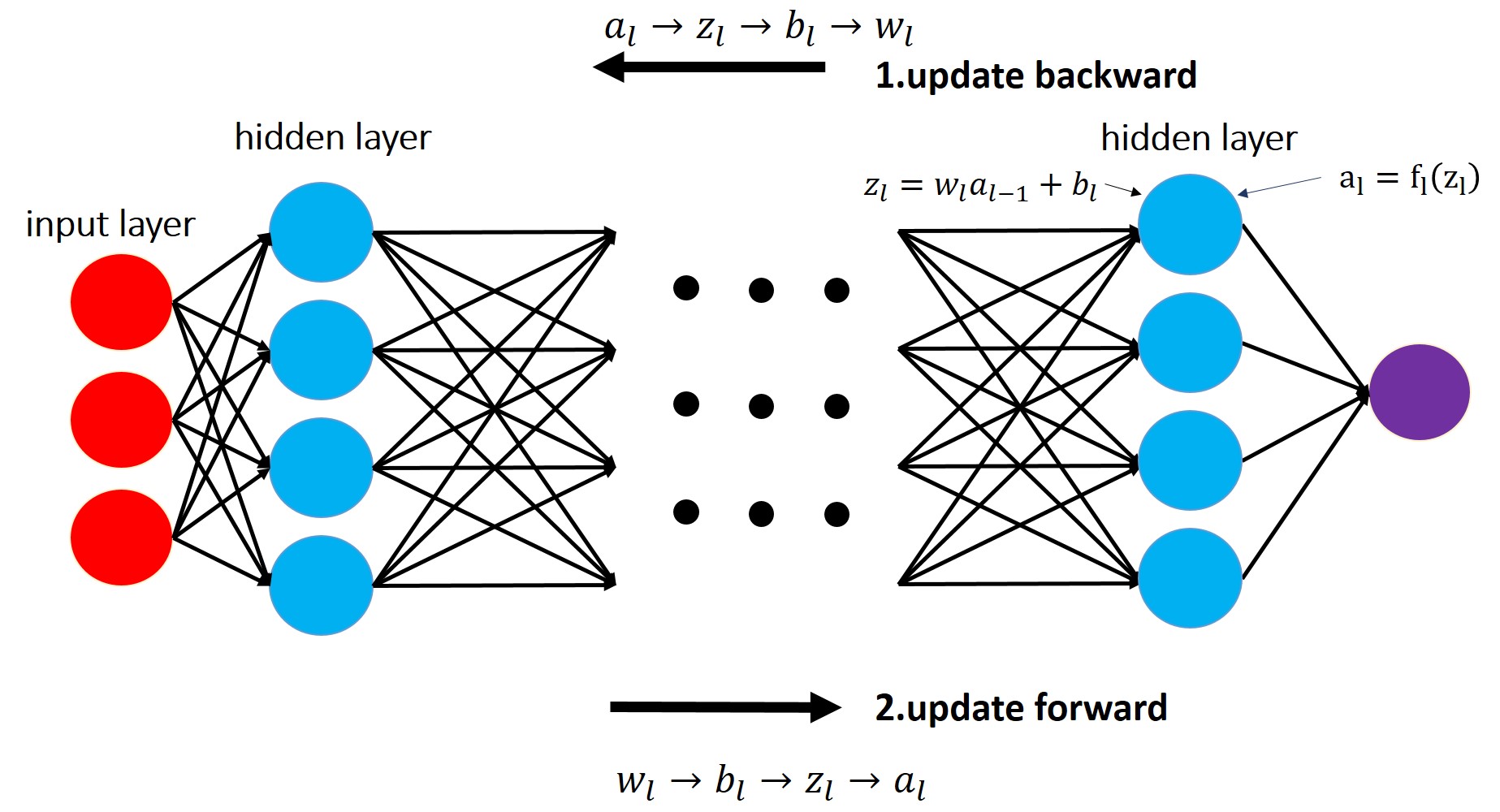}
    \caption{The dlADMM framework overview: update parameter backward and then forward.}
    \label{fig:framwork overview}
\end{figure}
\indent Now we can present our dlADMM algorithm mathematically.
The augmented Lagrangian function of Problem \ref{pro:property 2} is shown in the following form:
\begin{align}
    &L_\rho(\textbf{W}\!,\!\textbf{b},\!\textbf{z},\!\textbf{a},\!u)\!=R(z_L;y)\!+\!\sum\nolimits_{l=1}^L\Omega_l(W_l)\!+\phi(\textbf{W}\!,\!\textbf{b},\!\textbf{z},\!\textbf{a},u)
    \label{eq: Lagrangian}
    \end{align}
where $\phi(\textbf{W}\!,\!\textbf{b},\!\textbf{z},\!\textbf{a},\!u)=\!(\nu/2)\sum\nolimits_{l=1}^{L\!-\!1} (\Vert z_l\!-\!W_l\!a_{l\!-\!1}\!-\!b_l\Vert^2_2+\Vert a_l-f_l(z_l)\Vert^2_2)+u^T(z_L-W_La_{L-1}-b_L)+(\rho/2)\Vert z_L-W_La_{L-1}-b_L\Vert^2_2$, $u$ is a dual variable and $\rho>0$ is a hyperparameter of the dlADMM algorithm. We denote $\overline{W}^{k+1}_l$, $\overline{b}^{k+1}_l$, $\overline{z}^{k+1}_l$ and $\overline{a}^{k+1}_l$ as the backward update of the dlADMM for the $l$-th layer in the $(k+1)$-th iteration , while ${W}^{k+1}_l$, ${b}^{k+1}_l$, ${z}^{k+1}_l$ and ${a}^{k+1}_l$ are denoted as the forward update of the dlADMM for the $l$-th layer in the $(k+1)$-th iteration. Moreover, we denote $\overline{\textbf{W}}^{k+1}_l=\{\{{W}^{k}_i\}_{i=1}^{l-1},\{\overline{{W}}^{k+1}_i\}_{i=l}^L\}$, $\overline{\textbf{b}}^{k+1}_l=\{\{{b}^{k}_i\}_{i=1}^{l-1},\{\overline{{b}}^{k+1}_i\}_{i=l}^L\}$, $\overline{\textbf{z}}^{k+1}_l=\{\{{z}^{k}_i\}_{i=1}^{l-1},\{\overline{{z}}^{k+1}_i\}_{i=l}^L\}$,\\ $\overline{\textbf{a}}^{k+1}_l=\{\{{a}^{k}_i\}_{i=1}^{l-1},\{\overline{{a}}^{k+1}_i\}_{i=l}^{L-1}\}$, ${\textbf{W}}^{k+1}_l=\{\{{W}^{k+1}_i\}_{i=1}^{l},\{\overline{{W}}^{k+1}_i\}_{i=l+1}^{L}\}$, ${\textbf{b}}^{k+1}_l=\{\{{b}^{k+1}_i\}_{i=1}^{l},\{\overline{{b}}^{k+1}_i\}_{i=l+1}^{L}\}$, ${\textbf{z}}^{k+1}_l=\{\{{z}^{k+1}_i\}_{i=1}^{l},\{\overline{{z}}^{k+1}_i\}_{i=l+1}^{L}\}$, ${\textbf{a}}^{k+1}_l=\{\{{a}^{k+1}_i\}_{i=1}^{l},\{\overline{{a}}^{k+1}_i\}_{i=l+1}^{L-1}\}$,
$\overline{\textbf{W}}^{k+1}=\{\overline{W}_i^{k+1}\}_{i=1}^L$, $\overline{\textbf{b}}^{k+1}=\{\overline{b}_i^{k+1}\}_{i=1}^L$,
$\overline{\textbf{z}}^{k+1}=\{\overline{z}_i^{k+1}\}_{i=1}^L$,
$\overline{\textbf{a}}^{k+1}=\{\overline{a}_i^{k+1}\}_{i=1}^{L-1}$, $\textbf{W}^{k+1}=\{W_i^{k+1}\}_{i=1}^L$, $\textbf{b}^{k+1}=\{b_i^{k+1}\}_{i=1}^L$,
$\textbf{z}^{k+1}=\{z_i^{k+1}\}_{i=1}^L$, and
$\textbf{a}^{k+1}=\{a_i^{k+1}\}_{i=1}^{L-1}$. Then the dlADMM algorithm is shown in Algorithm \ref{algo:dlADMM}. Specifically, Lines 5, 6, 10, 11, 14, 15, 17 and 18 solve eight subproblems, namely,  $\overline{a}^{k+1}_l$, $\overline{z}^{k+1}_l$, $\overline{b}^{k+1}_l$, $\overline{W}^{k+1}_l$, ${W}^{k+1}_l$, ${b}^{k+1}_l$, ${z}^{k+1}_l$ and ${a}^{k+1}_l$, respectively. Lines 21 and 22 update the residual $r^{k+1}$ and the dual variable $u^{k+1}$, respectively.
\begin{algorithm} 
\scriptsize
\caption{the dlADMM Algorithm to Solve Problem \ref{prob:problem 2}} 
\begin{algorithmic}[1] 
\label{algo:dlADMM}
\REQUIRE $y$, $a_0=x$, $\rho$, $\nu$. 
\ENSURE $a_l(l=1,\cdots,L-1),W_l(l=1,\cdots,L),b_l(l=1,\cdots,L), z_l(l=1,\cdots,L)$. 
\STATE Initialize $k=0$.
\WHILE{$\textbf{W}^{k+1},\textbf{b}^{k+1},\textbf{z}^{k+1},\textbf{a}^{k+1}$ not converged}
\FOR{$l=L$ to $1$}
\IF{$l<L$}
\STATE Update $\overline{a}^{k+1}_l$ in Equation \eqref{eq:update overline a}.
\STATE Update $\overline{z}^{k+1}_l$ in Equation \eqref{eq:update overline z}.
\STATE Update $\overline{b}^{k+1}_l$ in Equation \eqref{eq:update overline b}.
\ELSE
\STATE{Update $\overline{z}^{k+1}_L$ in Equation \eqref{eq:update overline zl}}.
\STATE Update $\overline{b}^{k+1}_L$ in Equation \eqref{eq:update overline bl}.
\ENDIF
\STATE Update $\overline{W}^{k+1}_l$ in Equation \eqref{eq:update overline W}.
\ENDFOR
\FOR{$l=1$ to $L$}
\STATE Update ${W}^{k+1}_l$ in Equation \eqref{eq:update W}.
\IF{$l< L$}
\STATE Update $b^{k+1}_l$ in Equation \eqref{eq:update b}.
\STATE Update $z^{k+1}_l$ in Equation \eqref{eq:update z}.
\STATE Update $a^{k+1}_l$ in Equation \eqref{eq:update a}.
\ELSE
\STATE Update $b^{k+1}_l$ in Equation \eqref{eq:update bl}.
\STATE{ Update $z^{k+1}_L$ in Equation \eqref{eq:update zl}.}
\STATE{$r^{k+1}\leftarrow z_L^{k+1}-W^{k+1}_L a^{k+1}_{L-1}-b^{k+1}_L$}.
 \STATE{$u^{k+1}\leftarrow u^k+\rho  r^{k+1}$}.
\ENDIF
\ENDFOR
\STATE $k\leftarrow k+1$.
\ENDWHILE
\STATE Output $\textbf{W},\textbf{b},\textbf{z},\textbf{a}$.
\end{algorithmic}
\end{algorithm}
\\
\subsection{The Quadratic Approximation and Backtracking}
\label{sec:quadratic}
\indent The eight subproblems in Algorithm \ref{algo:dlADMM} are discussed in detail in this section. Most can be solved by quadratic approximation and the backtracking techniques described above, so the operation of the matrix inversion can be avoided.\\ \\
\textbf{1. Update $\overline{a}^{k+1}_l$}\\
\indent The variables $\overline{a}^{k+1}_l(l=1,\cdots,L-1)$ are updated as follows:
\begin{align*}
    \overline{a}^{k\!+\!1}_{l}&\leftarrow\! \arg\min\nolimits_{a_{l}}\! L_\rho(\overline{\textbf{W}}^{k\!+\!1}_{l\!+\!1},\overline{\textbf{b}}^{k\!+\!1}_{l\!+\!1},\overline{\textbf{z}}^{k\!+\!1}_{l\!+\!1},\{a^k_i\}_{i\!=\!1}^{l\!-\!1},a_l,\{\overline{a}^{k\!+\!1}_i\}_{i\!=\!l\!+\!1}^{L\!-\!1},u^k)
\end{align*}
\indent The subproblem is transformed into the following form after it is replaced by Equation \eqref{eq: Lagrangian}. 
\begin{align}
    \overline{a}^{k\!+\!1}_{l}\leftarrow\! \arg\!\min\nolimits_{a_{l}}\! \phi\!(\overline{\textbf{W}}^{k\!+\!1}_{l\!+\!1},\overline{\textbf{b}}^{k\!+\!1}_{l\!+\!1},\overline{\textbf{z}}^{k\!+\!1}_{l\!+\!1},\{a^k_i\}_{i\!=\!1}^{l\!-\!1}\!,a_l,\{\overline{a}^{k\!+\!1}_i\}_{i\!=\!l\!+\!1}^{L\!-\!1}\!,\!u^k)
    \label{eq:original overline a}
\end{align}
\indent Because $a_l$ and $W_{l+1}$ are coupled in $\phi(\bullet)$, in order to solve this problem, we must compute the inverse matrix of $\overline{W}^{k+1}_{l+1}$, which involves subiterations and is computationally expensive \cite{taylor2016training}. In order to handle this challenge,  we define $\overline{Q}_l(a_l;\overline{\tau}^{k+1}_l)$ as a quadratic approximation of $\phi$ at $a^{k}_{l}$, which is mathematically reformulated as follows:
\begin{align*}\overline{Q}_l(a_{l};\overline{\tau}^{k+1}_l)&=\phi(\overline{\textbf{W}}^{k+1}_{l+1},\overline{\textbf{b}}^{k+1}_{l+1},\overline{\textbf{z}}^{k+1}_{l+1},\overline{\textbf{a}}^{k+1}_{l+1},u^k)\\&+(\nabla _{a^k_{l}}\phi)^T(\overline{\textbf{W}}^{k+1}_{l+1},\overline{\textbf{b}}^{k+1}_{l+1},\overline{\textbf{z}}^{k+1}_{l+1},\overline{\textbf{a}}^{k+1}_{l+1},u^k)(a_{l}-a^k_{l})\\&+\Vert\overline{\tau}^{k+1}_l\circ (a_{l}-a^k_{l})^{\circ 2}\Vert_{1}/2
\end{align*}
where $\overline{\tau}^{k+1}_l>0$ is a  parameter vector, $\circ$ denotes the Hadamard product (the elementwise product), and $a^{\circ b}$  denotes $a$ to the Hadamard power of $b$ and $\Vert\bullet\Vert_{1}$ is the $\ell_{1}$ norm. $\nabla _{a^k_{l}}\phi$ is the gradient of $\overline{a}_l$ at $a^k_l$. Obviously, $\overline{Q}_l(a^k_l;\overline{\tau}^{k+1}_l)=\phi(\overline{\textbf{W}}^{k+1}_{l+1},\overline{\textbf{b}}^{k+1}_{l+1},\overline{\textbf{z}}^{k+1}_{l+1},\overline{\textbf{a}}^{k+1}_{l+1},u^k)$. Rather than minimizing the original problem in Equation \eqref{eq:original overline a}, we instead solve the following problem:
\begin{align} \overline{a}^{k+1}_l&\leftarrow\arg\min\nolimits_{a_l} \overline{Q}_l(a_l;\overline{\tau}^{k+1}_l) \label{eq:update overline a}
\end{align} Because $\overline{Q}_l(a_{l};\overline{\tau}^{k+1}_l)$ is a quadratic function with respect to $a_{l}$, the solution can be obtained by
\begin{align*}
\overline{a}^{k+1}_{l}\leftarrow a^k_{l}-\nabla_{a^k_{l}}\phi/\overline{\tau}^{k+1}_{l}
\end{align*}
 given a suitable $\overline{\tau}^{k+1}_l$. Now the main focus is how to choose $\overline{\tau}^{k+1}_l$. Algorithm  \ref{algo:overline tau update} shows the backtracking algorithm utilized to find a suitable $\overline{\tau}_l^{k+1}$. Lines 2-5 implement a while loop until  the condition $\phi(\overline{\textbf{W}}^{k+1}_{l+1},\overline{\textbf{b}}^{k+1}_{l+1},\overline{\textbf{z}}^{k+1}_{l+1},\overline{\textbf{a}}^{k+1}_l,u^k)\leq\overline{Q}_l(\overline{a}^{k+1}_l;\overline{\tau}^{k+1}_l)$ is satisfied. As $\overline{\tau}^{k+1}_l$ becomes larger and larger, $\overline{a}^{k+1}_l$ is close to $a^k_l$ and $a^k_l$ satisfies the loop condition, which precludes the possibility of the infinite loop. The time complexity of Algorithm \ref{algo:overline tau update} is $O(n^2)$, where $n$ is the number of features or neurons.
 \begin{algorithm} 
 \scriptsize
\caption{The Backtracking Algorithm  to update $\overline{a}^{k+1}_{l}$ } 
\begin{algorithmic}[1]
\label{algo:overline tau update}
\REQUIRE $\overline{\textbf{W}}^{k+1}_{l+1}$, $\overline{\textbf{b}}^{k+1}_{l+1}$, $\overline{\textbf{z}}^{k+1}_{l+1}$, $\overline{\textbf{a}}^{k+1}_{l+1}$, $u^k$, $\rho$, some constant $\overline{\eta}>1$. 
\ENSURE $\overline{\tau}^{k+1}_l$,$\overline{a}^{k+1}_{l}$. 
\STATE Pick up $\overline{t}$ and $\overline{\beta}=a^k_l-\nabla_{a^k_l}\phi/\overline{t}$
\WHILE{$\phi(\overline{\textbf{W}}^{k+1}_{l+1},\overline{\textbf{b}}^{k+1}_{l+1}\!,\!\overline{\textbf{z}}^{k+1}_{l+1},\{a^k_i\}_{i=1}^{l-1},\overline{\beta},\{\overline{a}^{k+1}_i\}_{i=l+1}^{L-1},u^k)>\overline{Q}_l(\overline{\beta};\overline{t})$}
\STATE $\overline{t}\leftarrow \overline{t}\overline{\eta}$.\\
\STATE $\overline{\beta}\leftarrow a^k_{l}-\nabla_{a^k_{l}}\phi/\overline{t}$.\\
\ENDWHILE
\STATE Output $\overline{\tau}^{k+1}_l \leftarrow \overline{t} $.\\
\STATE Output $\overline{a}^{k+1}_{l}\leftarrow \overline{\beta}$.
\end{algorithmic}
\end{algorithm}
\\
\textbf{2. Update $\overline{z}^{k+1}_l$}\\
\indent The variables $\overline{z}^{k+1}_l(l=1,\cdots,L)$ are updated as follows:
\begin{align*}
    \overline{z}^{k\!+\!1}_{l}&\leftarrow \arg\min\nolimits_{z_{l}} L_\rho(\overline{\textbf{W}}^{k\!+\!1}_{l\!+\!1},\overline{\textbf{b}}^{k\!+\!1}_{l\!+\!1},\{z^k_i\}_{i=1}^{l-1},z_l,\{\overline{z}^{k+1}_i\}_{i=l+1}^{L},\overline{\textbf{a}}^{k+1}_{l},u^k)
\end{align*}
which is equivalent to the following forms: for $\overline{z}^{k+1}_l(l=1,\cdots,L-1)$,
\begin{align}
          \overline{z}^{k\!+\!1}_{l}&\leftarrow\! \arg\min\nolimits_{z_{l}}\! \phi\!(\overline{\textbf{W}}^{k\!+\!1}_{l\!+\!1}\!,\!\overline{\textbf{b}}^{k\!+\!1}_{l\!+\!1}\!,\!\{z^k_i\}_{i\!=\!1}^{l\!-\!1}\!,\!z_l,\{\overline{z}^{k\!+\!1}_i\}_{i\!=\!l\!+\!1}^{L}\!,\overline{\textbf{a}}^{k\!+\!1}_{l}\!,\!u^k)\label{eq:update overline z}
          \end{align}
          and for $\overline{z}^{k+1}_L$,
          \begin{align}
    \overline{z}^{k+1}_L &\leftarrow\arg\min\nolimits_{z_L}     \! \phi\!(\textbf{W}^{k}\!,\!\o\textbf{b}^{k}\!,\!\{z^k_i\}_{i\!=\!1}^{L\!-\!1}\!,\!z_L,\textbf{a}^{k}\!,\!u^k)+R(z_L;y)\label{eq:update overline zl}
\end{align}
\indent Equation \eqref{eq:update overline z} is highly nonconvex because the nonlinear activation function $f(z_l)$ is contained in $\phi(\bullet)$. For common activation functions such as the Rectified linear unit (ReLU) and leaky ReLU, Equation \eqref{eq:update overline z} has a closed-form solution; for other activation functions like sigmoid and hyperbolic tangent (tanh), a look-up table is recommended \cite{taylor2016training}.\\ \indent Equation \eqref{eq:update overline zl} is a convex problem because $\phi(\bullet)$ and $R(\bullet)$ are convex with regard to $z_L$. Therefore, Equation \eqref{eq:update overline zl} can be solved by Fast Iterative Soft-Thresholding Algorithm (FISTA) \cite{beck2009fast}.\\
\textbf{3. Update $\overline{b}^{k+1}_l$}\\
\indent The variables $\overline{b}^{k+1}_l(l=1,\cdots,L)$ are updated as follows:
\begin{align*}
    \overline{b}^{k+1}_l\leftarrow \arg\min\nolimits_{b_l} 
     \!L_\rho(\overline{\textbf{W}}^{k+1}_{l+1},\{\!b^{k}_i\!\}_{i\!=\!1}^{l\!-\!1},b_l,\{\!\overline{b}^{k+1}_i\!\}_{i\!=\!l+1}^{L},\overline{\textbf{z}}^{k+1}_l,\overline{\textbf{a}}^{k+1}_l,u^k)
\end{align*}
which is equivalent to the following form:
\begin{align*}
        \overline{b}^{k+1}_l\leftarrow \arg\min\nolimits_{b_l} 
     \!\phi(\overline{\textbf{W}}^{k+1}_{l+1},\{\!b^{k}_i\!\}_{i\!=\!1}^{l\!-\!1}\!,b_l,\{\!\overline{b}^{k+1}_i\!\}_{i\!=\!l+1}^{L}\!,\!\overline{\textbf{z}}^{k+1}_l,\!\overline{\textbf{a}}^{k+1}_l,u^k)
\end{align*}
\indent Similarly to the update of $\overline{a}^{k+1}_l$, we define $\overline{U}_l(b_l;\overline{B})$ as a quadratic approximation of $\phi(\bullet)$ at ${b}^k_l$, which is formulated mathematically as follows \cite{beck2009fast}:
\begin{align*}
 \overline{U}_l(b_l;\overline{B})&=\phi(\overline{\textbf{W}}^{k+1}_{l+1},\overline{\textbf{b}}^{k+1}_{l+1},\overline{\textbf{z}}^{k+1}_l,\overline{\textbf{a}}^{k+1}_l,u^k)\\&+(\nabla_{b^k_l}\phi)^T(\overline{\textbf{W}}^{k+1}_{l+1},\overline{\textbf{b}}^{k+1}_{l+1},\overline{\textbf{z}}^{k+1}_l,\overline{\textbf{a}}^{k+1}_l,u^k)(b_l-b^k_l)\\&+(\overline{B}/2)\Vert b_l-b^k_l\Vert^2_2.
\end{align*}
where $\overline{B}>0$ is a parameter. Here $\overline{B}\geq \nu$ for $l=1,\cdots,L-1$ and $\overline{B}\geq \rho$ for $l=L$ are required for the convergence analysis  \cite{beck2009fast}. Without loss of generality, we set $\overline{B}=\nu$, and solve the subsequent subproblem as follows:
\begin{align}
    &\overline{b}^{k+1}_l\leftarrow \arg\min\nolimits_{b_l} \overline{U}_l(b_l;\nu)(l=1,\cdots,L-1) \label{eq:update overline b}\\
    &\overline{b}^{k+1}_L\leftarrow \arg\min\nolimits_{b_L} \overline{U}_L(b_L;\rho) \label{eq:update overline bl}
\end{align}
\indent Equation \eqref{eq:update overline b} is a convex problem and has a closed-form solution as follows:
\begin{align*}
    &\overline{b}^{k+1}_{l}\leftarrow b^k_{l}-\nabla_{b^k_{l}}\phi/\nu.(l=1,\cdots,L-1)\\&\overline{b}^{k+1}_{L}\leftarrow b^k_{L}-\nabla_{b^k_{L}}\phi/\rho.
\end{align*}
\textbf{4. Update $\overline{W}^{k+1}_l$}\\
\indent The variables $\overline{W}^{k+1}_l(l=1,\cdots,L)$ are updated as follows:
\begin{align*}
    \overline{W}^{k\!+\!1}_l\!\leftarrow\!\arg\!\min\nolimits_{W_l}\! L_\rho(\{{W}^{k}_i\}_{i\!=\!1}^{l\!-\!1}\!,W_l,\{\!\overline{W}^{k+1}_i\!\}_{i\!=\!l\!+\!1}^{L},\overline{\textbf{b}}^{k\!+\!1}_l,\overline{\textbf{z}}^{k\!+\!1}_l,\overline{\textbf{a}}^{k\!+\!1}_l,u^k)
\end{align*}
which is equivalent to the following form:
\begin{align}
 \nonumber&\overline{W}^{k\!+\!1}_l\!\leftarrow\!\arg\!\min\nolimits_{W_l}\! \phi(\{\!{W}^{k}_i\!\}_{i\!=\!1}^{l\!-\!1},W_l,\{\!\overline{W}^{k+1}_i\!\}_{i\!=\!l\!+\!1}^{L},\overline{\textbf{b}}^{k\!+\!1}_l,\overline{\textbf{z}}^{k\!+\!1}_l,\overline{\textbf{a}}^{k\!+\!1}_l,u^k)\\&+\!\Omega(W_l)
    \label{eq:update W overline original}
\end{align}
\indent Due to the same challenge in updating $\overline{a}^{k+1}_l$,  we define $\overline{P}_l(W_l;\overline{\theta}^{k+1}_l)$ as a quadratic approximation of $\phi$ at ${W}^{k}_{l}$. The quadratic approximation is mathematically reformulated as follows \cite{beck2009fast}:
\begin{align*}\overline{P}_l(W_l;\overline{\theta}^{k+1}_l)&=\phi(\overline{\textbf{W}}^{k+1}_{l+1},\overline{\textbf{b}}^{k+1}_l,\overline{\textbf{z}}^{k+1}_l,\overline{\textbf{a}}^{k+1}_l,u^k)\\&+(\nabla_{W^k_l}\phi)^T(\overline{\textbf{W}}^{k+1}_{l+1},\overline{\textbf{b}}^{k+1}_l,\overline{\textbf{z}}^{k+1}_l,\overline{\textbf{a}}^{k+1}_l,u^k)(W_l-W^k_l)\\&+\Vert\overline{\theta}^{k+1}_l\circ (W_l-W^k_l)^{\circ 2}\Vert_{1}/2
\end{align*}
where $\overline{\theta}^{k+1}_l>0$ is a  parameter vector, which is chosen by the Algorithm \ref{algo:overline theta update}. Instead of minimizing the Equation \eqref{eq:update W overline original}, we minimize the following:
\begin{align}
 &\overline{W}^{k+1}_l \leftarrow \arg\min\nolimits_{W_l} \overline{P}_l(W_l;\overline{\theta}^{k+1}_l)+\Omega_l (W_l)     \label{eq:update overline W}
\end{align}
\indent Equation \eqref{eq:update overline W} is convex and hence can be solved exactly. If $\Omega_l$ is either an $\ell_1$ or an $\ell_2$ regularization term, Equation \eqref{eq:update overline W} has a closed-form solution.
\begin{algorithm} 
\scriptsize
\caption{The Backtracking Algorithm  to update $\overline{W}^{k+1}_{l}$ }
\begin{algorithmic}[1]
\label{algo:overline theta update}
\REQUIRE $\overline{\textbf{W}}^{k+1}_{l+1}$,$\overline{\textbf{b}}^{k+1}_{l}$, $\overline{\textbf{z}}^{k+1}_{l}$,$\overline{\textbf{a}}^{k+1}_{l}$,$u^k$, $\rho$, some constant $\overline{\gamma}>1$. 
\ENSURE $\overline{\theta}^{k+1}_l$,$\overline{{W}}^{k+1}_{l}$. 
\STATE Pick up $\overline{\alpha}$ and $\overline{\zeta}=W^k_l-\nabla_{W^k_l}\phi/\overline{\alpha}$.
\WHILE{$\phi(\{W^k_i\}_{i=1}^{l-1},\overline{\zeta},\{\overline{W}^{k\!+\!1}_i\}_{i\!=\!l\!+\!1}^{L},\overline{\textbf{b}}^{k\!+\!1}_{l},\overline{\textbf{z}}^{k\!+\!1}_{l},\overline{\textbf{a}}^{k\!+\!1}_{l},u^k)>\overline{P}_l(\overline{\zeta};\overline{\alpha})$}
\STATE $\overline{\alpha}\leftarrow \overline{\alpha}\ \overline{\gamma}$.\\
\STATE Solve $\overline{\zeta}$ by Equation \eqref{eq:update overline W}.\\
\ENDWHILE
\STATE Output $\overline{\theta}^{k+1}_l \leftarrow \overline{\alpha} $.\\
\STATE Output $\overline{W}^{k+1}_{l}\leftarrow \overline{\zeta}$.
\end{algorithmic}
\end{algorithm}
\\
\textbf{5. Update ${W}^{k+1}_l$}\\
\indent The variables ${W}^{k+1}_l(l=1,\cdots,L)$ are updated as follows:
\begin{align*}
    W^{k\!+\!1}_l\!\leftarrow\!\arg\!\min\nolimits_{W_l} \!L_\rho\!(\{\!{W}^{k\!+\!1}_i\!\}_{i\!=\!1}^{l\!-\!1}\!,W_l,\{\!\overline{W}^{k\!+\!1}_i\!\}_{i\!=\!l\!+\!1}^{L}\!,\!\textbf{b}^{k\!+\!1}_{l\!-\!1},\textbf{z}^{k\!+\!1}_{l-1},\!\textbf{a}^{k\!+\!1}_{l\!-\!1}\!,\!u^k)
\end{align*}
which is equivalent to 
\begin{align}
 \nonumber&{W}^{k\!+\!1}_l\!\leftarrow\!\arg\!\min\nolimits_{W_l}\! \phi\!(\{\!{W}^{k\!+\!1}_i\!\}_{i\!=\!1}^{l\!-\!1}\!,W_l,\{\!\overline{W}^{k\!+\!1}_i\!\}_{i\!=\!l\!+\!1}^{L}\!,\!\textbf{b}^{k\!+\!1}_{l\!-\!1},\!\textbf{z}^{k\!+\!1}_{l\!-\!1},\!\textbf{a}^{k\!+\!1}_{l\!-\!1}\!,\!u^k)\\&+\!\Omega(W_l)
    \label{eq:update W original}
\end{align}
\indent Similarly,  we define $P_l(W_l;\theta^{k+1}_l)$ as a quadratic approximation of $\phi$ at $\overline{W}^{k+1}_{l}$. The quadratic approximation is then mathematically reformulated as follows \cite{beck2009fast}:
\begin{align*}{P}_l(W_l;{\theta}^{k+1}_l)&=\phi({\textbf{W}}^{k+1}_{l-1},{\textbf{b}}^{k+1}_{l-1},{\textbf{z}}^{k+1}_{l-1},{\textbf{a}}^{k+1}_{l-1},u^k)\\&+(\nabla_{\overline{W}^{k+1}_l}\phi)^T({\textbf{W}}^{k+1}_{l-1},\textbf{b}^{k+1}_{l-1},\textbf{z}^{k+1}_{l-1},{\textbf{a}}^{k+1}_{l-1},u^k)(W_l-\overline{W}^{k+1}_l)\\&+\Vert{\theta}^{k+1}_l\circ (W_l-\overline{W}^{k+1}_l)^{\circ 2}\Vert_{1}/2
\end{align*}
where $\theta^{k+1}_l>0$ is a  parameter vector. Instead of minimizing the Equation \eqref{eq:update W original}, we minimize the following:
\begin{align}
 &{W}^{k+1}_l \leftarrow \arg\min\nolimits_{W_l} P_l(W_l;{\theta}^{k+1}_l)+\Omega_l (W_l)     \label{eq:update W}
\end{align}
The choice of $\theta^{k+1}_l$ is discussed in the supplementary materials\footnote{\label{note} The supplementary materials are available at \url{http://mason.gmu.edu/~lzhao9/materials/papers/dlADMM_supp.pdf}}.\\
\textbf{6. Update ${b}^{k+1}_l$}\\
\indent The variables ${b}^{k+1}_l(l=1,\cdots,L)$ are updated as follows:
\begin{align*}
    {b}^{k+1}_l\leftarrow \arg\min\nolimits_{b_l} 
     \!L_\rho({\textbf{W}}^{k+1}_{l},\{\!b^{k+1}_i\!\}_{i\!=\!1}^{l\!-\!1},b_l,\{\!\overline{b}^{k+1}_i\!\}_{i\!=\!l+1}^{L},{\textbf{z}}^{k+1}_{l-1},{\textbf{a}}^{k+1}_{l-1},u^k)
\end{align*}
which is equivalent to the following formulation:
\begin{align*}
        {b}^{k+1}_l\leftarrow \arg\min\nolimits_{b_l} 
     \!\phi({\textbf{W}}^{k+1}_{l},\{\!b^{k+1}_i\!\}_{i\!=\!1}^{l\!-\!1},b_l,\{\!\overline{b}^{k+1}_i\!\}_{i\!=\!l+1}^{L},{\textbf{z}}^{k+1}_{l-1},{\textbf{a}}^{k+1}_{l-1},u^k)
\end{align*}
$U_l(b_l;B)$ is defined as the quadratic approximation of $\phi$ at $\overline{b}^{k+1}_l$ as follows:
\begin{align*}
 U_l(b_l;B)&=\phi(\textbf{W}^{k+1}_{l},\textbf{b}^{k+1}_{l-1},{\textbf{z}}^{k+1}_{l-1},{\textbf{a}}^{k+1}_{l-1},u^k)\\&+\nabla_{\overline{b}^{k+1}_l}\phi^T({\textbf{W}}^{k+1}_{l},{\textbf{b}}^{k+1}_{l-1},{\textbf{z}}^{k+1}_{l-1},{\textbf{a}}^{k+1}_{l-1},u^k)(b_l-\overline{b}^{k+1}_l)\\&+(B/2)\Vert b_l-\overline{b}^{k+1}_l\Vert^2_2.
\end{align*}
where $B> 0$ is a parameter. We set $B=\nu$ for $l=1,\cdots,L-1$ and $B=\rho$ for $l=L$, and solve the resulting subproblems as follows:
\begin{align}
    &{b}^{k+1}_l\leftarrow \arg\min\nolimits_{b_l} {U}_l(b_l;\nu)(l=1,\cdots,L-1) \label{eq:update b}
    \\&{b}^{k+1}_L\leftarrow \arg\min\nolimits_{b_L} {U}_L(b_L;\rho) \label{eq:update bl}
\end{align}.
\indent The solutions to Equations \eqref{eq:update b} and \eqref{eq:update bl} are as follows:
\begin{align*}
    &b^{k+1}_l\leftarrow \overline{b}^{k+1}_l-\nabla_{\overline{b}^{k+1}_L}\phi/\nu (l=1,\cdots,L-1)
    \\&b^{k+1}_L\leftarrow \overline{b}^{k+1}_L-\nabla_{\overline{b}^{k+1}_L}\phi/\rho
\end{align*}
\textbf{7. Update ${z}^{k+1}_l$}\\
\indent The variables ${z}^{k+1}_l(l=1,\cdots,L)$ are updated as follows:
\begin{align*}
    {z}^{k\!+\!1}_{l}&\leftarrow\! \arg\min\nolimits_{z_{l}}\! L_\rho({\textbf{W}}^{k\!+\!1}_{l},{\textbf{b}}^{k\!+\!1}_{l},\{z^{k\!+\!1}_i\}_{i\!=\!1}^{l\!-\!1},z_l,\{\overline{z}^{k\!+\!1}_i\}_{i\!=\!l\!+\!1}^{L},\textbf{a}^{k\!+\!1}_{l\!-\!1},u^k)
\end{align*}
which is equivalent to the following forms for $z_l(l=1,\cdots,L-1)$:
\begin{align}
    {z}^{k\!+\!1}_{l}&\leftarrow\! \arg\min\nolimits_{z_{l}}\! \phi\!({\textbf{W}}^{k\!+\!1}_{l}\!,\!{\textbf{b}}^{k\!+\!1}_{l}\!,\!\{z^{k\!+\!1}_i\}_{i\!=\!1}^{l\!-\!1}\!,\!z_l,\{\overline{z}^{k\!+\!1}_i\}_{i\!=\!l\!+\!1}^{L\!-\!1}\!,\!\textbf{a}^{k\!+\!1}_{l-1}\!,\!u^k)
    \label{eq:update z}
    \end{align}
    and for $z_L$:
\begin{align}
\nonumber
    {z}^{k+1}_L &\leftarrow\arg\min\nolimits_{z_L}      \phi(\textbf{W}^{k+1}_{L},\textbf{b}^{k+1}_{L},\{z^{k+1}_i\}_{i=1}^{L-1},z_L,\textbf{a}^{k}_{L-1},u^k)\\&+R(z_L;y)\label{eq:update zl}
\end{align}
Solving Equations \eqref{eq:update z} and \eqref{eq:update zl} proceeds exactly the same as solving Equations \eqref{eq:update overline z} and \eqref{eq:update overline zl}, respectively.\\
\textbf{8. Update ${a}^{k+1}_l$}\\
\indent The variables ${a}^{k+1}_l(l=1,\cdots,L-1)$ are updated as follows:
\begin{align*}
    {a}^{k+1}_{l}&\leftarrow \arg\min\nolimits_{a_{l}}\! L_\rho({\textbf{W}}^{k\!+\!1}_{l},{\textbf{b}}^{k+1}_{l},{\textbf{z}}^{k\!+\!1}_{l},\{a^k_i\}_{i=1}^{l\!-\!1},a_l,\{\overline{a}^{k\!+\!1}_i\}_{i\!=\!l\!+\!1}^{L\!-\!1},u^k)
\end{align*}
\indent which is equivalent to the following form:
\begin{align*}
{a}^{k\!+\!1}_{l}&\leftarrow\! \arg\min\nolimits_{a_{l}}\! \phi({\textbf{W}}^{k\!+\!1}_{l},{\textbf{b}}^{k\!+\!1}_{l},{\textbf{z}}^{k\!+\!1}_{l},\{a^k_i\}_{i\!=\!1}^{l\!-\!1},a_l,\{\overline{a}^{k\!+\!1}_i\}_{i\!=\!l\!+\!1}^{L\!-\!1},u^k)
\end{align*}
$Q_l(a_l;\tau^{k+1}_l)$ is defined as the quadratic approximation of $\phi$ at $a^{k+1}_l$ as follows:
\begin{align*}{Q}_l(a_{l};{\tau}^{k+1}_l)&=\phi({\textbf{W}}^{k+1}_{l},{\textbf{b}}^{k+1}_{l},{\textbf{z}}^{k+1}_{l},{\textbf{a}}^{k+1}_{l-1},u^k)\\&+(\nabla _{\overline{a}^{k+1}_{l}}\phi)^T({\textbf{W}}^{k+1}_{l},{\textbf{b}}^{k+1}_{l},{\textbf{z}}^{k+1}_{l},{\textbf{a}}^{k+1}_{l-1},u^k)(a_{l}-\overline{a}^{k+1}_{l})\\&+\Vert{\tau}^{k+1}_l\circ (a_{l}-\overline{a}^{k+1}_{l})^{\circ 2}\Vert_{1}/2
\end{align*}
and we can solve the following problem instead:
\begin{align} a^{k+1}_l&\leftarrow\arg\min\nolimits_{a_l} {Q}_l(a_l;{\tau}^{k+1}_l) \label{eq:update a}
\end{align}
where ${\tau}^{k+1}_l>0$ is a  parameter vector. The solution to Equation \eqref{eq:update a} can be obtained by
\begin{align*}
{a}^{k+1}_{l}\leftarrow \overline{a}^{k+1}_{l}-\nabla_{\overline{a}^{k+1}_{l}}\phi/{\tau}^{k+1}_{l}
\end{align*}
To choice of an appropriate  $\tau^{k+1}_l$ is shown  in the supplementary materials\cref{note}.
\section{Convergence Analysis}
\label{sec:convergence}
\indent In this section, the theoretical convergence of the proposed dlADMM algorithm is analyzed. Before we formally present the convergence results of the dlADMM algorithms, Section \ref{sec:assumption} presents necessary assumptions to guarantee the global convgerence of dlADMM. In Section \ref{sec:key properties}, we prove the global convergence of the dlADMM algorithm.
\subsection{Assumptions}
\label{sec:assumption}
\begin{assumption}[Closed-form Solution] There exist activation functions $a_l=f_l(z_l)$ such that Equations \eqref{eq:update overline z} and \eqref{eq:update z} have closed form solutions  $\overline{z}^{k\!+\!1}_l\!=\!\overline{h}(\overline{\textbf{W}}^{k\!+\!1}_{l
+1},\overline{\textbf{b}}^{k\!+\!1}_{l\!+\!1},\overline{\textbf{a}}^{k\!+\!1}_{l})$ and $z^{k\!+\!1}_l\!=\!h(\textbf{W}^{k\!+\!1}_l,\textbf{b}^{k\!+\!1}_l,\textbf{a}^{k\!+\!1}_{l\!-\!1})$, respectively, where $\overline{h}(\bullet)$ and $h(\bullet)$ are continuous functions. \label{ass:assumption 1}
\end{assumption}
\indent This assumption can be satisfied by commonly used activation functions such as ReLU and leaky ReLU. For example, for the ReLU function $a_l=\max(z_l,0)$, Equation \eqref{eq:update z} has the following solution: 
\begin{align*}
    z^{k+1}_l=
    \begin{cases} \min(W^{k+1}_{l}a^{k+1}_{l-1}+b^{k+1}_l,0)&z^{k+1}_l\leq0\\
    \max((W^{k+1}_{l}a^{k+1}_{l-1}+b^{k+1}_l+\overline{a}^{k+1}_l)/2,0)&z^{k+1}_l\geq0\\
    \end{cases}
\end{align*}
\begin{assumption}[Objective Function] $F (\textbf{W}, \textbf{b},\textbf{z}, \textbf{a})$ is coercive over the nonempty set $G = \{(\textbf{W}, \textbf{b},\textbf{z}, \textbf{a}) : z_L - W_La_{L-1} -
b_L = 0\}$. In other words, $F (\textbf{W}, \textbf{b},\textbf{z}, \textbf{a}) \rightarrow \infty$ if $(\textbf{W}, \textbf{b},\textbf{z}, \textbf{a})\in G$ and
$\Vert(\textbf{W}, \textbf{b},\textbf{z},\textbf{ a})\Vert \rightarrow \infty$. Moreover, $R(z_l;y)$ is Lipschitz differentiable with Lipschitz constant $H\geq 0$.\label{ass:assumption 2}
\end{assumption}
The Assumption \ref{ass:assumption 2} is mild enough for most common loss functions to satisfy. For example, the cross-entropy and square loss are Lipschitz differentiable.
\subsection{Key Properties}
\label{sec:key properties}
\indent We present the main convergence result of the proposed dlADMM algorithm in this section. Specifically, as long as Assumptions \ref{ass:assumption 1}-\ref{ass:assumption 2} hold, then Properties \ref{pro:property 1}-\ref{pro:property 3} are satisfied, which are important to prove the global convergence of the proposed dlADMM algorithm. The proof details are included in the supplementary materials\cref{note}.
\begin{property}[Boundness]
\label{pro:property 1}
If $\rho > 2H$, then $\{\textbf{W}^k,\textbf{b}^k
,\textbf{z}^k, \textbf{a}^k,u^k\}$ is bounded, and $L_\rho(\textbf{W}^k,\textbf{b}^k
,\textbf{z}^k, \textbf{a}^k,u^k)$ is lower bounded.
\end{property}
Property \ref{pro:property 1} concludes that all variables and the value of $L_\rho$ have
lower bounds. It is proven under Assumptions \ref{ass:assumption 1} and \ref{ass:assumption 2}, and its proof
can be found in the supplementary materials\cref{note}.
\begin{property}[Sufficient Descent]
\label{pro:property 2}
If $\rho>2H$ so that $C_1=\rho/2-H/2-H^2/\rho>0$, then there exists \\$C_2\!=\!\min(\!\nu/2,C_1,\{\overline{\theta}^{k+1}_l\}_{l=1}^L\!,\!\{{\theta}^{k\!+\!1}_l\}_{l\!=\!1}^L,\{\overline{\tau}^{k\!+\!1}_l\}_{l\!=\!1}^{L\!-\!1},\{\tau^{k\!+\!1}_l\}_{l\!=\!1}^{L\!-\!1})$ such that
\begin{align}
\nonumber &L_\rho(\textbf{W}^k,\textbf{b}^k,\textbf{z}^k,\textbf{a}^k,u^k)-L_\rho(\textbf{W}^{k+1},\textbf{b}^{k+1},\textbf{z}^{k+1},\textbf{a}^{k+1},u^{k+1})\\\nonumber &\geq C_2(\sum\nolimits_{l=1}^L(\Vert \overline{W}_l^{k+1}-W_l^k\Vert^2_2+\Vert {W}_l^{k+1}-\overline{W}_l^{k+1}\Vert^2_2\\\nonumber&+\Vert \overline{b}_l^{k+1}-b_l^k\Vert^2_2+\Vert {b}_l^{k+1}-\overline{b}_l^{k+1}\Vert^2_2)\\&\nonumber +\sum\nolimits_{l=1}^{L-1}(\Vert \overline{a}_l^{k+1}-a_l^k\Vert^2_2+\Vert {a}_l^{k+1}-\overline{a}_l^{k+1}\Vert^2_2)\\&+\Vert \overline{z}^{k+1}_L-{z}^{k}_L\Vert^2_2+\Vert z^{k+1}_L-\overline{z}^{k+1}_L\Vert^2_2)\label{eq: property2} 
\end{align}
\end{property}
\indent Property \ref{pro:property 2} depicts the  monotonic decrease of the objective value during iterations. The proof of Property \ref{pro:property 2} is detailed in the supplementary materials\cref{note}. 
\begin{property}[Subgradient Bound]
\label{pro:property 3}
There exist a constant $C>0$ and $g\in \partial L_\rho(\textbf{W}^{k+1},\textbf{b}^{k+1},\textbf{z}^{k+1},\textbf{a}^{k+1})$ such that
\begin{align}
    \nonumber\Vert g\Vert &\leq C(\Vert \textbf{W}^{k+1}-\overline{\textbf{W}}^{k+1}\Vert+\Vert\textbf{b}^{k+1}-\overline{\textbf{b}}^{k+1}\Vert\\&+\Vert\textbf{z}^{k+1}-\overline{\textbf{z}}^{k+1}\Vert+\Vert\textbf{a}^{k+1}-\overline{\textbf{a}}^{k+1}\Vert+\Vert \textbf{z}^{k+1}-\textbf{z}^k\Vert)\label{eq: property3}
\end{align}
\end{property}
\indent Property \ref{pro:property 3} ensures that the subgradient of the objective function is bounded by variables. The proof of Property \ref{pro:property 3} requires Property \ref{pro:property 1} and the proof is elaborated in the supplementary materials\cref{note}. Now the global convergence of the dlADMM algorithm is presented. The following theorem states that Properties \ref{pro:property 1}-\ref{pro:property 3} are guaranteed.
\begin{theorem}
\label{thero: theorem 1}
For any $\rho>2H$, if Assumptions \ref{ass:assumption 1} and \ref{ass:assumption 2} are satisfied, then Properties \ref{pro:property 1}-\ref{pro:property 3} hold. 
\end{theorem}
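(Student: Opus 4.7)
The plan is to verify Properties~\ref{pro:property 1}--\ref{pro:property 3} in order, following the classical nonconvex-ADMM recipe of dual-to-primal control, block-wise sufficient descent, and KKT-based subgradient bounds. The single identity that drives everything is the optimality condition of the $z_L$-subproblems \eqref{eq:update overline zl} and \eqref{eq:update zl}: combined with the dual update $u^{k+1}=u^k+\rho r^{k+1}$, this yields $u^{k+1}=-\nabla R(z_L^{k+1};y)$, so Assumption~\ref{ass:assumption 2} gives $\|u^{k+1}-u^k\|\le H\|z_L^{k+1}-z_L^k\|$ and $\|u^{k+1}\|\le\|\nabla R(z_L^{k+1};y)\|$. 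This dual identity is the linchpin because it is the only way to control the dual variable when the only equality constraint is $z_L=W_La_{L-1}+b_L$.

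For Property~\ref{pro:property 1}, I would rewrite $L_\rho^{k+1} = F(\textbf{W}^{k+1},\textbf{b}^{k+1},\textbf{z}^{k+1},\textbf{a}^{k+1}) + \langle u^{k+1}, r^{k+1}\rangle + (\rho/2)\|r^{k+1}\|^2$, replace the dual inner product using the identity above, and apply Young's inequality to obtain a lower bound of the form $F - (H/\rho)\cdot(\text{bounded})$. Since $F$ is coercive on the feasible set $G$ by Assumption~\ref{ass:assumption 2}, this implies $L_\rho$ is lower bounded. The boundedness of the primal sequence then follows by combining this lower bound with the monotone decrease in Property~\ref{pro:property 2}: the iterates must lie in a sublevel set of $F$, which is compact by coercivity. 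Boundedness of $u^k$ then falls out from the dual identity and continuity of $\nabla R$ on a bounded set.

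For Property~\ref{pro:property 2}, I would decompose $L_\rho^k-L_\rho^{k+1}$ across the $8L$ block updates. Each quadratic-approximation update $\arg\min Q(\cdot;\tau)$ exits its backtracking loop exactly when the approximation dominates $\phi$, which by the standard descent lemma yields a margin of at least $(\tau/2)\|\Delta\|^2$; the $b_l$- and $z_l$-updates for $l<L$ provide analogous $(\nu/2)$-margins from the $\nu$-strong convexity of $\phi$ in those blocks. The main obstacle is the $(z_L,u)$ block: although the $z_L$-update alone produces a $(\rho/2)\|z_L^{k+1}-z_L^k\|^2$ descent from $\rho$-strong convexity of $\phi$ in $z_L$, the dual step \emph{increases} $L_\rho$ by $(1/\rho)\|u^{k+1}-u^k\|^2\le (H^2/\rho)\|z_L^{k+1}-z_L^k\|^2$, and a further $(H/2)\|z_L^{k+1}-z_L^k\|^2$ must be paid when converting $\nabla R(z_L^{k+1})$ to $\nabla R(z_L^k)$ via Lipschitzness. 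The surviving net descent $(\rho/2-H/2-H^2/\rho)\|z_L^{k+1}-z_L^k\|^2 = C_1\|z_L^{k+1}-z_L^k\|^2$ is positive precisely when $\rho>2H$, which is exactly where that hypothesis is consumed and where the constant $C_1$ originates.

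For Property~\ref{pro:property 3}, I would write the first-order optimality conditions of every subproblem to assemble one element $g\in\partial L_\rho(\textbf{W}^{k+1},\textbf{b}^{k+1},\textbf{z}^{k+1},\textbf{a}^{k+1})$. For example, the $W_l^{k+1}$-update yields $\nabla_{\overline{W}_l^{k+1}}\phi(\textbf{W}_{l-1}^{k+1},\dots) + \theta_l^{k+1}\circ(W_l^{k+1}-\overline{W}_l^{k+1}) + g_{\Omega_l}=0$ for some $g_{\Omega_l}\in\partial\Omega_l(W_l^{k+1})$; adding and subtracting $\nabla_{W_l^{k+1}}\phi$ evaluated at the true iterate exposes a $W_l$-component of $\partial L_\rho$ bounded by $\|W_l^{k+1}-\overline{W}_l^{k+1}\|$ together with cross-block differences, after invoking Lipschitz continuity of $\nabla\phi$ on the bounded iterate set guaranteed by Property~\ref{pro:property 1}. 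The analogous calculation for the other blocks produces the $\|\textbf{b}^{k+1}-\overline{\textbf{b}}^{k+1}\|$, $\|\textbf{z}^{k+1}-\overline{\textbf{z}}^{k+1}\|$, $\|\textbf{a}^{k+1}-\overline{\textbf{a}}^{k+1}\|$ terms, and the $(z_L,u)$ block contributes the $\|\textbf{z}^{k+1}-\textbf{z}^k\|$ term through the dual identity once more. Packaging all local Lipschitz constants into a single $C>0$ gives \eqref{eq: property3}. The subtle technical point throughout is that these Lipschitz constants are only finite because Property~\ref{pro:property 1} keeps the iterates inside a compact set, so the three properties must be set up in this order even though Property~\ref{pro:property 1} formally uses Property~\ref{pro:property 2} in its final step.
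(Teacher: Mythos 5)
Your proposal follows essentially the same route as the paper: the dual identity $u^k=-\nabla_{z_L^k}R(z_L^k;y)$ as the linchpin, coercivity plus the monotone decrease for lower-boundedness and boundedness of the iterates, block-wise descent with the $(z_L,u)$ block yielding $C_1=\rho/2-H/2-H^2/\rho$ (whence $\rho>2H$), and optimality-condition-based subgradient bounds over the bounded iterate set. The only cosmetic difference is that the paper's backward--forward structure splits the $z_L$ contribution between $\Vert \overline{z}_L^{k+1}-z_L^k\Vert^2$ and $\Vert z_L^{k+1}-\overline{z}_L^{k+1}\Vert^2$ via the triangle inequality, absorbing the leftover $-(H^2/\rho)\Vert\overline{z}_L^{k+1}-z_L^k\Vert^2$ into the backward $\overline{z}_L$ descent; your one-step description glosses over this but it does not change the argument.
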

\begin{proof}
This theorem can be concluded by the proofs in the supplementary materials\cref{note}.
\end{proof}
\indent The next theorem presents the global convergence of the dlADMM algorithm.
\begin{theorem} [Global Convergence]
\label{thero: theorem 2}
If $\rho>2H$, then for the variables $(\textbf{W},\textbf{b},\textbf{z},\textbf{a},u)$ in Problem \ref{prob:problem 2}, starting from any $(\textbf{W}^0,\textbf{b}^0,\textbf{z}^0,\textbf{a}^0,u^0)$, it has at least a limit point $(\textbf{W}^*,\textbf{b}^*,\textbf{z}^*,\textbf{a}^*,u^*)$, and any limit point $(\textbf{W}^*,\textbf{b}^*,\textbf{z}^*,\textbf{a}^*,u^*)$ is a critical point of Problem \ref{prob:problem 2}. That is, $0\in \partial L_\rho(\textbf{W}^*,\textbf{b}^*,\textbf{z}^*,\textbf{a}^*,u^*)$. Or equivalently, 
\begin{align*}
    & z^*_L=W^*_La^*_{L-1}+b^*_L\\
    & 0\in\partial_{\textbf{W}^*} L_\rho(\textbf{W}^*,\textbf{b}^*,\textbf{z}^*,\textbf{a}^*,u^*)
    & \nabla_{\textbf{b}^*} L_\rho(\textbf{W}^*,\textbf{b}^*,\textbf{z}^*,\textbf{a}^*,u^*)=0\\
    & 0\in\partial_{\textbf{z}^*} L_\rho(\textbf{W}^*,\textbf{b}^*,\textbf{z}^*,\textbf{a}^*,u^*)
    & \nabla_{\textbf{a}^*} L_\rho(\textbf{W}^*,\textbf{b}^*,\textbf{z}^*,\textbf{a}^*,u^*)=0
\end{align*}
\end{theorem}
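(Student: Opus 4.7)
The plan is to chain Properties 1--3 together in a standard argument for global convergence of ADMM-type schemes on nonconvex problems. First I would invoke Property 1: since $\{\textbf{W}^k,\textbf{b}^k,\textbf{z}^k,\textbf{a}^k,u^k\}$ is bounded, the Bolzano--Weierstrass theorem guarantees at least one convergent subsequence, so a limit point $(\textbf{W}^*,\textbf{b}^*,\textbf{z}^*,\textbf{a}^*,u^*)$ exists. Combining the lower bound on $L_\rho$ from Property 1 with the sufficient descent in Property 2, the sequence $\{L_\rho(\textbf{W}^k,\textbf{b}^k,\textbf{z}^k,\textbf{a}^k,u^k)\}$ is monotone nonincreasing and bounded below, hence convergent. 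Telescoping the descent inequality \eqref{eq: property2} across $k$ gives that the sum of all the squared norms on the right-hand side is finite, so each of the differences
\[
\Vert \overline{W}_l^{k+1}-W_l^k\Vert,\ \Vert W_l^{k+1}-\overline{W}_l^{k+1}\Vert,\ \Vert \overline{b}_l^{k+1}-b_l^k\Vert,\ \Vert b_l^{k+1}-\overline{b}_l^{k+1}\Vert,
\]
and the analogous quantities for $\textbf{a}$ and $z_L$, converge to $0$.

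Next I would use these vanishing differences in two ways. On the one hand, along the convergent subsequence, $\overline{\textbf{W}}^{k+1},\overline{\textbf{b}}^{k+1},\overline{\textbf{z}}^{k+1},\overline{\textbf{a}}^{k+1}$ must share the same limit as $\textbf{W}^{k+1},\textbf{b}^{k+1},\textbf{z}^{k+1},\textbf{a}^{k+1}$; moreover $\textbf{z}^{k+1}-\textbf{z}^k\to 0$ since its $L$-th component vanishes by the above and its other components are governed by Assumption \ref{ass:assumption 1} together with the convergence of $\textbf{W},\textbf{b},\textbf{a}$ through the continuous map $h(\bullet)$. On the other hand, substituting into Property 3 gives $\Vert g^{k+1}\Vert \to 0$ for some $g^{k+1}\in\partial L_\rho(\textbf{W}^{k+1},\textbf{b}^{k+1},\textbf{z}^{k+1},\textbf{a}^{k+1})$. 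Since $L_\rho$ is proper and lower semicontinuous, the limiting (Mordukhovich) subdifferential is closed in the sense that if $(x^k,g^k)\to(x^*,g^*)$ with $g^k\in\partial L_\rho(x^k)$ and $L_\rho(x^k)\to L_\rho(x^*)$, then $g^*\in\partial L_\rho(x^*)$. Applying this along the convergent subsequence with $g^*=0$ yields $0\in\partial L_\rho(\textbf{W}^*,\textbf{b}^*,\textbf{z}^*,\textbf{a}^*,u^*)$.

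Finally, the primal feasibility $z_L^*=W_L^*a_{L-1}^*+b_L^*$ follows from the dual update $u^{k+1}=u^k+\rho r^{k+1}$: since $\{u^k\}$ is bounded by Property 1, the relation $r^{k+1}=(u^{k+1}-u^k)/\rho$ together with the fact (derivable from the telescoped descent) that $u^{k+1}-u^k\to 0$ forces $r^{k+1}\to 0$, i.e., the limit satisfies the linear constraint. Reading off the block components of $0\in\partial L_\rho$ at the limit point then produces the four stationarity equations in the statement.

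The main technical obstacle is the closedness step: $L_\rho$ is nonconvex and nonsmooth (through $\Omega_l$, $R$, and the $f_l$-coupling term in $\phi$), so one cannot rely on convex subdifferential calculus. The care needed is to verify that Properties 1--3 supply exactly the three ingredients the closedness-of-limiting-subdifferential theorem requires, namely convergence of iterates along a subsequence, continuity/convergence of $L_\rho$-values along that subsequence (which follows because $L_\rho$ itself converges by the monotone argument and the relevant nonsmooth pieces are lower semicontinuous), and a subgradient sequence converging to zero. Once those three are in place, the passage $0\in\partial L_\rho(\cdot^*)$ is automatic, and the remaining verifications are bookkeeping on block coordinates.
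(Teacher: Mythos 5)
Your proposal is correct and follows essentially the same route as the paper's proof: Bolzano--Weierstrass via Property 1, monotone convergence of $L_\rho$ plus telescoping of Property 2 to kill the successive differences, Assumption 1 to handle the $\textbf{z}$-blocks, Property 3 to produce a vanishing subgradient sequence, and closedness of the (general/limiting) subdifferential to conclude $0\in\partial L_\rho$ at the limit point. If anything, you are more careful than the paper, which simply cites the definition of the general subgradient without flagging the $L_\rho(x^k)\to L_\rho(x^*)$ requirement or spelling out the primal-feasibility bookkeeping.
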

\begin{proof}
Because $(\textbf{W}^k,\textbf{b}^k,\textbf{z}^k,\textbf{a}^k,u^k)$  is bounded, there exists a subsequence  $(\textbf{W}^s,\textbf{b}^s,\textbf{z}^s,\textbf{a}^s,\textbf{u}^s)$ such that $(\textbf{W}^s,\textbf{b}^s,\textbf{z}^s,\textbf{a}^s,u^s)\rightarrow (\textbf{W}^*,\textbf{b}^*,\textbf{z}^*,\textbf{a}^*,u^*)$ where $(\textbf{W}^*,\textbf{b}^*,\textbf{z}^*,\textbf{a}^*,u^*)$ is a limit point.
By Properties \ref{pro:property 1} and \ref{pro:property 2}, $L_\rho(\textbf{W}^k,\textbf{b}^k,\textbf{z}^k,\textbf{a}^k,u^k)$ is non-increasing and  lower bounded and hence converges.  By Property \ref{pro:property 2}, we prove that $\Vert \overline{\textbf{W}}^{k+1}-\textbf{W}^{k}\Vert\rightarrow 0$, $\Vert \overline{\textbf{b}}^{k+1}-\textbf{b}^{k}\Vert\rightarrow 0$, $\Vert \overline{\textbf{a}}^{k+1}-\textbf{a}^{k}\Vert\rightarrow 0$, $\Vert \textbf{W}^{k+1}-\overline{\textbf{W}}^{k+1}\Vert\rightarrow 0$, $\Vert \textbf{b}^{k+1}-\overline{\textbf{b}}^{k+1}\Vert\rightarrow 0$, and $\Vert \textbf{a}^{k+1}-\overline{\textbf{a}}^{k+1}\Vert\rightarrow 0$, as $k\rightarrow \infty$ . Therefore $\Vert \textbf{W}^{k+1}-{\textbf{W}}^{k}\Vert\rightarrow 0$, $\Vert \textbf{b}^{k+1}-{\textbf{b}}^{k}\Vert\rightarrow 0$, and $\Vert \textbf{a}^{k+1}-{\textbf{a}}^{k}\Vert\rightarrow 0$, as $k\rightarrow \infty$. Moreover, from Assumption \ref{ass:assumption 1}, we know that $\overline{\textbf{z}}^{k+1}\rightarrow \textbf{z}^{k}$ and $\textbf{z}^{k+1}\rightarrow \overline{\textbf{z}}^{k+1}$  as $k\rightarrow \infty$. Therefore, $\textbf{z}^{k+1}\rightarrow \textbf{z}^{k}$.  We infer there exists $g^k\in \partial L_\rho(\textbf{W}^k,\textbf{b}^k,\textbf{z}^k,\textbf{a}^k,u^k)$ such that $\Vert g^k\Vert \rightarrow 0$ as $k\rightarrow \infty$ based on Property \ref{pro:property 3}. Specifically, $\Vert g^s\Vert \rightarrow 0$ as $s\rightarrow \infty$. According to the definition of general subgradient (Defintion 8.3 in \cite{rockafellar2009variational}), we have $0\in \partial L_\rho(\textbf{W}^*,\textbf{b}^*,\textbf{z}^*,\textbf{a}^*,u^*)$. In other words, the limit point $(\textbf{W}^*,\textbf{b}^*,\textbf{z}^*,\textbf{a}^*,u^*)$ is a critical point of $L_\rho$ defined in Equation \eqref{eq: Lagrangian}.
\end{proof}
\indent Theorem \ref{thero: theorem 2} shows that our dlADMM algorithm converges globally for sufficiently large $\rho$, which is consistent with previous literature \cite{wang2015global,kiaee2016alternating}. The next theorem shows that the dlADMM converges globally with a sublinear convergence rate $o(1/k)$.
\begin{theorem}[Convergence Rate]
For a sequence\\ $(\textbf{W}^k,\textbf{b}^k,\textbf{z}^k,\textbf{a}^k,u^k)$,  define $c_k=\min\nolimits_{0\leq i\leq k}(\sum\nolimits_{l=1}^L(\Vert \overline{W}_l^{i+1}-W_l^i\Vert^2_2+\Vert {W}_l^{i+1}-\overline{W}_l^{i+1}\Vert^2_2+\Vert \overline{b}_l^{i+1}-b_l^i\Vert^2_2+\Vert {b}_l^{i+1}-\overline{b}_l^{i+1}\Vert^2_2) +\sum\nolimits_{l=1}^{L-1}(\Vert \overline{a}_l^{i+1}-a_l^i\Vert^2_2+\Vert {a}_l^{i+1}-\overline{a}_l^{i+1}\Vert^2_2)+\Vert \overline{z}^{i+1}_L-{z}^{i}_L\Vert^2_2+\Vert z^{i+1}_L-\overline{z}^{i+1}_L\Vert^2_2)$, then the convergence rate of $c_k$ is $o(1/k)$.
\label{thero: theorem 3}
\end{theorem}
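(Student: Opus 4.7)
The plan is to combine the sufficient descent property (Property \ref{pro:property 2}) with the lower bound from Property \ref{pro:property 1}, and then convert the resulting summability into the desired $o(1/k)$ rate via a standard tail-argument on the minimum.

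First I would denote, for each iteration index $i$, the right-hand side of \eqref{eq: property2} divided by $C_2$ as $s_{i+1}$, i.e.\ the bracketed sum of squared successive differences; by definition $c_k = \min_{0\le i\le k} s_{i+1}$. Property \ref{pro:property 2} then reads
\begin{equation*}
L_\rho(\textbf{W}^{i},\textbf{b}^{i},\textbf{z}^{i},\textbf{a}^{i},u^{i}) - L_\rho(\textbf{W}^{i+1},\textbf{b}^{i+1},\textbf{z}^{i+1},\textbf{a}^{i+1},u^{i+1}) \;\ge\; C_2\, s_{i+1}.
\end{equation*}
Telescoping this inequality from $i=0$ to $i=k-1$ gives
\begin{equation*}
C_2 \sum\nolimits_{i=0}^{k-1} s_{i+1} \;\le\; L_\rho(\textbf{W}^{0},\textbf{b}^{0},\textbf{z}^{0},\textbf{a}^{0},u^{0}) - L_\rho(\textbf{W}^{k},\textbf{b}^{k},\textbf{z}^{k},\textbf{a}^{k},u^{k}).
\end{equation*}
Property \ref{pro:property 1} guarantees that $L_\rho$ is bounded below along the iterates, and Property \ref{pro:property 2} makes it non-increasing, so the right-hand side is bounded by a finite constant $M$ uniformly in $k$. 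Letting $k\to\infty$, I obtain $\sum_{i=0}^{\infty} s_{i+1} \le M/C_2 < \infty$, so the series of successive-difference norms is summable.

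The first, easy consequence of summability is $c_k \le \tfrac{1}{k+1}\sum_{i=0}^{k} s_{i+1} = O(1/k)$. The main obstacle is promoting this to the strict $o(1/k)$ rate. For this I would use a tail argument: define $T_m := \sum_{i\ge m} s_{i+1}$, which is a non-negative, non-increasing sequence with $T_m \to 0$ because the full series converges. Since $c_k$ is the minimum over $0\le i\le k$, for any $k$,
\begin{equation*}
c_k \;\le\; \min_{\lceil k/2\rceil \le i \le k} s_{i+1} \;\le\; \frac{1}{k-\lceil k/2\rceil+1}\sum\nolimits_{i=\lceil k/2\rceil}^{k} s_{i+1} \;\le\; \frac{2\,T_{\lceil k/2\rceil}}{k}.
\end{equation*}
Multiplying both sides by $k$ yields $k\, c_k \le 2\, T_{\lceil k/2\rceil} \to 0$ as $k\to\infty$, which is exactly the statement $c_k = o(1/k)$.

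In summary, the proof reduces to (i) telescoping the descent inequality, (ii) invoking the lower bound to pass to a finite series, and (iii) converting summability into a sharper rate for the running minimum via a halved-window averaging bound. The only non-routine piece is step (iii); everything else is a direct combination of Properties \ref{pro:property 1} and \ref{pro:property 2}.
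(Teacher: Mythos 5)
Your proposal is correct and follows essentially the same route as the paper: telescope the sufficient-descent inequality of Property \ref{pro:property 2}, use the lower bound from Property \ref{pro:property 1} to conclude summability, and then convert summability of the squared successive differences into an $o(1/k)$ rate for the running minimum $c_k$. The only difference is that the paper outsources this last conversion to a cited lemma (verifying monotonicity, nonnegativity, and summability of $c_k$ as its hypotheses), whereas you prove it directly with the standard halved-window averaging bound $k\,c_k \le 2\,T_{\lceil k/2\rceil}\to 0$, which is a valid, self-contained substitute.
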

\begin{proof}
The proof of this theorem is included in the supplementary materials\cref{note}. 
\end{proof}
\section{Experiments}
\label{sec:experiment}
In this section, we evaluate dlADMM algorithm using benchmark datasets. Effectiveness, efficiency and convergence properties of dlADMM are compared with state-of-the-art methods. All experiments were conducted on 64-bit Ubuntu16.04 LTS with Intel(R) Xeon processor and GTX1080Ti GPU. 

\subsection{Experiment Setup}
\subsubsection{Dataset} In this experiment, two benchmark datasets were used for performance evaluation:  MNIST \cite{lecun1998gradient} and  Fashion MNIST \cite{xiao2017fashion}. The MNIST dataset has ten classes of handwritten-digit images, which was firstly introduced by Lecun et al. in 1998 \cite{lecun1998gradient}. It contains 55,000 training samples and 10,000 test samples with 784 features each, which is provided by the Keras library \cite{chollet2017deep}. Unlike the MNIST dataset, the Fashion MNIST dataset has ten classes of assortment images on the website of Zalando, which is Europe's largest online fashion platform \cite{xiao2017fashion}. The Fashion-MNIST dataset consists of 60,000 training samples and 10,000 test samples with 784 features each.
\subsubsection{Experiment Settings}
\indent We set up a network architecture which contained two hidden layers with $1,000$ hidden units each. The Rectified linear unit (ReLU) was used for the activation function for both network structures. The loss function was set as the deterministic cross-entropy loss. $\nu$ was set to $10^{-6}$. $\rho$ was initialized as $10^{-6}$ and was multiplied by 10 every 100 iterations. The number of iteration was set to $200$. In the experiment, one iteration means one epoch.
\subsubsection{Comparison Methods}
\indent Since this paper focuses on fully-connected deep neural networks, SGD and its variants and ADMM are state-of-the-art methods and hence were served as comparison methods. For SGD-based methods, the full batch dataset is used for training models. All parameters were chosen by the accuracy of the training dataset. The baselines are described as follows: \\
\indent 1. Stochastic Gradient Descent (SGD) \cite{bottou2010large}. The SGD and its variants are the most popular deep learning optimizers, whose convergence has been studied extensively in the literature. The learning rate of SGD was set to $10^{-6}$ for both the MNIST and Fashion MNIST datasets.\\
\indent 2. Adaptive gradient algorithm (Adagrad) \cite{duchi2011adaptive}. Adagrad is an improved version of SGD: rather than fixing the learning rate during iteration, it adapts the learning rate to the hyperparameter. The learning rate of Adagrad was set to $10^{-3}$ for both the MNIST and Fashion MNIST datasets.\\
\indent 3. Adaptive learning rate method (Adadelta) \cite{zeiler2012adadelta}. As an improved version of the Adagrad, the Adadelta is proposed to overcome the sensitivity to hyperparameter selection. The learning rate of Adadelta was set to $0.1$ for both the MNIST and Fashion MNIST datasets.\\
\indent 4. Adaptive momentum estimation (Adam) \cite{kingma2014adam}. Adam is the most popular optimization method for deep learning models. It estimates the first and second momentum in order to correct the biased gradient and thus makes convergence fast. The learning rate of Adam was set to $10^{-3}$ for both the MNIST and Fashion MNIST datasets.\\
\indent 5. Alternating Direction Method of Multipliers (ADMM) \cite{taylor2016training}. ADMM is a powerful convex optimization method because it can split an objective function into a series of subproblems, which are coordinated to get global solutions. It is scalable to large-scale datasets and supports parallel computations. The $\rho$ of ADMM was set to $1$ for both the MNIST and Fashion MNIST datasets.\\
\subsection{Experimental Results}
\indent In this section, experimental results of the dlADMM algorithm are analyzed against comparison methods.
\subsubsection{Convergence}
\begin{figure}[h]
  \centering
\small
\begin{minipage}
{0.49\linewidth}
\centerline{\includegraphics[width=\columnwidth]
{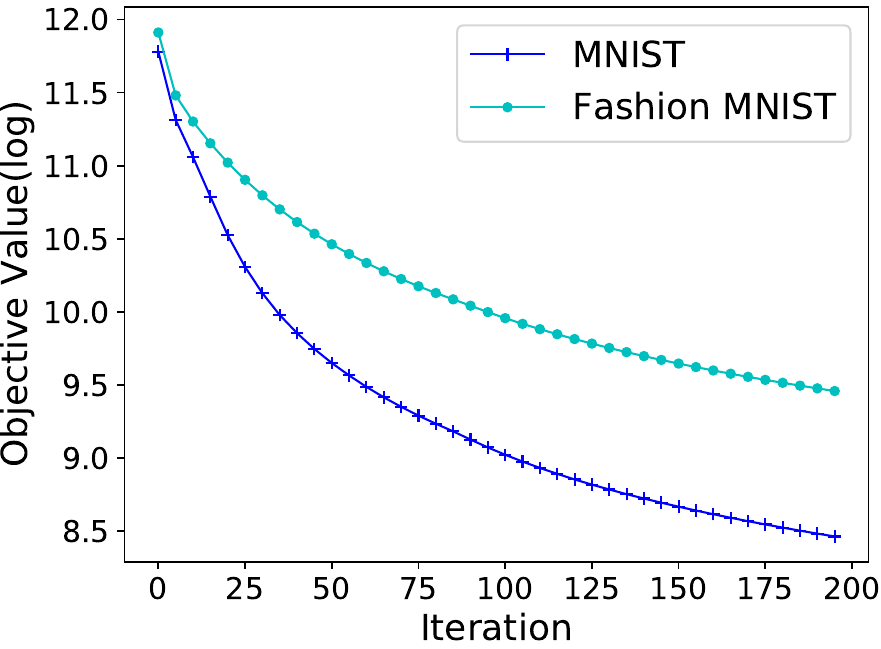}}
\centerline{(a). Objective value}
\end{minipage}
\hfill
\begin{minipage}
{0.49\linewidth}
\centerline{\includegraphics[width=\columnwidth]
{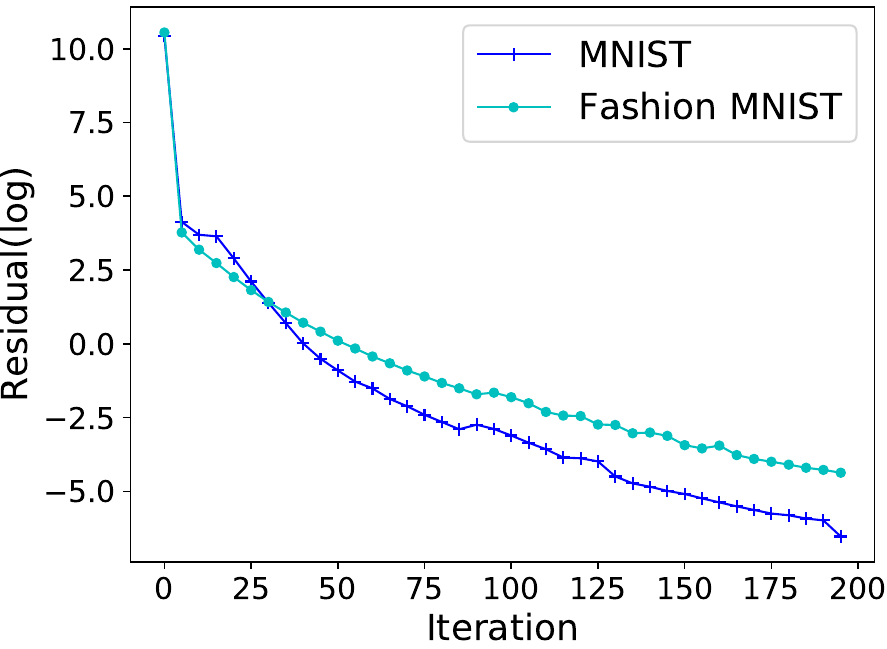}}
\centerline{(b).  Residual}
\end{minipage}
  \caption{Convergence curves of dlADMM algorithm for MNIST and Fashion MNIST datasets when $\rho=1$: dlADMM algorithm converged.}
  \label{fig:convergence}
\end{figure}
\begin{figure}[h]
  \centering
\begin{minipage}
{0.49\linewidth}
\centerline{\includegraphics[width=\textwidth]
{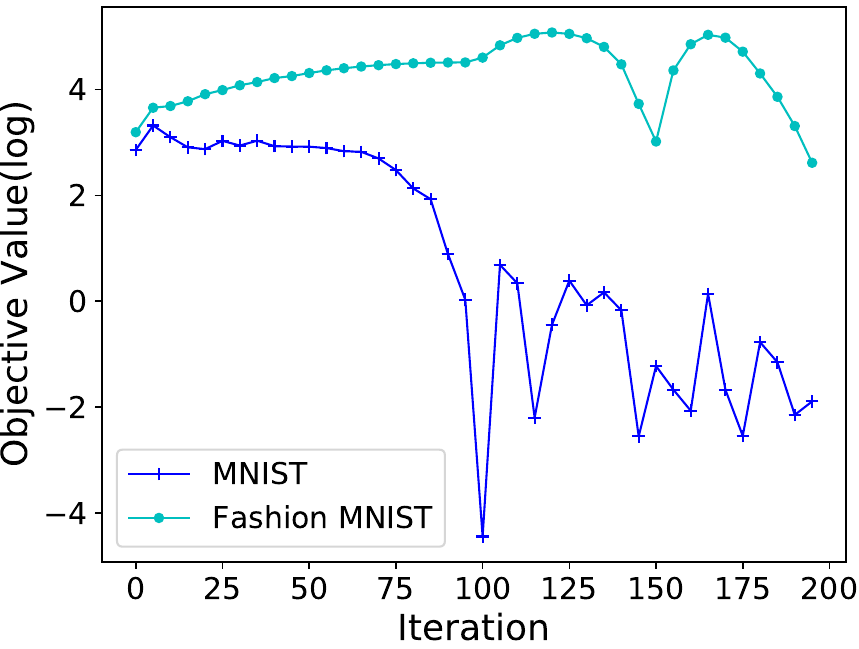}}
\centerline{(a). Objective value}
\end{minipage}
\hfill
\begin{minipage}
{0.49\linewidth}
\centerline{\includegraphics[width=\textwidth]
{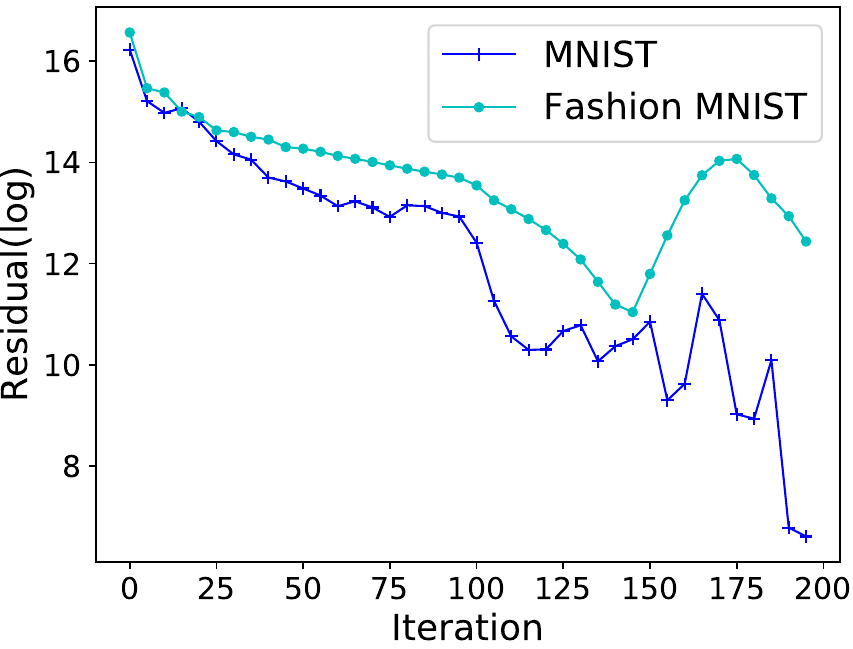}}
\centerline{(b). Residual}
\end{minipage}
  \caption{Divergence curves of the dlADMM algorithm for the MNIST and the Fashion MNIST datasets when $\rho=10^{-6}$: dlADMM algorithm diverged.}
  \label{fig:divergence}
\end{figure}
\indent First, we show that our proposed dlADMM algorithm converges when $\rho$ is sufficiently large and diverges when $\rho$ is small for both the MNIST dataset and the Fashion MNIST dataset.\\
\indent
The convergence and divergence of dlADMM algorithm are shown in Figures \ref{fig:convergence} and \ref{fig:divergence} when $\rho=1$ and $\rho=10^{-6}$ ,respectively. In Figures \ref{fig:convergence}(a) and \ref{fig:divergence}(a), the X axis and Y axis denote the number of iterations and the logarithm of objective value, respectively. In Figures, \ref{fig:convergence}(b) and \ref{fig:divergence}(b), the X axis and Y axis denote the number of iterations and the logarithm of the residual, respectively. Figure \ref{fig:convergence}, both the objective value and the residual decreased monotonically for the MNIST dataset and the Fashion-MNIST dataset, which validates our theoretical guarantees in Theorem \ref{thero: theorem 2}. Moreover, Figure \ref{fig:divergence} illustrates that  both the objective value and  the residual diverge when $\rho=10^{-6}$. The curves fluctuated drastically on the objective value. Even though there was a decreasing trend for the residual, it still fluctuated irregularly and failed to converge.
\subsubsection{Performance}
\begin{figure}[h]
  \centering
\begin{minipage}
{0.49\linewidth}
\centerline{\includegraphics[width=\textwidth]
{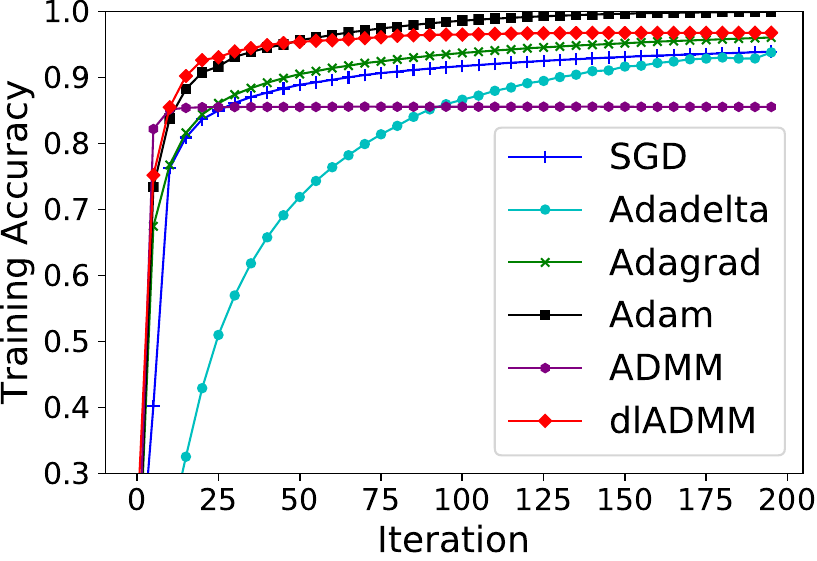}}
\centerline{(a). Training accuracy}
\end{minipage}
\hfill
\begin{minipage}
{0.49\linewidth}
\centerline{\includegraphics[width=\textwidth]
{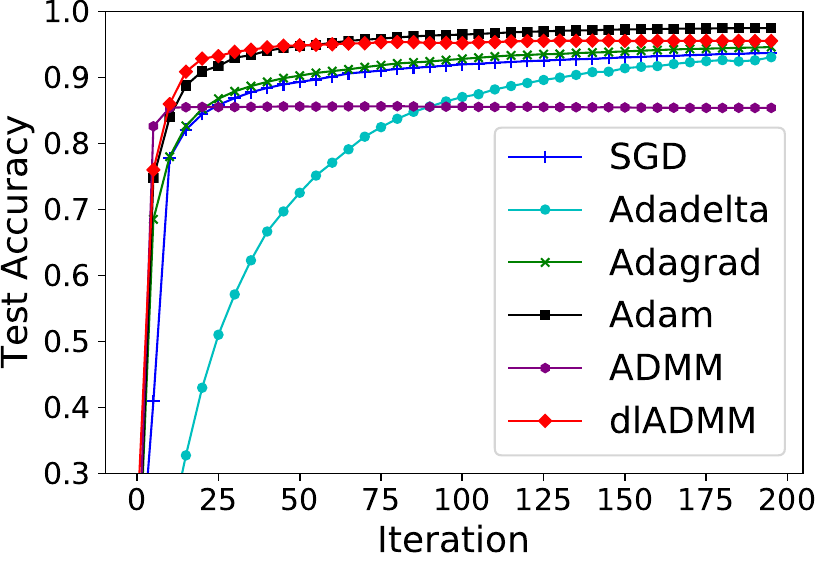}}
\centerline{(b). Test accuracy}
\end{minipage}
  \caption{Performance of all methods for the MNIST dataset: dlADMM algorithm outperformed most of the comparison methods.}
  \label{fig:MNIST}
\end{figure}
\begin{figure}[h]
  \centering
\begin{minipage}
{0.49\linewidth}
\centerline{\includegraphics[width=\textwidth]
{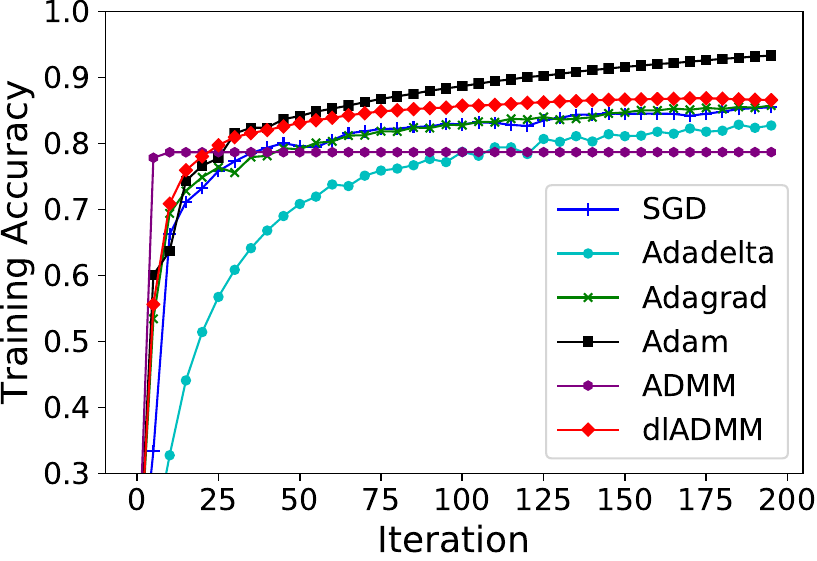}}
\centerline{(a). Training Accuracy}
\end{minipage}
\hfill
\begin{minipage}
{0.49\linewidth}
\centerline{\includegraphics[width=\textwidth]
{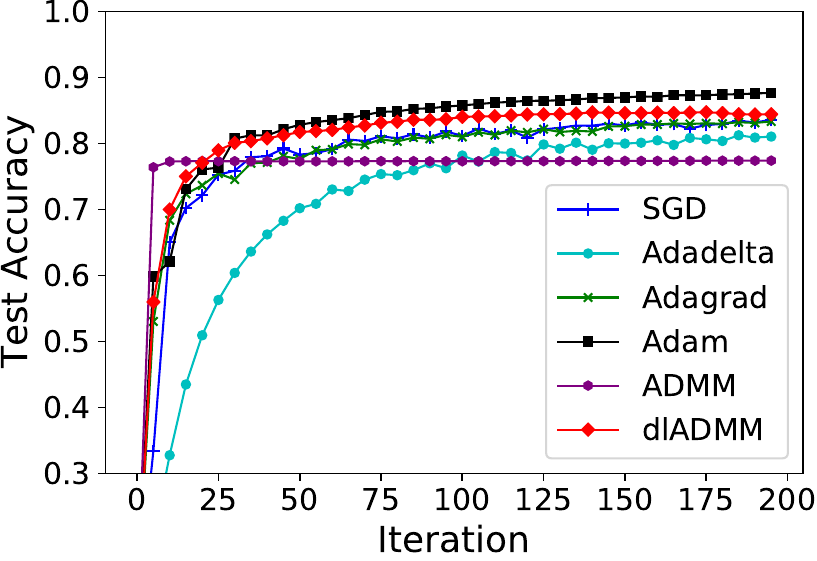}}
\centerline{(b).Test Accuracy}
\end{minipage}
  \caption{Performance of all methods for the Fashion MNIST dataset: dlADMM algorithm outperformed most of the comparsion methods.}
  \label{fig:FMNIST}
\end{figure}
\begin{table}[!hbp]
\scriptsize
\centering
\begin{tabular}{c|c|c|c|c|c}
\hline\hline
\multicolumn{6}{c}{MNIST dataset: From 200 to 1,000 neurons}\\
\hline
\diagbox{$\rho$}{neurons} &200&400 &600  &800 &1000 \\\hline
$10^{-6}$&1.9025&2.7750&3.6615&4.5709&5.7988\\\hline
 $10^{-5}$&2.8778&4.6197&6.3620&8.2563&10.0323\\\hline
 $10^{-4}$&2.2761&3.9745&5.8645&7.6656&9.9221\\\hline
 $10^{-3}$&2.4361&4.3284&6.5651&8.7357&11.3736\\\hline
 $10^{-2}$&2.7912&5.1383&7.8249&10.0300&13.4485\\
\hline\hline
\multicolumn{6}{c}{Fashion MNIST dataset: From 200 to 1,000 neurons }\\
\hline
\diagbox{$\rho$}{neurons} &200&400&600&800&1000\\
\hline
$10^{-6}$&2.0069&2.8694&4.0506&5.1438&6.7406\\\hline
$10^{-5}$&3.3445&5.4190&7.3785&9.0813&11.0531\\\hline
$10^{-4}$&2.4974&4.3729&6.4257&8.3520&10.0728\\\hline
$10^{-3}$&2.7108&4.7236&7.1507&9.4534&12.3326\\\hline
$10^{-2}$&2.9577&5.4173&8.2518&10.0945&14.3465
\\\hline\hline
\end{tabular}
\caption{The relationship between running time per iteration (in second) and the number of neurons for each layer as well as value of $\rho$ when the training size was fixed: generally, the running time increased as the number of neurons and the value of $\rho$ became larger.}
\label{tab:running time 1}
\end{table}
\begin{table}[!hbp]
\centering
\scriptsize
\begin{tabular}{c|c|c|c|c|c}
\hline\hline
\multicolumn{6}{c}{MNIST dataset: From 11,000  to 55,000 training samples}\\
\hline
\diagbox{$\rho$}{size} &11,000&22,000 &33,000  &44,000 &55,000 \\\hline
$10^{-6}$&1.0670&2.0682&3.3089&4.6546&5.7709\\\hline
$10^{-5}$&2.3981&3.9086&6.2175&7.9188&10.2741\\\hline
 $10^{-4}$&2.1290&3.7891&5.6843&7.7625&9.8843\\\hline
 $10^{-3}$&2.1295&4.1939&6.5039&8.8835&11.3368\\\hline
 $10^{-2}$&2.5154&4.9638&7.6606&10.4580&13.4021\\
\hline\hline
\multicolumn{6}{c}{Fashion MNIST dataset: From 12,000  to 60,000 training samples }\\
\hline
\diagbox{$\rho$}{size} &12,000&24,000&36,000&48,000&60,000\\
\hline
$10^{-6}$&1.2163&2.3376&3.7053&5.1491&6.7298\\\hline
$10^{-5}$&2.5772&4.3417&6.6681&8.3763&11.0292\\\hline
$10^{-4}$&2.3216&4.1163&6.2355&8.3819&10.7120\\\hline
$10^{-3}$&2.3149&4.5250&6.9834&9.5853&12.3232\\\hline
$10^{-2}$&2.7381&5.3373&8.1585&11.1992&14.2487
\\\hline\hline
\end{tabular}
\caption{The relationship between running time per iteration (in second) and the size of  training samples as well as value of $\rho$ when the number of neurons is fixed: generally, the running time increased as the training sample and the value of $\rho$ became larger.}
\label{tab:running time 2}
\end{table}
\indent Figure \ref{fig:MNIST} and Figure \ref{fig:FMNIST} show the curves of the training accuracy and test accuracy of  our proposed dlADMM algorithm and baselines, respectively. Overall, both the training accuracy and the test accuracy of  our proposed dlADMM algorithm outperformed most baselines for both the MNIST dataset and the Fashion MNIST dataset.  Specifically, the curves of our dlADMM algorithm soared to $0.8$ at the early stage, and then raised steadily towards to $0.9$ or more. The curves of the most SGD-related methods, SGD, Adadelta, and Adagrad, moved more slowly than our proposed dlADMM algorithm. The curves of the ADMM also rocketed to around $0.8$, but decreased slightly later on. Only the state-of-the-art Adam performed better than dlADMM slightly.

\subsubsection{Scalability Analysis}
\indent In this subsection, the relationship between running time per iteration of our proposed dlADMM algorithm and three potential factors, namely, the value of $\rho$, the size of training samples, and the number of neurons was explored. The running time was calculated by the average of 200 iterations. \\
\indent Firstly, when the training size was fixed, the computational result for the MNIST dataset and Fashion MNIST dataset is shown in Table \ref{tab:running time 1}. The number of neurons for each layer ranged from 200 to 1,000, with an increase of 200 each time. The value of $\rho$ ranged from $10^{-6}$ to $10^{-2}$, with multiplying by 10  each time. Generally, the running time increased as the number of neurons and the value of $\rho$ became larger. However, there were a few exceptions: for example, when there were 200 neurons for the MNIST dataset, and $\rho$ increased from $10^{-5}$ to $10^{-4}$, the running time per iteration dropped from  $2.8778$ seconds to $2.2761$ seconds.\\
\indent Secondly, we fixed the number of neurons for each layer as $1,000$. The relationship between running time per iteration, the training size and the value of $\rho$ is shown in Table \ref{tab:running time 2}. The value of $\rho$ ranged from $10^{-6}$ to $10^{-2}$, with multiplying by 10  each time. The training size of the MNIST dataset ranged from $11,000$ to $55,000$, with an increase of $11,000$ each time. The training size of the Fashion MNIST dataset ranged from $12,000$ to $60,000$, with an increase of $12,000$ each time. Similiar to Table \ref{tab:running time 2}, the running time increased generally as the training sample and the value of $\rho$ became larger and some exceptions exist.
\section{Conclusion and Future Work}
\label{sec:conclusion}
\indent Alternating Direction Method of Multipliers (ADMM) is a good alternative to Stochastic Gradient Descent (SGD) for deep learning problems. In this paper, we propose a novel deep learning Alternating Direction Method of Multipliers (dlADMM) to address some previously mentioned challenges. Firstly, the dlADMM  updates parameters from backward to forward in order to transmit parameter information more efficiently. The time complexity is successfully reduced from $O(n^3)$ to $O(n^2)$  by iterative quadratic approximations and backtracking. Finally, the dlADMM is guaranteed to converge to a critical solution under mild conditions. Experiments on benchmark datasets demonstrate that our proposed dlADMM algorithm outperformed most of the comparison methods.\\
\indent In the future, we may extend our dlADMM from the fully-connected neural network to the famous Convolutional Neural Network (CNN) or Recurrent Neural Network (RNN), because our convergence guarantee is also applied to them.  We also consider other nonlinear activation functions such as sigmoid and hyperbolic tangent function (tanh).
\section*{Acknowledgement}
\indent This work was supported by the National Science Foundation grant: $\#1755850$.
\bibliography{example_paper}
\bibliographystyle{plain}
\newpage
\onecolumn
\large{Supplementary Materials}
\begin{appendix}
\small
\section{Algorithms to update $W^{k+1}_l$ and $a^{k+1}_l$}
\label{sec:same algorithm}
\indent The algorithms to update $W^{k+1}_l$ and $a^{k+1}_l$ are described in the Algorithms \ref{algo:theta update} and \ref{algo:tau update}, respectively.
\begin{algorithm} 
\caption{The Backtracking Algorithm  to update ${W}^{k+1}_{l}$ } 
\begin{algorithmic}[1]
\label{algo:theta update}
\REQUIRE ${\textbf{W}}^{k+1}_{l-1}$,${\textbf{b}}^{k+1}_{l-1}$, ${\textbf{z}}^{k+1}_{l-1}$,${\textbf{a}}^{k+1}_{l-1}$,$u^k$, $\rho$, some constant ${\gamma}>1$. 
\ENSURE ${\theta}^{k+1}_l$,${{W}}^{k+1}_{l}$. 
\STATE Pick up ${\alpha}$ and ${\zeta}=\overline{W}^{k+1}_l-\nabla_{\overline{W}^{k+1}_l}\phi/{\alpha}$.
\WHILE{$\phi(\{W^{k+1}_i\}_{i=1}^{l-1},{\zeta},\{\overline{W}^{k+1}_i\}_{i=l+1}^{L},{\textbf{b}}^{k+1}_{l},{\textbf{z}}^{k+1}_{l},{\textbf{a}}^{k+1}_{l},u^k)>{P}_l({\zeta};{\alpha})$}
\STATE ${\alpha}\leftarrow {\alpha}\ {\gamma}$.\\
\STATE Solve ${\zeta}$ by Equation \eqref{eq:update W}.\\
\ENDWHILE
\STATE Output ${\theta}^{k+1}_l \leftarrow {\alpha} $.\\
\STATE Output ${W}^{k+1}_{l}\leftarrow {\zeta}$.
\end{algorithmic}
\end{algorithm}
\begin{algorithm} 
\caption{The Backtracking Algorithm  to update ${a}^{k+1}_{l}$ } 
\begin{algorithmic}[1]
\label{algo:tau update}
\REQUIRE ${\textbf{W}}^{k+1}_{l}$,${\textbf{b}}^{k+1}_{l}$, ${\textbf{z}}^{k+1}_{l}$,${\textbf{a}}^{k+1}_{l-1}$,$u^k$, $\rho$, some constant ${\eta}>1$. 
\ENSURE ${\tau}^{k+1}_l$,${{a}}^{k+1}_{l}$. 
\STATE Pick up ${t}$ and ${\beta}=\overline{a}^{k+1}_l-\nabla_{\overline{a}^{k+1}_l}\phi/{t}$
\WHILE{$\phi({\textbf{W}}^{k+1}_{l+1},{\textbf{b}}^{k+1}_{l+1},{\textbf{z}}^{k+1}_{l+1},\{a^{k+1}_i\}_{i=1}^{l-1},{\beta},\{\overline{a}^{k+1}_i\}_{i=l+1}^{L-1},u^k)>{Q}_l({\beta};{t})$}
\STATE ${t}\leftarrow {t}{\eta}$.\\
\STATE ${\beta}\leftarrow \overline{a}^{k+1}_{l}-\nabla_{\overline{a}^{k+1}_{l}}\phi/{t}$.\\
\ENDWHILE
\STATE Output ${\tau}^{k+1}_l \leftarrow {t} $.\\
\STATE Output ${a}^{k+1}_{l}\leftarrow {\beta}$.
\end{algorithmic}
\end{algorithm}
\section{Lemmas for the Proofs of Properties}
\label{sec:proofs}
The following several lemmas are preliminary results.
\begin{lemma}
\label{lemma:lemma 1}
Equation \eqref{eq:update overline W} holds if and only if there exists $\overline{s}\in \partial\Omega_l(\overline{W}^{k+1}_l)$, the subgradient of $\Omega_l(\overline{W}^{k+1}_l)$  such that
\begin{align*}
    &\nabla_{{W}^{k}_l}\phi({\overline{\textbf{W}}}^{k+1}_{l+1},\overline{\textbf{b}}^{k+1}_{l},\overline{\textbf{z}}^{k+1}_{l},\overline{\textbf{a}}^{k+1}_{l},u^k) +\overline{\theta}^{k+1}_l \circ(\overline{W}^{k+1}_l-{W}^{k}_l)+\overline{s}=0
\end{align*}
Likewise, Equation \eqref{eq:update W} holds if and only if there exists $s\in \partial\Omega_l(W^{k+1}_l)$, the subgradient of $\Omega_l(W^{k+1}_l)$  such that
\begin{align*}
    &\nabla_{\overline{W}^{k+1}_l}\phi({\textbf{W}}^{k+1}_{l-1},\textbf{b}^{k+1}_{l-1},\textbf{z}^{k+1}_{l-1},{\textbf{a}}^{k+1}_{l-1},u^k) +\theta^{k+1}_l \circ(W^{k+1}_l-\overline{W}^{k+1}_l)+s=0
\end{align*}
\end{lemma}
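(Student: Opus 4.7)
The plan is to obtain both equivalences as direct consequences of Fermat's first-order optimality rule applied to the quadratic-plus-regularizer surrogate problems in Equations \eqref{eq:update overline W} and \eqref{eq:update W}. Because the update rules are defined as $\arg\min$ of a sum of a smooth strongly convex function and a convex proper regularizer, the optimality condition is both necessary and sufficient, which is precisely the ``if and only if'' asserted in the lemma.

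First I would differentiate the surrogate $\overline{P}_l(W_l;\overline{\theta}^{k+1}_l)$ term by term in $W_l$. The constant term $\phi(\overline{\textbf{W}}^{k+1}_{l+1},\overline{\textbf{b}}^{k+1}_l,\overline{\textbf{z}}^{k+1}_l,\overline{\textbf{a}}^{k+1}_l,u^k)$ contributes nothing; the linear term contributes exactly $\nabla_{W^k_l}\phi$ evaluated at $(\overline{\textbf{W}}^{k+1}_{l+1},\overline{\textbf{b}}^{k+1}_l,\overline{\textbf{z}}^{k+1}_l,\overline{\textbf{a}}^{k+1}_l,u^k)$; and the Hadamard-weighted quadratic penalty $\tfrac12\lVert \overline{\theta}^{k+1}_l\circ (W_l-W^k_l)^{\circ 2}\rVert_1$ contributes $\overline{\theta}^{k+1}_l\circ (W_l-W^k_l)$, since entrywise differentiation of $\tfrac12 \overline{\theta}^{k+1}_{l,ij}(W_{l,ij}-W^k_{l,ij})^2$ yields $\overline{\theta}^{k+1}_{l,ij}(W_{l,ij}-W^k_{l,ij})$. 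Collecting these pieces, the (full) gradient of $\overline{P}_l$ at $\overline{W}^{k+1}_l$ equals the first two terms in the displayed optimality equation.

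Next I would apply Fermat's rule to the optimization problem $\min_{W_l}\ \overline{P}_l(W_l;\overline{\theta}^{k+1}_l)+\Omega_l(W_l)$. Since $\overline{P}_l$ is smooth and convex (its Hessian, being a diagonal matrix of entries of $\overline{\theta}^{k+1}_l>0$, is positive definite) and $\Omega_l$ is convex, continuous, and proper by the problem setup, the subdifferential calculus sum rule applies without qualification: $\partial(\overline{P}_l+\Omega_l)(\overline{W}^{k+1}_l)=\nabla \overline{P}_l(\overline{W}^{k+1}_l)+\partial \Omega_l(\overline{W}^{k+1}_l)$. The minimizer is then characterized by $0\in\nabla \overline{P}_l(\overline{W}^{k+1}_l)+\partial \Omega_l(\overline{W}^{k+1}_l)$, which rearranges to the existence of $\overline{s}\in\partial\Omega_l(\overline{W}^{k+1}_l)$ satisfying the stated identity. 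Convexity of the objective is what makes the optimality condition sufficient as well as necessary, giving the ``if and only if''.

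For the forward-pass claim, I would repeat the argument verbatim, with $\overline{P}_l$ replaced by $P_l$, the expansion point $W^k_l$ replaced by $\overline{W}^{k+1}_l$, and the conditioning variables updated to $(\textbf{W}^{k+1}_{l-1},\textbf{b}^{k+1}_{l-1},\textbf{z}^{k+1}_{l-1},\textbf{a}^{k+1}_{l-1},u^k)$. There is no genuine obstacle here: the only care required is bookkeeping the Hadamard gradient correctly and confirming that the problem datum $\Omega_l$ is convex (stated in Section \ref{sec:problem}) and that $\overline{\theta}^{k+1}_l,\theta^{k+1}_l>0$ (enforced by the backtracking in Algorithms \ref{algo:overline theta update} and \ref{algo:theta update}), so the sum rule for subdifferentials and Fermat's rule both apply cleanly.
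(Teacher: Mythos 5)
Your proposal is correct and follows essentially the same route as the paper, which simply invokes the first-order optimality conditions of Equations \eqref{eq:update overline W} and \eqref{eq:update W}; you have merely spelled out the details (the entrywise gradient of the Hadamard-weighted quadratic, the subdifferential sum rule, and the role of convexity in making the condition sufficient as well as necessary) that the paper leaves implicit. No gaps.
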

\begin{proof}
    These can be obtained by directly applying the optimality conditions of Equation \eqref{eq:update overline W} and Equation \eqref{eq:update W}, respectively.
\end{proof}
\begin{lemma}
$\nabla_{z_L^k} R(z_L^k;y)+u^k=0$ for all $k\in \mathbb{N}$.
\label{lemma:z_l optimality}
\end{lemma}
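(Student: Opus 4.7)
The plan is to derive the identity directly from the first-order optimality condition of the $z_L^{k+1}$-subproblem in Equation \eqref{eq:update zl} and combine it with the dual update on Line 22 of Algorithm \ref{algo:dlADMM}. Since Assumption \ref{ass:assumption 2} guarantees that $R(\cdot;y)$ is (Lipschitz) differentiable, the subproblem that defines $z_L^{k+1}$ is smooth in $z_L$, so the KKT condition is an honest gradient equation rather than a subgradient inclusion.

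First I would isolate the only $z_L$-dependent terms in the augmented Lagrangian defined in Equation \eqref{eq: Lagrangian}: inspection of $\phi(\mathbf{W},\mathbf{b},\mathbf{z},\mathbf{a},u)$ shows that every quadratic penalty of the form $\Vert z_l-W_la_{l-1}-b_l\Vert_2^2$ with $l<L$ is decoupled from $z_L$, and the activation mismatch $\Vert a_l-f_l(z_l)\Vert_2^2$ involves only $z_l$ with $l\le L-1$. Therefore the $z_L$-part of $\phi$ reduces to $u^T(z_L-W_La_{L-1}-b_L)+(\rho/2)\Vert z_L-W_La_{L-1}-b_L\Vert_2^2$, and differentiating at $z_L^{k+1}$ with the iterates frozen at $(W_L^{k+1},b_L^{k+1},a_{L-1}^{k+1})$ yields
\begin{align*}
\nabla_{z_L^{k+1}}\phi = u^k+\rho\bigl(z_L^{k+1}-W_L^{k+1}a_{L-1}^{k+1}-b_L^{k+1}\bigr) = u^k+\rho\, r^{k+1}.
\end{align*}

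Next I would write the optimality condition of Equation \eqref{eq:update zl}, namely $\nabla_{z_L^{k+1}} R(z_L^{k+1};y)+\nabla_{z_L^{k+1}}\phi=0$, substitute the expression just obtained, and invoke the dual update $u^{k+1}=u^k+\rho r^{k+1}$ from Line 22 to conclude $\nabla_{z_L^{k+1}} R(z_L^{k+1};y)+u^{k+1}=0$. Reindexing gives the stated identity for every $k\ge 1$; the base case $k=0$ is handled by choosing a consistent initialization of $u^0$, an assumption that is standard for ADMM-type schemes and that is implicit in the convergence framework of Section \ref{sec:convergence}.

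I do not anticipate any real obstacle here: the lemma is an algebraic bookkeeping identity that tightly couples the $z_L$-stationarity with the dual update. The only point that requires care is making sure the derivative of $\phi$ is taken \emph{after} all other primal iterates have been fixed at the values used inside the $z_L^{k+1}$-subproblem, so that the penalty gradient collapses exactly to $u^k+\rho r^{k+1}$ and matches the definition of $u^{k+1}$ without off-by-one errors.
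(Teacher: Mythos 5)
Your proof is correct and follows essentially the same route as the paper: write the first-order optimality condition of the $z_L$-subproblem in Equation \eqref{eq:update zl}, observe that the $z_L$-gradient of $\phi$ collapses to $u^k+\rho\,r^{k+1}$, and absorb that into the dual update to get $\nabla_{z_L^{k+1}}R(z_L^{k+1};y)+u^{k+1}=0$ (the paper states the same identity one index earlier, using $u^{k}=u^{k-1}+\rho r^{k}$). Your explicit handling of the $k=0$ base case via a consistent initialization of $u^0$ is a detail the paper glosses over, but it does not change the argument.
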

\begin{proof}
The optimality condition of $z^k_L$ in Equation \eqref{eq:update zl} gives rise to 
\begin{align*}
    \nabla_{z^k_L} R(z^{k}_L;y)+\rho(z^{k}_L-W^{k}_La^{k}_{L-1}-b^{k}_L)+u^{k-1}=0
\end{align*}
Because $u^k=u^{k-1}+\rho(z^{k}_L-W^{k}_La^{k}_{L-1}-b^{k}_L)$, then we have $\nabla_{z_L^k} R(z_L^k;y)+u^k=0$.
\end{proof}
\begin{lemma}
\label{lemma: R(z_l) lipschitz}
It holds that $\forall z_{L,1},z_{L,2}\in\mathbb{R}^{n_L}$,
\begin{align*}
    &R(z_{L,1};y)\leq R(z_{L,2};y)+\nabla_{z_{L,2}} R^T(z_{L,2};y)(z_{L,1}-z_{L,2})+(H/2)\Vert z_{L,1}-z_{L,2}\Vert^2\\&-R(z_{L,1};y)\leq -R(z_{L,2};y)-\nabla_{z_{L,2}} R^T(z_{L,2};y)(z_{L,1}-z_{L,2})+(H/2)\Vert z_{L,1}-z_{L,2}\Vert^2
\end{align*}
\end{lemma}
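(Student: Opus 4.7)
The plan is to derive both inequalities as a single application of the classical descent (or ``quadratic upper bound'') lemma for functions whose gradient is $H$-Lipschitz continuous. The hypothesis I would invoke is Assumption~\ref{ass:assumption 2}, which states precisely that $R(\cdot;y)$ is differentiable with $\nabla R$ being $H$-Lipschitz. No convexity of $R$ is required for this particular estimate, so the proof will rely only on the Lipschitz gradient.

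First, I would write the difference $R(z_{L,1};y) - R(z_{L,2};y)$ via the fundamental theorem of calculus along the segment joining $z_{L,2}$ to $z_{L,1}$, namely
\begin{align*}
R(z_{L,1};y) - R(z_{L,2};y) = \int_0^1 \nabla R\!\left(z_{L,2}+t(z_{L,1}-z_{L,2});y\right)^{T}(z_{L,1}-z_{L,2})\,dt.
\end{align*}
Subtracting the first-order term $\nabla R(z_{L,2};y)^{T}(z_{L,1}-z_{L,2})$ from both sides and pulling it inside the integral yields
\begin{align*}
R(z_{L,1};y) - R(z_{L,2};y) - \nabla R(z_{L,2};y)^{T}(z_{L,1}-z_{L,2}) = \int_0^1 \bigl(\nabla R(z_{L,2}+t(z_{L,1}-z_{L,2});y) - \nabla R(z_{L,2};y)\bigr)^{T}(z_{L,1}-z_{L,2})\,dt.
\end{align*}

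Second, I would apply Cauchy--Schwarz inside the integral and then use the Lipschitz bound $\Vert \nabla R(z_{L,2}+t(z_{L,1}-z_{L,2});y) - \nabla R(z_{L,2};y) \Vert \leq H t \Vert z_{L,1}-z_{L,2}\Vert$, producing $\int_0^1 H t \Vert z_{L,1}-z_{L,2}\Vert^{2} dt = (H/2)\Vert z_{L,1}-z_{L,2}\Vert^{2}$. Because this argument controls the absolute value of the left-hand side, I immediately obtain both the upper bound (the first inequality of the lemma) and the matching lower bound (which rearranges into the second inequality after multiplying by $-1$).

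There is no substantive obstacle here, as this is the textbook descent lemma; the main care is bookkeeping to make sure both signs of the inequality come out of the single absolute-value estimate and that the Lipschitz constant matches the $H$ of Assumption~\ref{ass:assumption 2}. I would end by noting that these two inequalities are precisely the form later invoked in the proofs of Properties~\ref{pro:property 1}--\ref{pro:property 3}, where $z_{L,1}$ and $z_{L,2}$ are instantiated with successive iterates of the dlADMM updates.
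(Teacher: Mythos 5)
Your proposal is correct and matches the paper's argument in substance: the paper simply cites the descent lemma (Lemma 2.1 of \cite{beck2009fast}) applied to both $R$ and $-R$, while you supply its standard proof via the fundamental theorem of calculus, Cauchy--Schwarz, and the $H$-Lipschitz gradient from Assumption~\ref{ass:assumption 2}, obtaining both signs at once from the absolute-value estimate. This is the same key fact and the same route, just written out rather than cited.
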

\begin{proof}
Because $R(z_L;y)$ is Lipschitz differentiable by Assumption \ref{ass:assumption 2}, so is $-R(z_L;y)$. Therefore, this lemma is proven exactly as same as Lemma 2.1 in \cite{beck2009fast}.
\end{proof}
\begin{lemma}
For Equations \eqref{eq:update overline b} and \eqref{eq:update b}, if $\overline{B},B\geq \nu$,then the following inequalities hold:
\begin{align}
&\overline{U}_{l}(\overline{\textbf{b}}^{k+1}_{l};\overline{B})\geqslant \phi(\overline{\textbf{W}}^{k+1}_{ l+1},\overline{\textbf{b}}^{k+1}_{l},\overline{\textbf{z}}^{k+1}_{l},\overline{\textbf{a}}^{k+1}_{l},u^k)  \label{eq:lipschitz b back}\\&
    U_{l}(\textbf{b}^{k+1}_{l};B)\geqslant \phi(\textbf{W}^{k+1}_{l},\textbf{b}^{k+1}_{l},\textbf{z}^{k+1}_{l-1},\textbf{a}^{k+1}_{l-1},u^k)  \label{eq:lipschitz b forward}
\end{align} 
\label{lemma:lemma 2}

\end{lemma}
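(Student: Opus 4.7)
The plan is to exploit the fact that, as a function of $b_l$ alone with all other blocks held fixed, $\phi$ is a simple quadratic, so its Lipschitz-smoothness constant in $b_l$ is read off directly from the Hessian. The inequality is then just the standard descent lemma for Lipschitz-smooth functions, applied at the expansion point of each quadratic approximation.

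First I would isolate the $b_l$-dependence in $\phi$. For $l=1,\dots,L-1$, the only term involving $b_l$ is $(\nu/2)\Vert z_l - W_l a_{l-1} - b_l\Vert_2^2$, so $\nabla^2_{b_l}\phi = \nu I$ and $\phi(\cdot,b_l,\cdot)$ is $\nu$-Lipschitz smooth in $b_l$. For $l=L$, the $b_L$-dependent terms are $u^T(z_L - W_L a_{L-1} - b_L) + (\rho/2)\Vert z_L - W_L a_{L-1} - b_L\Vert_2^2$, giving $\nabla^2_{b_L}\phi = \rho I$, so $\phi(\cdot,b_L,\cdot)$ is $\rho$-Lipschitz smooth in $b_L$.

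Next I would invoke the descent lemma: if $f$ is $M$-Lipschitz smooth, then $f(y) \leq f(x) + \nabla f(x)^T(y-x) + (M/2)\Vert y-x\Vert_2^2$ for all $x,y$. For the backward bound \eqref{eq:lipschitz b back}, take $x = b^k_l$ and $y = \overline{b}^{k+1}_l$; for the forward bound \eqref{eq:lipschitz b forward}, take $x = \overline{b}^{k+1}_l$ and $y = b^{k+1}_l$. Comparing with the definitions of $\overline{U}_l(\,\cdot\,;\overline{B})$ and $U_l(\,\cdot\,;B)$, the right-hand sides of the descent inequalities are exactly these quadratic approximations with the curvature parameter set to the smoothness constant ($\nu$ or $\rho$, depending on whether $l<L$ or $l=L$). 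Since $\overline{U}_l$ and $U_l$ are monotonically nondecreasing in their curvature parameters, the hypothesis that these parameters dominate the corresponding smoothness constant immediately upgrades the descent inequality to the claimed bound.

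The main obstacle, if any, is purely notational: keeping straight which block of variables appears in which term of $\phi$, separating the cases $l<L$ and $l=L$, and aligning the lemma's curvature hypothesis with the layer-specific smoothness constant (reading ``$\overline{B},B\geq\nu$'' as ``$\geq\nu$ for $l<L$ and $\geq\rho$ for $l=L$,'' in accordance with the choices $\overline{B}=\nu$ or $\rho$ and $B=\nu$ or $\rho$ made in the algorithm). Once these bookkeeping details are handled, no real analytic work remains beyond the single application of the descent lemma in each case.
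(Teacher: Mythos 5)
Your proposal is correct and is essentially the paper's own argument: the paper likewise observes that $\phi$ is Lipschitz differentiable in $\textbf{b}$ with constant $\nu$ and invokes the descent lemma (Lemma 2.1 of the FISTA paper) at the respective expansion points. You are in fact slightly more careful than the paper in separating the $l=L$ case, where the relevant smoothness constant is $\rho$ rather than $\nu$, which is the reading the algorithm's choices $\overline{B}=\rho$, $B=\rho$ for the last layer require.
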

\begin{proof}
Because $\phi(\textbf{W},\textbf{b},\textbf{z},\textbf{a},u)$ is Lipschitz differentiable with respect to $\textbf{b}$ with Lipschitz
coefficient $\nu$ (the definition of Lipschitz differentiablity can be found in \cite{beck2009fast}), we directly apply Lemma 2.1 in \cite{beck2009fast} to $\phi$ to obtain Equations \eqref{eq:lipschitz b back} and \eqref{eq:lipschitz b forward}, respectively.
\end{proof}
\begin{lemma}
\label{lemma:lemma 3}
It holds that for $\forall k\in \mathbb{N}$,
\begin{align}
& L_\rho(\overline{\textbf{W}}^{k+1}_{l+1},\overline{\textbf{b}}^{k+1}_{l+1},\overline{\textbf{z}}^{k+1}_{l+1},\overline{\textbf{a}}^{k+1}_{l+1},u^k)-L_\rho(\overline{\textbf{W}}^{k+1}_{l+1},\overline{\textbf{b}}^{k+1}_{l+1},\overline{\textbf{z}}^{k+1}_{l+1},\overline{\textbf{a}}^{k+1}_l,u^k)\geq \Vert\overline{\tau}_l^{k+1}\circ (\overline{a}^{k+1}_l-a_l^k)^{\circ 2}\Vert_{1}/2(l=1,\cdots,L-1)
\label{eq:overline a optimality}\\
&L_\rho(\overline{\textbf{W}}^{k+1}_{l+1},\overline{\textbf{b}}^{k+1}_{l+1},\overline{\textbf{z}}^{k+1}_{l+1},\overline{\textbf{a}}^{k+1}_l,u^k)\geq L_\rho(\overline{\textbf{W}}^{k+1}_{l+1},\overline{\textbf{b}}^{k+1}_{l+1},\overline{\textbf{z}}^{k+1}_l,\overline{\textbf{a}}^{k+1}_l,u^k)(l=1,\cdots,L-1)\label{eq:overline z optimality}\\
&L_\rho({\textbf{W}}^{k},{\textbf{b}}^{k},{\textbf{z}}^{k},{\textbf{a}}^{k},u^k)- L_\rho({\textbf{W}}^{k},{\textbf{b}}^{k},\overline{\textbf{z}}^{k+1}_L,{\textbf{a}}^k,u^k)\geq (\rho/2)\Vert \overline{z}^{k+1}_L-z^k_L\Vert^2_2 \label{eq:overline zl optimality}\\
& L_\rho(\overline{\textbf{W}}^{k+1}_{l+1},\overline{\textbf{b}}^{k+1}_{l+1},\overline{\textbf{z}}^{k+1}_l,\overline{\textbf{a}}^{k+1}_l,u^k)-L_\rho(\overline{\textbf{W}}^{k+1}_{l+1},\overline{\textbf{b}}^{k+1}_l,\overline{\textbf{z}}^{k+1}_l,\overline{\textbf{a}}^{k+1}_l,u^k)\geq (\nu/2)\Vert \overline{b}^{k+1}_l-b_l^k\Vert^2_2(l=1,\cdots,L-1)\label{eq:overline b optimality}\\
 &L_\rho({\textbf{W}}^{k},{\textbf{b}}^{k},\overline{\textbf{z}}^{k+1}_L,{\textbf{a}}^{k},u^k)- L_\rho({\textbf{W}}^{k},\overline{\textbf{b}}^{k+1}_L,\overline{\textbf{z}}^{k+1}_L,{\textbf{a}}^k,u^k)\geq (\rho/2)\Vert \overline{b}^{k+1}_L-b^k_L\Vert^2_2\label{eq:overline bl optimality}\\
& L_\rho(\overline{\textbf{W}}^{k+1}_{l+1},\overline{\textbf{b}}^{k+1}_l,\overline{\textbf{z}}^{k+1}_l,\overline{\textbf{a}}^{k+1}_l,u^k)-L_\rho(\overline{\textbf{W}}^{k+1}_l,\overline{\textbf{b}}^{k+1}_l,\overline{\textbf{z}}^{k+1}_l,\overline{\textbf{a}}^{k+1}_l,u^k)\geq \Vert\overline{\theta}_l^{k+1}\circ (\overline{W}^{k+1}_l-W_l^k)^{\circ 2}\Vert_{1}/2(l=1,\cdots,L)\label{eq:overline W optimality}\\
& L_\rho({\textbf{W}}^{k+1}_{l-1},{\textbf{b}}^{k+1}_{l-1},{\textbf{z}}^{k+1}_{l-1},{\textbf{a}}^{k+1}_{l-1},u^k)-L_\rho({\textbf{W}}^{k+1}_{l},{\textbf{b}}^{k+1}_{l-1},{\textbf{z}}^{k+1}_{l-1},{\textbf{a}}^{k+1}_{l-1},u^k)\geq \Vert{\theta}_l^{k+1}\circ ({W}^{k+1}_l-\overline{W}_l^{k+1})^{\circ 2}\Vert_{1}/2(l=1,\cdots,L)\label{eq:W optimality}\\
& L_\rho({\textbf{W}}^{k+1}_{l},{\textbf{b}}^{k+1}_{l-1},{\textbf{z}}^{k+1}_{l-1},{\textbf{a}}^{k+1}_{l-1},u^k)-L_\rho({\textbf{W}}^{k+1}_{l},{\textbf{b}}^{k+1}_{l},{\textbf{z}}^{k+1}_{l-1},{\textbf{a}}^{k+1}_{l-1},u^k)\geq (\nu/2)\Vert {b}^{k+1}_l-\overline{b}_l^{k+1}\Vert^2_2(l=1,\cdots,L-1)\label{eq:b optimality}\\
& L_\rho({\textbf{W}}^{k+1},{\textbf{b}}^{k+1}_{L-1},{\textbf{z}}^{k+1}_{L-1},{\textbf{a}}^{k+1},u^k)-L_\rho({\textbf{W}}^{k+1},{\textbf{b}}^{k+1},{\textbf{z}}^{k+1}_{L-1},{\textbf{a}}^{k+1},u^k)\geq (\rho/2)\Vert {b}^{k+1}_L-\overline{b}_L^{k+1}\Vert^2_2\label{eq:bl optimality}\\
 &L_\rho({\textbf{W}}^{k+1}_{l},{\textbf{b}}^{k+1}_{l},{\textbf{z}}^{k+1}_{l-1},{\textbf{a}}^{k+1}_{l-1},u^k)\geq L_\rho({\textbf{W}}^{k+1}_{l},{\textbf{b}}^{k+1}_{l},{\textbf{z}}^{k+1}_l,{\textbf{a}}^{k+1}_{l-1},u^k)(l=1,\cdots,L-1)\label{eq:z optimality}\\
 & L_\rho({\textbf{W}}^{k+1}_{l},{\textbf{b}}^{k+1}_{l},{\textbf{z}}^{k+1}_{l},{\textbf{a}}^{k+1}_{l-1},u^k)-L_\rho({\textbf{W}}^{k+1}_{l},{\textbf{b}}^{k+1}_{l},{\textbf{z}}^{k+1}_{l},{\textbf{a}}^{k+1}_l,u^k)\geq \Vert{\tau}_l^{k+1}\circ ({a}^{k+1}_l-\overline{a}_{l}^{k+1})^{\circ 2}\Vert_{1}/2(l=1,\cdots,L-1)
\label{eq:a optimality}
\end{align}\end{lemma}
\begin{proof}
Essentially, all inequalities can be obtained by applying optimality conditions of updating $\overline{a}^{k+1}_l$, $\overline{z}^{k+1}_l$, $\overline{b}^{k+1}_l$, $\overline{W}^{k+1}_l$, ${W}^{k+1}_l$, ${b}^{k+1}_l$, ${z}^{k+1}_l$ and ${a}^{k+1}_l$, respectively. We only prove Inequality \eqref{eq:overline zl optimality}, \eqref{eq:W optimality}, \eqref{eq:b optimality} and \eqref{eq:z optimality} .This is because Inequalities \eqref{eq:overline a optimality}, \eqref{eq:overline W optimality} and \eqref{eq:a optimality} follow the routine of Inequality \eqref{eq:W optimality}, Inequalities \eqref{eq:overline b optimality}, \eqref{eq:overline bl optimality} and \eqref{eq:bl optimality} follow the routine of Inequality \eqref{eq:b optimality}, and  Inequality \eqref{eq:overline z optimality} follows the routine of Inequality \eqref{eq:z optimality}.\\
\indent Firstly, we focus on Inequality \eqref{eq:overline zl optimality}.
\begin{align}
    \nonumber &L_\rho({\textbf{W}}^{k},{\textbf{b}}^{k},{\textbf{z}}^{k},{\textbf{a}}^{k},u^k)- L_\rho({\textbf{W}}^{k},{\textbf{b}}^{k},\overline{\textbf{z}}^{k+1}_L,{\textbf{a}}^k,u^k)\\\nonumber&=R(z^k_L;y)+(u^k)^T(z^k_L-W^k_La^k_{L-1}-b^k_L)+(\rho/2)\Vert z^k_L-W^k_La^k_{L-1}-b^k_L\Vert^2_2\\\nonumber &-R(\overline{z}^{k+1}_L;y)-(u^k)^T(\overline{z}^{k+1}_L-W^k_La^k_{L-1}-b^k_L)-(\rho/2)\Vert \overline{z}^{k+1}_L-W^k_La^k_{L-1}-b^k_L\Vert^2_2\\&\nonumber=R(z^k_L;y)-R(\overline{z}^{k+1}_L;y)+(u^k)^T(z^k_L-\overline{z}^{k+1}_L)+(\rho/2)\Vert \overline{z}^{k+1}_L-z^k_L\Vert^2_2\\&+ \rho(\overline{z}^{k+1}_L-W^k_La^k_{L-1}-b^k_L)^T(z^k_L-\overline{z}^{k+1}_L)\label{eq:cosine rule}
\end{align}
where the second equality follows from the cosine rule $\Vert z^k_L-W^k_La^k_{L-1}-b^k_L\Vert^2_2-\Vert \overline{z}^{k+1}_L-W^k_La^k_{L-1}-b^k_L\Vert^2_2=\Vert \overline{z}^{k+1}_L-z^k_L\Vert^2_2+(\overline{z}^{k+1}_L-W^k_La^k_{L-1}-b^k_L)^T(z^k_L-\overline{z}^{k+1}_L)$.\\
\indent According to the optimality condition of Equation \eqref{eq:update overline zl},
we have 
$\nabla_{\overline{z}^{k+1}_L}R(\overline{z}^{k+1}_L;y)+u^k+\rho(\overline{z}^{k+1}_L-W^k_La^k_{L-1}-b^k_L)=0$. Because $R(z_L;y)$ is convex and differentiable with regard to $z_L$, its subgradient is also its gradient. According to the definition of subgradient, we have
\begin{align}
    \nonumber R(z^k_L;y)&\geq R(\overline{z}^{k+1}_L;y)+\nabla_{\overline{z}^{k+1}_L}R^T(\overline{z}^{k+1}_L;y)(z^k_L-\overline{z}^{k+1}_L)\\&=R(\overline{z}^{k+1}_L;y)-(u^k+\rho(\overline{z}^{k+1}_L-W^k_La^k_{L-1}-b^k_L))^T(z^k_L-\overline{z}^{k+1}_L)
    \label{eq:zl subgradient}
\end{align}
We introduce Equation \eqref{eq:zl subgradient} into Equation \eqref{eq:cosine rule} to obtain Equation \eqref{eq:overline zl optimality}.\\
\indent Secondly, we focus on Inequality \eqref{eq:W optimality}. The stopping criterion of Algorithm \ref{algo:theta update} shows that
\begin{align}
    \phi(\textbf{W}^{k+1}_l,\textbf{b}^{k+1}_{l-1},\textbf{z}^{k+1}_{l-1},\textbf{a}^{k+1}_{l-1},u^k)\leq P_l(W^{k+1}_{l};\theta^{k+1}_{l}).\label{ineq:ineq1}
\end{align}
Because $\Omega_{W_l}(W_l)$ is convex, according to the definition of subgradient, we have
\begin{align}
  \Omega_l(\overline{W}^{k+1}_{l})\geq  \Omega_l({W}^{k+1}_l)+s^T(\overline{W}_l^{k+1}-W_l^{k+1}). \label{ineq:ineq2}
\end{align}
where $s$ is defined in the premise of Lemma \ref{lemma:lemma 1}. Therefore, we have
\begin{align*}
    & L_\rho({\textbf{W}}^{k+1}_{l-1},{\textbf{b}}^{k+1}_{l-1},{\textbf{z}}^{k+1}_{l-1},{\textbf{a}}^{k+1}_{l-1},u^k)-L_\rho({\textbf{W}}^{k+1}_{l},{\textbf{b}}^{k+1}_{l-1},{\textbf{z}}^{k+1}_{l-1},{\textbf{a}}^{k+1}_{l-1},u^k)\\&=\phi({\textbf{W}}^{k+1}_{l-1},{\textbf{b}}^{k+1}_{l-1},{\textbf{z}}^{k+1}_{l-1},{\textbf{a}}^{k+1}_{l-1},u^k)+\Omega_l(\overline{W}^{k+1}_l)-\phi({\textbf{W}}^{k+1}_{l},{\textbf{b}}^{k+1}_{l-1},{\textbf{z}}^{k+1}_{l-1},{\textbf{a}}^{k+1}_{l-1},u^k)-\Omega_l(W^{k+1}_{l})\ \text{(Definition of $L_\rho$)}\\&\geq \Omega_l(\overline{W}^{k+1}_l)-\Omega_l(W^{k+1}_{l})-\nabla \phi^T_{\overline{W}^{k+1}_l}(W^{k+1}_l-\overline{W}^{k+1}_l)-\Vert\theta_l^{k+1}\circ (W^{k+1}_l-\overline{W}_l^{k+1})^{\circ 2}\Vert_{1}/2(\text{Equation \eqref{ineq:ineq1}})\\&\geq s^T(\overline{W}^{k+1}_l-{W}^{k+1}_l)-\nabla \phi^T_{\overline{W}^{k+1}_l}(W^{k+1}_l-\overline{W}^{k+1}_l)-\Vert\theta_l^{k+1}\circ (W^{k+1}_l-\overline{W}_l^{k+1})^{\circ 2}\Vert_{1}/2(\text{Equation \eqref{ineq:ineq2}})\\ &=(s^T+\nabla \phi^T_{\overline{W}^{k+1}_l})(\overline{W}^{k+1}_l-W^{k+1}_l)-\Vert\theta_l^{k+1}\circ (W^{k+1}_l-\overline{W}_l^{k+1})^{\circ 2}\Vert_{1}/2\\&=\Vert\theta_l^{k+1}\circ (W^{k+1}_l-\overline{W}_l^{k+1})^{\circ 2}\Vert_{1}/2\ (\text{Lemma \ref{lemma:lemma 1}}).
\end{align*}
Thirdly, we focus on Inequality \eqref{eq:b optimality}.
\begin{align*}
    &L_\rho({\textbf{W}}^{k+1}_{l},{\textbf{b}}^{k+1}_{l-1},{\textbf{z}}^{k+1}_{l-1},{\textbf{a}}^{k+1}_{l-1},u^k)-L_\rho({\textbf{W}}^{k+1}_{l},{\textbf{b}}^{k+1}_{l},{\textbf{z}}^{k+1}_{l-1},{\textbf{a}}^{k+1}_{l-1},u^k)\\&=\phi({\textbf{W}}^{k+1}_{l},{\textbf{b}}^{k+1}_{l-1},{\textbf{z}}^{k+1}_{l-1},{\textbf{a}}^{k+1}_{l-1},u^k)-\phi({\textbf{W}}^{k+1}_{l},{\textbf{b}}^{k+1}_{l},{\textbf{z}}^{k+1}_{l-1},{\textbf{a}}^{k+1}_{l-1},u^k)\\&\geq (\nu/2)\Vert {b}^{k+1}_l-\overline{b}_l^{k+1}\Vert^2_2. (\text{Lemma \ref{lemma:lemma 2}})
\end{align*}
Finally, we focus on Equation \eqref{eq:z optimality}. This follows directly  from the optimality of $z^{k+1}_l$ in Equation \eqref{eq:update z}.
\end{proof}
\begin{lemma}
If $\rho>2H$  so that $C_1=\rho/2-H/2-H^2/\rho>0$, then it holds that
\begin{align}
\nonumber &L_\rho(\textbf{W}^{k+1},\textbf{b}^{k+1},\textbf{z}^{k+1}_{L-1},\textbf{a}^{k+1},u^k)-L_\rho(\textbf{W}^{k+1},\textbf{b}^{k+1},\textbf{z}^{k+1},\textbf{a}^{k+1},u^{k+1})\\&\geq  C_1\Vert z_L^{k+1}-\overline{z}_L^{k+1}\Vert^2_2-(H^2/\rho)\Vert\overline{z}_L^{k+1}-{z}_L^{k}\Vert^2_2. \label{eq: lemma4}  
\end{align}
\label{lemma:lemma 4}
\end{lemma}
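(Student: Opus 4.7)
The plan is to decompose the left-hand side into two "one-variable" increments of $L_\rho$. Let Part~A be the change from $z_L = \overline{z}_L^{k+1}$ (inside $\textbf{z}^{k+1}_{L-1}$) to $z_L = z_L^{k+1}$ with $u$ held at $u^k$, and let Part~B be the change from $u^k$ to $u^{k+1}$ with $z_L = z_L^{k+1}$. Part~B is immediate: only the term $u^T(z_L - W_L a_{L-1} - b_L)$ depends on $u$ in $L_\rho$, and the dual update $u^{k+1} = u^k + \rho r^{k+1}$ gives Part~B $= -\rho\Vert r^{k+1}\Vert_2^2 = -(1/\rho)\Vert u^{k+1} - u^k\Vert_2^2$.

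For Part~A I would write out the $z_L$-dependent pieces $R(z_L;y) + (u^k)^T(z_L - w) + (\rho/2)\Vert z_L - w\Vert_2^2$ with $w = W_L^{k+1} a_{L-1}^{k+1} + b_L^{k+1}$, then apply the cosine-rule identity $\Vert \overline{z}_L^{k+1} - w\Vert_2^2 - \Vert z_L^{k+1} - w\Vert_2^2 = \Vert \overline{z}_L^{k+1} - z_L^{k+1}\Vert_2^2 + 2(z_L^{k+1} - w)^T(\overline{z}_L^{k+1} - z_L^{k+1})$. Because $\rho(z_L^{k+1} - w) = u^{k+1} - u^k$, the rearranged expression consolidates the $u^k$-linear and cross terms into a single $(u^{k+1})^T(\overline{z}_L^{k+1} - z_L^{k+1})$. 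Next I would invoke the second inequality of Lemma~\ref{lemma: R(z_l) lipschitz} with $z_{L,1} = \overline{z}_L^{k+1}$, $z_{L,2} = z_L^{k+1}$ to lower-bound $R(\overline{z}_L^{k+1}) - R(z_L^{k+1})$, and use Lemma~\ref{lemma:z_l optimality} applied at iteration $k+1$ to substitute $\nabla_{z_L^{k+1}} R = -u^{k+1}$. The two $(u^{k+1})^T(\overline{z}_L^{k+1} - z_L^{k+1})$ pieces cancel cleanly, leaving Part~A $\geq (\rho/2 - H/2)\Vert z_L^{k+1} - \overline{z}_L^{k+1}\Vert_2^2$.

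To close, I would bound $\Vert u^{k+1} - u^k\Vert_2^2$ via Lemma~\ref{lemma:z_l optimality} at both $k$ and $k+1$: $u^{k+1} - u^k = \nabla R(z_L^k;y) - \nabla R(z_L^{k+1};y)$, whence Lipschitz differentiability yields $\Vert u^{k+1} - u^k\Vert_2^2 \leq H^2 \Vert z_L^{k+1} - z_L^k\Vert_2^2$. Splitting $z_L^{k+1} - z_L^k = (z_L^{k+1} - \overline{z}_L^{k+1}) + (\overline{z}_L^{k+1} - z_L^k)$ and applying a Young-style inequality then bounds $-(1/\rho)\Vert u^{k+1} - u^k\Vert_2^2$ from below by $-(H^2/\rho)$ times a weighted sum of the two target squared norms. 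Combining with the Part~A bound delivers the conclusion. The main obstacle is the careful bookkeeping at this last step: the coefficient $H/2$ of slack inside $\rho/2 - H/2$ on $\Vert z_L^{k+1} - \overline{z}_L^{k+1}\Vert_2^2$ must be spent to absorb the cross-term contribution from the Young split so that precisely $C_1 = \rho/2 - H/2 - H^2/\rho$ remains on one side and exactly $-H^2/\rho$ on the other; everything else is mechanical expansion of $L_\rho$ together with results already recorded in Lemmas~\ref{lemma:z_l optimality} and~\ref{lemma: R(z_l) lipschitz}.
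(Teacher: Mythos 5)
Your route is the same as the paper's: the decomposition into a $z_L$-increment at fixed $u^k$ plus a dual increment, the cosine rule, the substitution $\nabla_{z_L^{k+1}}R(z_L^{k+1};y)=-u^{k+1}$ from Lemma \ref{lemma:z_l optimality}, the descent inequality for $-R$ from Lemma \ref{lemma: R(z_l) lipschitz}, and the bound $\Vert u^{k+1}-u^k\Vert\le H\Vert z_L^{k+1}-z_L^{k}\Vert$ all appear, in the same order, in the paper's proof (which merely writes the two increments as one telescoping computation rather than as your Parts A and B). Everything through Part~A $\ge(\rho/2-H/2)\Vert z_L^{k+1}-\overline{z}_L^{k+1}\Vert^2_2$ is correct and matches the paper line for line.

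The step you flag as the ``main obstacle'' is a genuine obstacle, and your proposed resolution does not close it --- nor, in fact, does the paper's. You need $-(H^2/\rho)\Vert (z_L^{k+1}-\overline{z}_L^{k+1})+(\overline{z}_L^{k+1}-z_L^{k})\Vert^2_2$ to be bounded below by $-(H^2/\rho)\Vert\overline{z}_L^{k+1}-z_L^{k}\Vert^2_2$ plus something absorbable into the $\Vert z_L^{k+1}-\overline{z}_L^{k+1}\Vert^2_2$ slack. Any Young split $-2a^Tb\ge-\epsilon\Vert a\Vert^2-(1/\epsilon)\Vert b\Vert^2$ leaves coefficient $1+1/\epsilon>1$ on $\Vert b\Vert^2=\Vert\overline{z}_L^{k+1}-z_L^{k}\Vert^2_2$; the $H/2$ slack from Part~A multiplies the \emph{other} norm and cannot absorb this excess, so you cannot land on exactly $-H^2/\rho$ for the second term. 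The paper's printed proof hides the same difficulty by invoking ``triangle inequality'' to assert $\Vert a+b\Vert^2\le\Vert a\Vert^2+\Vert b\Vert^2$, which is false whenever $a^Tb>0$. What this argument honestly delivers is the lemma with $2H^2/\rho$ in place of $H^2/\rho$ in both affected places (i.e.\ $C_1=\rho/2-H/2-2H^2/\rho$ and $-(2H^2/\rho)\Vert\overline{z}_L^{k+1}-z_L^{k}\Vert^2_2$), which still suffices for the downstream sufficient-descent and convergence results once the threshold on $\rho$ is raised accordingly. So: same approach as the paper, sound through Part~A, and your instinct that the final bookkeeping is the crux is exactly right --- it is the one step at which both your plan and the paper's own proof fail to produce the constants as stated.
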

\begin{proof}
\begin{align*}
    &L_\rho(\textbf{W}^{k+1},\textbf{b}^{k+1},\textbf{z}^{k+1}_{L-1},\textbf{a}^{k+1},u^k)-L_\rho(\textbf{W}^{k+1},\textbf{b}^{k+1},\textbf{z}^{k+1},\textbf{a}^{k+1},u^{k+1})\\&=R(\overline{z}_L^{k+1};y)-R({z}_L^{k+1};y)+(u^{k+1})^T(\overline{z}_L^{k+1}-z_L^{k+1})+(\rho/2)\Vert z_L^{k+1}-\overline{z}_L^{k+1}\Vert^2_2-(1/\rho)\Vert u^{k+1}-u^k\Vert^2_2\\&=R(\overline{z}_L^{k+1};y)-R(z_L^{k+1};y)+\nabla_{z_L^{k+1}} R(z_L^{k+1};y)^T(z_L^{k+1}-\overline{z}_L^{k+1})+(\rho/2)\Vert z_L^{k+1}-\overline{z}_L^{k+1}\Vert^2_2-(1/\rho)\Vert u^{k+1}-u^k\Vert^2_2\text{( Lemma \ref{lemma:z_l optimality})}\\&\geq (-H/2)\Vert z_L^{k+1}-\overline{z}_L^{k+1}\Vert^2_2+(\rho/2)\Vert z_L^{k+1}-\overline{z}_L^{k+1}\Vert^2_2-(1/\rho)\Vert \nabla_{z_L^{k+1}} R(z_L^{k+1};y)-\nabla_{z_L^{k}} R(z_L^{k};y)\Vert^2_2\\&\text{($-R(z_L;y)$ is Lipschitz differentiable, Lemmas \ref{lemma:z_l optimality} and \ref{lemma: R(z_l) lipschitz})}\\&\geq (-H/2)\Vert z_L^{k+1}-\overline{z}_L^{k+1}\Vert^2_2+(\rho/2)\Vert z_L^{k+1}-\overline{z}_L^{k+1}\Vert^2_2-(H^2/\rho)\Vert z_L^{k+1}-{z}_L^{k}\Vert^2_2\text{(Assumption \ref{ass:assumption 2})}\\&\geq(-H/2)\Vert z_L^{k+1}-\overline{z}_L^{k+1}\Vert^2_2+(\rho/2)\Vert z_L^{k+1}-\overline{z}_L^{k+1}\Vert^2_2-(H^2/\rho)\Vert z_L^{k+1}-\overline{z}_L^{k+1}\Vert^2_2-(H^2/\rho)\Vert \overline{z}_L^{k+1}-{z}_L^{k}\Vert^2_2\text{(triangle inequality)}\\&=C_1\Vert z_L^{k+1}-\overline{z}_L^{k+1}\Vert^2_2-(H^2/\rho)\Vert\overline{z}_L^{k+1}-{z}_L^{k}\Vert^2_2.
\end{align*}
We choose $\rho>2H$ to make $C_1>0$.
\end{proof}
\section{Proof of Theorem \ref{thero: theorem 1}}
\label{sec: proof of theorem 1}
Proving Theorem \ref{thero: theorem 1} is equal to proving jointly Theorem \ref{thero: property 1}, \ref{thero: property 2}, and \ref{thero: property 3}, which are elaborated in the following.
\begin{theorem}
Given that Assumptions \ref{ass:assumption 1} and \ref{ass:assumption 2} hold, the dlADMM satisfies Property \ref{pro:property 1}.
\label{thero: property 1}
\end{theorem}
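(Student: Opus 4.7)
The plan is to exploit the optimality condition of the $z_L$-update from Lemma~\ref{lemma:z_l optimality} to eliminate the dual variable $u^k$ from the augmented Lagrangian, converting $L_\rho$ into an expression that can be lower bounded by $F$ evaluated at a feasible point of Problem~\ref{prob:problem 2}; Assumption~\ref{ass:assumption 2} (coerciveness of $F$ over $G$) will then supply both the required lower bound and, combined with Property~\ref{pro:property 2}, the boundedness of the iterates.

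First, I would use Lemma~\ref{lemma:z_l optimality} to write $u^k=-\nabla_{z_L^k} R(z_L^k;y)$, and apply the descent inequality from Lemma~\ref{lemma: R(z_l) lipschitz} with $z_{L,1}=W_L^k a_{L-1}^k+b_L^k$ and $z_{L,2}=z_L^k$ to obtain
\begin{align*}
&R(z_L^k;y)+(u^k)^{\!\top}(z_L^k-W_L^k a_{L-1}^k-b_L^k)\\
&\quad\geq R(W_L^k a_{L-1}^k+b_L^k;y)-(H/2)\Vert z_L^k-W_L^k a_{L-1}^k-b_L^k\Vert_2^2.
\end{align*}
Plugging this into the definition of $L_\rho^k$ and collecting it with the $(\rho/2)\Vert\cdot\Vert_2^2$ penalty yields, with $\tilde z_L^k:=W_L^k a_{L-1}^k+b_L^k$ and $\tilde{\textbf{z}}^k:=\{z_1^k,\ldots,z_{L-1}^k,\tilde z_L^k\}$,
\[
L_\rho^k\;\geq\;F(\textbf{W}^k,\textbf{b}^k,\tilde{\textbf{z}}^k,\textbf{a}^k)+\tfrac{\rho-H}{2}\Vert z_L^k-\tilde z_L^k\Vert_2^2.
\]
Because $(\textbf{W}^k,\textbf{b}^k,\tilde{\textbf{z}}^k,\textbf{a}^k)$ belongs to $G$ by construction and $F$ is coercive and continuous on $G$, it is bounded below on $G$ by some $F^\ast>-\infty$. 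With $\rho>2H>H$ this immediately gives $L_\rho^k\geq F^\ast$, which proves the lower-boundedness half of the claim.

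For the boundedness of iterates, I would chain the above inequality with Property~\ref{pro:property 2}, which guarantees $L_\rho^k\leq L_\rho^0$ for all $k$. This produces the uniform upper bounds $F(\textbf{W}^k,\textbf{b}^k,\tilde{\textbf{z}}^k,\textbf{a}^k)\leq L_\rho^0$ and $\Vert z_L^k-\tilde z_L^k\Vert_2^2\leq 2(L_\rho^0-F^\ast)/(\rho-H)$. Coerciveness of $F$ over $G$ then forces $(\textbf{W}^k,\textbf{b}^k,\tilde{\textbf{z}}^k,\textbf{a}^k)$ to be bounded, so $\textbf{W}^k$, $\textbf{b}^k$, $\textbf{a}^k$, $\{z_l^k\}_{l=1}^{L-1}$, and $\tilde z_L^k$ are all bounded; the remaining coordinate $z_L^k=\tilde z_L^k+(z_L^k-\tilde z_L^k)$ is the sum of two bounded sequences, and finally $\Vert u^k\Vert\leq \Vert\nabla R(0;y)\Vert+H\Vert z_L^k\Vert$ is bounded by Lipschitz differentiability, closing the argument.

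The main obstacle is engineering the correct lower bound in the first step. A naive route of completing the square in $u$ and using $\Vert u^k\Vert\leq \Vert\nabla R(0;y)\Vert+H\Vert z_L^k\Vert$ leaves behind a residual $-(H^2/\rho)\Vert z_L^k\Vert_2^2$ that only combines favorably with $R(z_L^k;y)$ if $R$ grows super-quadratically, which is not assumed. The key move is to substitute the identity $u^k=-\nabla R(z_L^k;y)$ directly inside the cross term and apply the descent lemma to $R$, so that the sum becomes an evaluation of $R$ at the feasible point $\tilde z_L^k$ with the mild cost $-(H/2)\Vert z_L^k-\tilde z_L^k\Vert_2^2$, which is then fully absorbed by the $(\rho/2)\Vert z_L^k-\tilde z_L^k\Vert_2^2$ coming from the augmented Lagrangian. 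This is precisely the source of the threshold $\rho>H$ (a fortiori $\rho>2H$) appearing in the statement.
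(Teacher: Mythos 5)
Your proposal is correct and follows essentially the same route as the paper: it substitutes $u^k=-\nabla_{z_L^k}R(z_L^k;y)$ from Lemma~\ref{lemma:z_l optimality} into the cross term, applies the descent inequality of Lemma~\ref{lemma: R(z_l) lipschitz} at the feasible point $W_L^ka_{L-1}^k+b_L^k$ to absorb the residual into the $(\rho/2)$-penalty, invokes coercivity of $F$ over $G$ for the lower bound, and then combines with the monotone decrease of $L_\rho$ to bound the iterates and hence $u^k$. Your write-up is in fact slightly more explicit than the paper's in recovering the boundedness of $z_L^k$ from that of $\tilde z_L^k$ and of the residual.
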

\begin{proof}
There exists $z_L^{'}$ such that $z_L^{'}-W^k_La^k_{L-1}-b^k_L=0$. By Assumption \ref{ass:assumption 2}, we have
\begin{align*}
    &F(\textbf{W}^k,\textbf{b}^k,\{z_l^k\}_{l=1}^{L-1},z^{'}_L,\textbf{a}^k)\geq \min S  > -\infty
\end{align*}
where $S=\{F(\textbf{W},\textbf{b},\textbf{z},\textbf{a}): z_L-W_La_{L-1}-b_L=0\}$. Then we have
\begin{align*}
    &L_\rho(\textbf{W}^k,\textbf{b}^k,\textbf{z}^k,\textbf{a}^k,u^k)\\&=F(\textbf{W}^k,\textbf{b}^k,\textbf{z}^k,\textbf{a}^k)+(u^k)^T(z^k_L-W^k_La^k_{L-1}-b^k_{L})+(\rho/2)\Vert z^k_L-W^k_La^k_{L-1}-b^k_{L}\Vert^2_2\\&=F(\textbf{W}^k,\textbf{b}^k,\textbf{z}^k,\textbf{a}^k)+(u^k)^T(z^k_L-z^{'}_L)+(\rho/2)\Vert z^k_L-W^k_La^k_{L-1}-b^k_{L}\Vert^2_2 \  \text{$(z_L^{'}-W^k_La^k_{L-1}-b^k_L=0)$}\\
    &=F(\textbf{W}^k,\textbf{b}^k,\textbf{z}^k,\textbf{a}^k)+\nabla_{z^k_L} R^T(z^k_L;y)(z^{'}_L-z^k_L)+(\rho/2)\Vert z^k_L-W^k_La^k_{L-1}-b^k_{L}\Vert^2_2\text{(Lemma \ref{lemma:z_l optimality})}\\&=\sum\nolimits_{l=1}^L \Omega_l(W^{k}_l)+(\nu/2)\sum\nolimits_{l=1}^{L-1}(\Vert z^k_l-W^k_la^k_{l-1}-b^k_l\Vert^2_2+\Vert a^k_l-f(z^k_l)\Vert^2_2)+R(z^k_L;y)+\nabla_{z^k_L} R^T(z^k_L;y)(z^{'}_L-z^k_L)\\&+(\rho/2)\Vert z^k_L-W^k_La^k_{L-1}-b^k_{L}\Vert^2_2\text{(The definition of $F$)}\\&\geq\sum\nolimits_{l=1}^L \Omega_l(W^{k}_l)+(\nu/2)\sum\nolimits_{l=1}^{L-1}(\Vert z^k_l-W^k_la^k_{l-1}-b^k_l\Vert^2_2+\Vert a^k_l-f(z^k_l)\Vert^2_2)+R(z^{'}_L;y)+(\rho-H/2)\Vert z^k_L-W^k_La^k_{L-1}-b^k_{L}\Vert^2_2\\&\text{(Lemmas \ref{lemma:z_l optimality} and \ref{lemma: R(z_l) lipschitz}, $R(z_L;y)$ is Lipschitz differentiable)}\\&> -\infty 
\end{align*}
It concludes from Lemma \ref{lemma:lemma 3} and Lemma \ref{lemma:lemma 4} that $L_\rho(\textbf{W}^k,\textbf{b}^k,\textbf{z}^k,\textbf{a}^k,u^k)$ is upper bounded by $L_\rho(\textbf{W}^0,\textbf{b}^0,\textbf{z}^0,\textbf{a}^0,\textbf{u}^0)$ and so are $\sum\nolimits_{l=1}^L \Omega_l(W^{k}_l)+(\nu/2)\sum\nolimits_{l=1}^{L-1}(\Vert z^k_l-W^k_la^k_{l-1}-b^k_l\Vert^2_2+\Vert a^k_l-f(z^k_l)\Vert^2_2)$ and $\Vert z^k_L-W^k_La^k_{L-1}-b^k_L\Vert^2_2$. By Assumption \ref{ass:assumption 2}, $(\textbf{W}^k,\textbf{b}^k,\textbf{z}^k,\textbf{a}^k)$ is bounded. By Lemma \ref{lemma:z_l optimality}, it is obvious that $u^k$ is bounded as well.
\end{proof}
\begin{theorem}
Given that Assumptions \ref{ass:assumption 1} and \ref{ass:assumption 2} hold, the dlADMM satisfies Property \ref{pro:property 2}.
\label{thero: property 2}
\end{theorem}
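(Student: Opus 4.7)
The plan is to telescope the difference $L_\rho(\textbf{W}^k,\textbf{b}^k,\textbf{z}^k,\textbf{a}^k,u^k)-L_\rho(\textbf{W}^{k+1},\textbf{b}^{k+1},\textbf{z}^{k+1},\textbf{a}^{k+1},u^{k+1})$ by inserting intermediate values of $L_\rho$ corresponding to every single subproblem update performed in one outer iteration of Algorithm \ref{algo:dlADMM}, and then to lower-bound each consecutive difference by a quadratic term using Lemma \ref{lemma:lemma 3}, with the single exception of the combined $z_L$-and-dual step, which is handled by Lemma \ref{lemma:lemma 4}.

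First, I would walk through the backward sweep (from $l=L$ down to $l=1$): starting at $L_\rho(\textbf{W}^k,\textbf{b}^k,\textbf{z}^k,\textbf{a}^k,u^k)$, successively replace $a_l\to\overline{a}_l^{k+1}$, $z_l\to\overline{z}_l^{k+1}$, $b_l\to\overline{b}_l^{k+1}$, $W_l\to\overline{W}_l^{k+1}$ for $l=L,L-1,\dots,1$, and at each step apply the corresponding descent inequality from Lemma \ref{lemma:lemma 3}, namely \eqref{eq:overline a optimality}--\eqref{eq:overline W optimality}. Then I would do the same for the forward sweep, replacing $W_l\to W_l^{k+1}$, $b_l\to b_l^{k+1}$, $z_l\to z_l^{k+1}$, $a_l\to a_l^{k+1}$ for $l=1,\dots,L$, and applying \eqref{eq:W optimality}, \eqref{eq:b optimality}, \eqref{eq:bl optimality}, \eqref{eq:z optimality} and \eqref{eq:a optimality}. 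Summing these contributions produces, on the right-hand side, a weighted sum of all the squared differences appearing in \eqref{eq: property2} except the two terms involving $z_L$, with coefficients drawn from $\{\overline{\theta}_l^{k+1}/2,\theta_l^{k+1}/2,\overline{\tau}_l^{k+1}/2,\tau_l^{k+1}/2,\nu/2\}$, each of which is $\geq C_2$ by definition.

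The main obstacle, and the only nontrivial point, is the treatment of $\overline{z}_L^{k+1}\to z_L^{k+1}$ together with the dual update $u^{k+1}=u^k+\rho r^{k+1}$, since the dual step actually \emph{increases} $L_\rho$ by $(1/\rho)\Vert u^{k+1}-u^k\Vert^2$. This is precisely where Lemma \ref{lemma:lemma 4} is used: it bundles the $z_L^{k+1}$ update and the $u^{k+1}$ update into a single inequality whose right-hand side is $C_1\Vert z_L^{k+1}-\overline{z}_L^{k+1}\Vert_2^2-(H^2/\rho)\Vert \overline{z}_L^{k+1}-z_L^k\Vert_2^2$. The negative term $-(H^2/\rho)\Vert \overline{z}_L^{k+1}-z_L^k\Vert_2^2$ is then absorbed by the descent \eqref{eq:overline zl optimality} from the $\overline{z}_L^{k+1}$ update, which contributes $(\rho/2)\Vert \overline{z}_L^{k+1}-z_L^k\Vert_2^2$. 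Since $\rho>2H$ implies $\rho/2-H^2/\rho\geq \rho/2-H/2-H^2/\rho=C_1>0$, the net coefficient on $\Vert \overline{z}_L^{k+1}-z_L^k\Vert_2^2$ is at least $C_1\geq C_2$, matching the form in \eqref{eq: property2}.

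Finally, I would collect all the resulting squared-difference terms, lower-bound every coefficient by $C_2$ as defined in the statement, and conclude the inequality \eqref{eq: property2}. The routine accounting of the telescoping sum and the coefficient minimum is mechanical; the only conceptually delicate piece is the cancellation described above between the dual ascent and the $\overline{z}_L$ descent, which is why Lemma \ref{lemma:lemma 4} (and hence the hypothesis $\rho>2H$) is indispensable.
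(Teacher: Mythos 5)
Your proposal is correct and follows essentially the same route as the paper, whose proof of this theorem is simply the observation that it follows from Lemma \ref{lemma:lemma 3} and Lemma \ref{lemma:lemma 4}; you have merely made explicit the telescoping over the backward and forward sweeps and the key cancellation of $-(H^2/\rho)\Vert\overline{z}_L^{k+1}-z_L^k\Vert_2^2$ against the $(\rho/2)\Vert\overline{z}_L^{k+1}-z_L^k\Vert_2^2$ term from \eqref{eq:overline zl optimality}, using $\rho/2-H^2/\rho\geq C_1\geq C_2$. No gaps.
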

\begin{proof}
This follows directly from Lemma \ref{lemma:lemma 3} and Lemma \ref{lemma:lemma 4}.
\end{proof}
\begin{theorem}
Given that Assumptions \ref{ass:assumption 1} and \ref{ass:assumption 2} hold, the dlADMM satisfies Property \ref{pro:property 3}.
\label{thero: property 3}
\end{theorem}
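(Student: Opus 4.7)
The plan is to compute an element of the subdifferential $\partial L_\rho$ at $(\textbf{W}^{k+1},\textbf{b}^{k+1},\textbf{z}^{k+1},\textbf{a}^{k+1},u^{k+1})$ coordinate by coordinate, and bound each coordinate in terms of the gap between the forward iterate and the backward iterate (plus the $\textbf{z}^{k+1}-\textbf{z}^k$ term for the dual). The key tool is that, by Property 1, all iterates lie in a bounded set, and on this bounded set $\nabla\phi$ is Lipschitz continuous with respect to each block, with some uniform constant $M$. So differences of the form $\nabla_{W_l}\phi(\textbf{W}^{k+1},\ldots)-\nabla_{\overline{W}^{k+1}_l}\phi(\textbf{W}^{k+1}_{l-1},\textbf{b}^{k+1}_{l-1},\textbf{z}^{k+1}_{l-1},\textbf{a}^{k+1}_{l-1},u^k)$ can be controlled by $M$ times a sum of the gap norms appearing on the right-hand side of Property 3.

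Concretely, I would start with the $W_l$ block. The desired subgradient component is $\nabla_{W^{k+1}_l}\phi(\textbf{W}^{k+1},\textbf{b}^{k+1},\textbf{z}^{k+1},\textbf{a}^{k+1},u^{k+1})+s$ for some $s\in\partial\Omega_l(W^{k+1}_l)$. Using the optimality relation from Lemma \ref{lemma:lemma 1} for the forward update, I substitute $s=-\nabla_{\overline{W}^{k+1}_l}\phi(\textbf{W}^{k+1}_{l-1},\textbf{b}^{k+1}_{l-1},\textbf{z}^{k+1}_{l-1},\textbf{a}^{k+1}_{l-1},u^k)-\theta^{k+1}_l\circ(W^{k+1}_l-\overline{W}^{k+1}_l)$, so the subgradient component equals the difference of two gradients of $\phi$ evaluated at nearby points plus the proximal-style correction $\theta^{k+1}_l\circ(W^{k+1}_l-\overline{W}^{k+1}_l)$. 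Lipschitz continuity of $\nabla_{W_l}\phi$ on the bounded set handles the first piece (producing a linear combination of $\|\textbf{W}^{k+1}-\overline{\textbf{W}}^{k+1}\|$, $\|\textbf{b}^{k+1}-\overline{\textbf{b}}^{k+1}\|$, $\|\textbf{z}^{k+1}-\overline{\textbf{z}}^{k+1}\|$, $\|\textbf{a}^{k+1}-\overline{\textbf{a}}^{k+1}\|$, and $\|u^{k+1}-u^k\|$), while the proximal term is already $\leq\|\theta^{k+1}_l\|_\infty\|\textbf{W}^{k+1}-\overline{\textbf{W}}^{k+1}\|$. I would carry out the same argument for the $\textbf{b}$, $\textbf{z}$, and $\textbf{a}$ blocks using the analogous optimality conditions (differentiable in $b$, differentiable in $a$, and subdifferentiable in $z$ due to the $R(z_L;y)$ term and the $f_l(z_l)$ coupling). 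For the dual block the gradient of $L_\rho$ in $u$ is simply the residual $r^{k+1}=(u^{k+1}-u^k)/\rho$, and by Lemma \ref{lemma:z_l optimality} this equals $(\nabla_{z^k_L}R(z^k_L;y)-\nabla_{z^{k+1}_L}R(z^{k+1}_L;y))/\rho$, which is at most $(H/\rho)\|\textbf{z}^{k+1}-\textbf{z}^k\|$ by Assumption \ref{ass:assumption 2}. This is precisely where the extra $\|\textbf{z}^{k+1}-\textbf{z}^k\|$ term in Property 3 comes from.

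Putting all block bounds together and collecting constants yields a single $C$ depending on $\nu$, $\rho$, $H$, the backtracking caps on $\overline{\theta}_l, \theta_l, \overline{\tau}_l, \tau_l$, and the Lipschitz moduli of $\nabla\phi$ on the bounded invariant set. Summing over layers gives the stated inequality with $g\in\partial L_\rho(\textbf{W}^{k+1},\textbf{b}^{k+1},\textbf{z}^{k+1},\textbf{a}^{k+1},u^{k+1})$.

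The main obstacle I expect is verifying that $\nabla\phi$ really is (block-)Lipschitz on the region containing all iterates. The $\phi$ expression has bilinear couplings $W_l a_{l-1}$, and the $\textbf{z}$ block is further complicated by the nonsmooth activation $f_l(z_l)$ hidden inside $\Vert a_l-f_l(z_l)\Vert^2$; for ReLU/leaky-ReLU this piecewise-smooth structure, combined with Assumption \ref{ass:assumption 1} giving a closed-form continuous $z$-update, is what makes the $\textbf{z}$-block subgradient argument go through. Handling these couplings carefully, and being precise about which point the subgradient is evaluated at versus which point the optimality condition was derived at, is the bookkeeping-heavy core of the argument; everything else is Lipschitz estimates and triangle inequalities absorbed into the constant $C$.
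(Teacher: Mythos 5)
Your proposal follows essentially the same route as the paper's proof: a block-by-block decomposition of $\partial L_\rho$ at the forward iterate, substitution of each subproblem's optimality condition (Lemma \ref{lemma:lemma 1} and its analogues) to isolate a difference of gradients of $\phi$ at nearby points plus a proximal correction, and control of the dual block via Lemma \ref{lemma:z_l optimality} and the Lipschitz differentiability of $R$, which is exactly where the $\Vert\textbf{z}^{k+1}-\textbf{z}^k\Vert$ term enters. The only cosmetic difference is that you package the bilinear-coupling estimates as block-Lipschitzness of $\nabla\phi$ on the bounded invariant set, whereas the paper carries out the same estimates explicitly via triangle and Cauchy--Schwarz inequalities using the boundedness from Property \ref{pro:property 1}.
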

\begin{proof}
We know that 
\begin{align*}
&\partial L_\rho(\textbf{W}^{k+1},\textbf{b}^{k+1},\textbf{z}^{k+1},\textbf{a}^{k+1},u^{k+1})\\&=(\partial_{\textbf{W}^{k+1}} L_\rho,\nabla_{\textbf{b}^{k+1}} L_\rho,\partial_{\textbf{z}^{k+1}} L_\rho,\nabla_{\textbf{a}^{k+1}}L_\rho,\nabla_{u^{k+1}} L_\rho)
\end{align*}
where $\partial_{\textbf{W}^{k+1}} L_\rho=\{\partial_{{W_l}^{k+1}} L_\rho\}_{l=1}^L$, $\nabla_{\textbf{b}^{k+1}} L_\rho=\{\nabla_{{b_l}^{k+1}}L_\rho\}_{l=1}^L$, $\partial_{\textbf{z}^{k+1}} L_\rho=\{\partial_{{z_l}^{k+1}} L_\rho\}_{l=1}^L$, and $\nabla_{\textbf{a}^{k+1}} L_\rho=\{\nabla_{{a_l}^{k+1}} L_\rho\}_{l=1}^{L-1}$ . To prove Property \ref{pro:property 3}, we need to give an upper bound of $\partial_{\textbf{W}^{k+1}} L_\rho,\nabla_{\textbf{b}^{k+1}} L_\rho,\partial_{\textbf{z}^{k+1}} L_\rho,\nabla_{\textbf{a}^{k+1}}L_\rho$ and $\nabla_{u^{k+1}} L_\rho$ by a linear combination of $\Vert \textbf{W}^{k+1}-\overline{\textbf{W}}^{k+1}\Vert$,  $\Vert\textbf{b}^{k+1}-\overline{\textbf{b}}^{k+1}\Vert$, $\Vert\textbf{z}^{k+1}-\overline{\textbf{z}}^{k+1}\Vert$, $\Vert\textbf{a}^{k+1}-\overline{\textbf{a}}^{k+1}\Vert$ and $\Vert \textbf{z}_l^{k+1}-\textbf{z}_l^{k}\Vert$.\\
\indent For $W^{k+1}_l(l<L)$,
\begin{align*}
    \partial_{W^{k+1}_l} L_\rho&=\partial\Omega_l(W^{k+1}_l)+\nabla _{W_l^{k+1}}\phi(\textbf{W}^{k+1},\textbf{b}^{k+1},\textbf{z}^{k+1},\textbf{a}^{k+1},u^{k+1})\\&=\partial\Omega_l(W^{k+1}_l)+\nu(W^{k+1}_la^{k+1}_{l-1}+b^{k+1}_l-z^{k+1}_l)(a^{k+1}_{l-1})^T\\&=\partial\Omega_l(W^{k+1}_l)+\nabla _{W_l^{k+1}}\phi(\textbf{W}^{k+1}_l,\textbf{b}^{k+1}_{l},\textbf{z}^{k+1}_{l},\textbf{a}^{k+1}_{l-1},u^{k})\\&=\partial\Omega_l(W^{k+1}_l)+\nabla _{\overline{W}_l^{k+1}}\phi(\textbf{W}^{k+1}_{l-1},\textbf{b}^{k+1}_{l-1},\textbf{z}^{k+1}_{l-1},\textbf{a}^{k+1}_{l-1},u^{k})+\theta^{k+1}_l\circ(W^{k+1}_l-\overline{W}^{k+1}_l)- \theta^{k+1}_l\circ(W^{k+1}_l-\overline{W}^{k+1}_l)\\&-\nabla _{\overline{W}_l^{k+1}}\phi(\textbf{W}^{k+1}_{l-1},\textbf{b}^{k+1}_{l-1},\textbf{z}^{k+1}_{l-1},\textbf{a}^{k+1}_{l-1},u^{k})+\nabla_{W_l^{k+1}}\phi(\textbf{W}^{k+1}_l,\textbf{b}^{k+1}_{l},\textbf{z}^{k+1}_{l},\textbf{a}^{k+1}_{l-1},u^{k})
    \\&=\partial\Omega_l(W^{k+1}_l)+\nabla_{\overline{W}_l^{k+1}}\phi(\textbf{W}^{k+1}_{l-1},\textbf{b}^{k+1}_{l-1},\textbf{z}^{k+1}_{l-1},\textbf{a}^{k+1}_{l-1},u^{k})+\theta^{k+1}_l\circ(W^{k+1}_l-\overline{W}^{k+1}_l)- \theta^{k+1}_l\circ(W^{k+1}_l-\overline{W}^{k+1}_l)\\&+\nu(W^{k+1}_la^{k+1}_{l-1}+b^{k+1}_l-z^{k+1}_l)(a^{k+1}_{l-1})^T-\nu(\overline{W}^{k+1}_{l}a^{k+1}_{l-1}+\overline{b}^{k+1}_l-\overline{z}^{k+1}_l)(a^{k+1}_{l-1})^T
\end{align*}
Because
\begin{align*}
&\Vert - \theta^{k+1}_l\circ(W^{k+1}_l-\overline{W}^{k+1}_l)+\nu(W^{k+1}_la^{k+1}_{l-1}+b^{k+1}_l-z^{k+1}_l)(a^{k+1}_{l-1})^T-\nu(\overline{W}^{k+1}_{l}a^{k+1}_{l-1}+\overline{b}^{k+1}_l-\overline{z}^{k+1}_l)(a^{k+1}_{l-1})^T\Vert\\&=\Vert - \theta^{k+1}_l\circ(W^{k+1}_l-\overline{W}^{k+1}_l)+\nu(W^{k+1}_l-\overline{W}^{k+1}_l)a^{k+1}_{l-1}(a^{k+1}_{l-1})^T+\nu(b^{k+1}_l-\overline{b}^{k+1}_l)(a^{k+1}_{l-1})^T-\nu(z^{k+1}_l-\overline{z}^{k+1}_l)(a^{k+1}_{l-1})^T\Vert\\&\leq\Vert \theta^{k+1}_l\circ(W^{k+1}_l-\overline{W}^{k+1}_l)\Vert+\nu\Vert(W^{k+1}_l-\overline{W}^{k+1}_l)a^{k+1}_{l-1}(a^{k+1}_{l-1})^T\Vert+\nu\Vert(b^{k+1}_l-\overline{b}^{k+1}_l)(a^{k+1}_{l-1})^T\Vert+\nu\Vert(z^{k+1}_l-\overline{z}^{k+1}_l)(a^{k+1}_{l-1})^T\Vert\\&\text{(triangle inequality)}\\&\leq\Vert \theta^{k+1}_l\circ (W^{k+1}_l-\overline{W}^{k+1}_l)\Vert+\nu\Vert W^{k+1}_l-\overline{W}^{k+1}_l\Vert\Vert a^{k+1}_{l-1}\Vert\Vert a^{k+1}_{l-1}\Vert+\nu\Vert b^{k+1}_l-\overline{b}^{k+1}_l\Vert\Vert a^{k+1}_{l-1}\Vert+\nu\Vert z^{k+1}_l-\overline{z}^{k+1}_l\Vert\Vert a^{k+1}_{l-1}\Vert\\& \text{(Cauchy-Schwarz inequality)}
\end{align*}
and the optimality condition of Equation \eqref{eq:update W} yields
\begin{align*}
    0&\in\partial\Omega_l(W^{k+1}_l)+\nabla_{\overline{W}_l^{k+1}}\phi(\textbf{W}^{k+1}_{l-1},\textbf{b}^{k+1}_{l-1},\textbf{z}^{k+1}_{l-1},\textbf{a}^{k+1}_{l-1},u^{k})+\theta^{k+1}_l\circ(W^{k+1}_l-\overline{W}^{k+1}_l)
\end{align*}
Because  $a^{k+1}_{l-1}$ is bounded by Property \ref{pro:property 1}, $\Vert\partial _{W_l^{k+1} }L_\rho\Vert$ can be upper bounded by a linear combination of  $\Vert W^{k+1}_l-\overline{W}^{k+1}_l\Vert$, $\Vert b^{k+1}_l-\overline{b}^{k+1}_l\Vert$ and $\Vert z^{k+1}_l-\overline{z}^{k+1}_l\Vert$.\\
\indent For $W^{k+1}_L$,
\begin{align*}
    \partial_{W^{k+1}_L} L_\rho&=\partial\Omega_L(W^{k+1}_L)+\nabla_{W_L^{k+1}}\phi(\textbf{W}^{k+1},\textbf{b}^{k+1},\textbf{z}^{k+1},\textbf{a}^{k+1},u^{k+1})\\&=\partial\Omega_L(W^{k+1}_L)+\nabla _{\overline{W}_L^{k+1}}\phi(\textbf{W}^{k+1}_{L-1},\textbf{b}^{k+1}_{L-1},\textbf{z}^{k+1}_{L-1},\textbf{a}^{k+1},u^{k})+\theta^{k+1}_L\circ(W^{k+1}_L-\overline{W}^{k+1}_L)- \theta^{k+1}_L\circ(W^{k+1}_L-\overline{W}^{k+1}_L)\\&-\nabla _{\overline{W}_L^{k+1}}\phi(\textbf{W}^{k+1}_{L-1},\textbf{b}^{k+1}_{L-1},\textbf{z}^{k+1}_{L-1},\textbf{a}^{k+1},u^{k})+\nabla_{W_L^{k+1}}\phi(\textbf{W}^{k+1},\textbf{b}^{k+1},\textbf{z}^{k+1},\textbf{a}^{k+1},u^{k+1})
    \\&=\partial\Omega_L(W^{k+1}_L)+\nabla_{W_L^{k+1}}\phi(\textbf{W}^{k+1}_{L-1},\textbf{b}^{k+1}_{L-1},\textbf{z}^{k+1}_{L-1},\textbf{a}^{k+1},u^{k})+\theta^{k+1}_L\circ(W^{k+1}_L-\overline{W}^{k+1}_L)- \theta^{k+1}_L\circ(W^{k+1}_L-\overline{W}^{k+1}_L)\\&+\rho(W^{k+1}_La^{k+1}_{L-1}+b^{k+1}_L-z^{k+1}_L-u^{k+1}/\rho)(a^{k+1}_{L-1})^T-\rho(\overline{W}^{k+1}_{L}a^{k+1}_{L-1}+\overline{b}^{k+1}_L-\overline{z}^{k+1}_L-u^k/\rho)(a^{k+1}_{L-1})^T
\end{align*}
Because
\begin{align*}
&\Vert - \theta^{k+1}_L\circ(W^{k+1}_L-\overline{W}^{k+1}_L)+\rho(W^{k+1}_La^{k+1}_{L-1}+b^{k+1}_L-z^{k+1}_L-u^{k+1}/\rho)(a^{k+1}_{L-1})^T-\rho(\overline{W}^{k+1}_{L}a^{k+1}_{L-1}+\overline{b}^{k+1}_L-\overline{z}^{k+1}_L-u^k/\rho)(a^{k+1}_{L-1})^T\Vert\\&=\Vert - \theta^{k+1}_L\circ(W^{k+1}_L-\overline{W}^{k+1}_L)+\rho(W^{k+1}_L-\overline{W}^{k+1}_L)a^{k+1}_{L-1}(a^{k+1}_{L-1})^T+\rho(b^{k+1}_L-\overline{b}^{k+1}_L)(a^{k+1}_{L-1})^T-\rho(z^{k+1}_L-\overline{z}^{k+1}_L)(a^{k+1}_{L-1})^T-(u^{k+1}-{u}^{k})(a^{k+1}_{L-1})^T\Vert\\&\leq\Vert \theta^{k+1}_L\circ(W^{k+1}_L-\overline{W}^{k+1}_L)\Vert+\rho\Vert(W^{k+1}_L-\overline{W}^{k+1}_L)a^{k+1}_{L-1}(a^{k+1}_{L-1})^T\Vert+\rho\Vert(b^{k+1}_L-\overline{b}^{k+1}_L)(a^{k+1}_{L-1})^T\Vert+\rho\Vert(z^{k+1}_L-\overline{z}^{k+1}_L)(a^{k+1}_{L-1})^T\Vert\\&+\Vert(u^{k+1}-{u}^{k})(a^{k+1}_{L-1})^T\Vert\text{(triangle inequality)}\\&\leq\Vert \theta^{k+1}_L\circ (W^{k+1}_L-\overline{W}^{k+1}_L)\Vert+\rho\Vert W^{k+1}_L-\overline{W}^{k+1}_L\Vert\Vert a^{k+1}_{L-1}\Vert\Vert a^{k+1}_{L-1}\Vert+\rho\Vert b^{k+1}_L-\overline{b}^{k+1}_L\Vert\Vert a^{k+1}_{L-1}\Vert+\rho\Vert z^{k+1}_L-\overline{z}^{k+1}_L\Vert\Vert a^{k+1}_{L-1}\Vert+H\Vert z_L^{k+1}-z_L^k\Vert \Vert a^{k+1}_{L-1}\Vert\\& \text{(Cauchy-Schwarz inequality, Lemma \ref{lemma:z_l optimality}, $R(z_L;y)$ is Lipschitz differentiable)}
\end{align*}
and the optimality condition of Equation \eqref{eq:update W} yields
\begin{align*}
    0&\in\partial\Omega_L(W^{k+1}_L)+\nabla \phi_{\overline{W}_L^{k+1}}(\textbf{W}^{k+1}_{L-1},\textbf{b}^{k+1}_{L-1},\textbf{z}^{k+1}_{L-1},\textbf{a}^{k+1},u^{k})+\theta^{k+1}_L\circ(W^{k+1}_L-\overline{W}^{k+1}_L)
\end{align*}
Because  $a^{k+1}_{L-1}$ is bounded by Property \ref{pro:property 1}, $\Vert\partial _{W_L^{k+1} }L_\rho\Vert$ can be upper bounded by a linear combination of  $\Vert W^{k+1}_L-\overline{W}^{k+1}_L\Vert$, $\Vert b^{k+1}_L-\overline{b}^{k+1}_L\Vert$, $\Vert z^{k+1}_L-\overline{z}^{k+1}_L\Vert$ and $\Vert z^{k+1}_L-{z}^{k}_L\Vert$.\\
\indent For $b^{k+1}_l(l<L)$,
\begin{align*}
    \nabla _{b^{k+1}_l}L_\rho&=\nabla _{b^{k+1}_l}\phi(\textbf{W}^{k+1},\textbf{b}^{k+1},\textbf{z}^{k+1},\textbf{a}^{k+1},u^{k+1})\\&=\nu(W^{k+1}_{l}a^{k+1}_{l-1}+b^{k+1}_l-z^{k+1}_l)\\&=\nabla_{b^{k+1}_l}\phi(\textbf{W}^{k+1}_l,\textbf{b}^{k+1}_{l},\textbf{z}^{k+1}_{l},\textbf{a}^{k+1}_{l},u^k)\\&=\nabla_{\overline{b}^{k+1}_l}\phi({\textbf{W}}^{k+1}_{l},{\textbf{b}}^{k+1}_{l-1},{\textbf{z}}^{k+1}_{l-1},{\textbf{a}}^{k+1}_{l-1},u^k)+\nu(b^{k+1}_l-\overline{b}^{k+1}_l)-\nu(b^{k+1}_l-\overline{b}^{k+1}_l)-\nabla_{\overline{b}^{k+1}_l}\phi({\textbf{W}}^{k+1}_{l},{\textbf{b}}^{k+1}_{l-1},{\textbf{z}}^{k+1}_{l-1},{\textbf{a}}^{k+1}_{l-1},u^k)\\&+\nabla_{b^{k+1}_l}\phi(\textbf{W}^{k+1}_l,\textbf{b}^{k+1}_{l},\textbf{z}^{k+1}_{l},\textbf{a}^{k+1}_{l},u^k)\\&=\nabla_{\overline{b}^{k+1}_l}\phi({\textbf{W}}^{k+1}_{l},{\textbf{b}}^{k+1}_{l-1},{\textbf{z}}^{k+1}_{l-1},{\textbf{a}}^{k+1}_{l-1},u^k)+\nu(b^{k+1}_l-\overline{b}^{k+1}_l)-\nu(b^{k+1}_l-\overline{b}^{k+1}_l)-\nu({W}^{k+1}_{l}a^{k+1}_{l-1}+\overline{b}^{k+1}_l-\overline{z}^{k+1}_l)\\&+\nu(W^{k+1}_la^{k+1}_{l-1}+b^{k+1}_l-z^{k+1}_l)\\&
    =\nabla_{\overline{b}^{k+1}_l}\phi({\textbf{W}}^{k+1}_{l},{\textbf{b}}^{k+1}_{l-1},{\textbf{z}}^{k+1}_{l-1},{\textbf{a}}^{k+1}_{l-1},u^k)+\nu(b^{k+1}_l-\overline{b}^{k+1}_l)+\nu(\overline{z}^{k+1}_l-z^{k+1}_l)
    \end{align*}
    The optimality condition of Equation \eqref{eq:update b} yields
    \begin{align*}
        0\in \nabla_{\overline{b}^{k+1}_l}\phi({\textbf{W}}^{k+1}_{l},{\textbf{b}}^{k+1}_{l-1},{\textbf{z}}^{k+1}_{l-1},{\textbf{a}}^{k+1}_{l-1},u^k)+\nu(b^{k+1}_l-\overline{b}^{k+1}_l)
    \end{align*}
    Therefore, $\Vert\nabla_{b^{k+1}_l} L_\rho\Vert$ is linearly independent on $\Vert z^{k+1}_l-\overline{z}^{k+1}_l\Vert$. \\
    \indent For $b^{k+1}_{L}$,
    \begin{align*}
    \nabla _{b^{k+1}_L}L_\rho&=\nabla_{b^{k+1}_L}\phi(\textbf{W}^{k+1},\textbf{b}^{k+1},\textbf{z}^{k+1},\textbf{a}^{k+1},u^{k+1})\\&=\nabla_{\overline{b}^{k+1}_L}\phi({\textbf{W}}^{k+1},{\textbf{b}}^{k+1}_{L-1},{\textbf{z}}^{k+1}_{L-1},{\textbf{a}}^{k+1},u^k)+\rho(b^{k+1}_L-\overline{b}^{k+1}_L)-\rho(b^{k+1}_L-\overline{b}^{k+1}_L)-\nabla_{\overline{b}^{k+1}_L}\phi({\textbf{W}}^{k+1},{\textbf{b}}^{k+1}_{L-1},{\textbf{z}}^{k+1}_{L-1},{\textbf{a}}^{k+1},u^k)\\&+\nabla_{b^{k+1}_L}\phi(\textbf{W}^{k+1},\textbf{b}^{k+1},\textbf{z}^{k+1},\textbf{a}^{k+1},u^{k+1})\\&=\nabla_{\overline{b}^{k+1}_L}\phi({\textbf{W}}^{k+1},{\textbf{b}}^{k+1}_{L-1},{\textbf{z}}^{k+1}_{L-1},{\textbf{a}}^{k+1},u^k)+\rho(b^{k+1}_L-\overline{b}^{k+1}_L)-\rho(b^{k+1}_L-\overline{b}^{k+1}_L)-\rho(W^{k+1}_La^{k+1}_L+\overline{b}^{k+1}_L-\overline{z}^{k+1}_L-u^k/\rho)\\&+\rho(W^{k+1}_La^{k+1}_L+{b}^{k+1}_L-{z}^{k+1}_L-u^{k+1}/\rho)\\&=\nabla_{\overline{b}^{k+1}_L}\phi({\textbf{W}}^{k+1},{\textbf{b}}^{k+1}_{L-1},{\textbf{z}}^{k+1}_{L-1},{\textbf{a}}^{k+1},u^k)+\rho(b^{k+1}_L-\overline{b}^{k+1}_L)+\rho(\overline{z}^{k+1}_L-z^{k+1}_L)+u^k-u^{k+1}
    \end{align*}
    Because
    \begin{align*}
        &\Vert\rho(\overline{z}^{k+1}_L-z^{k+1}_L)+u^k-u^{k+1}\Vert\\&\leq \rho\Vert\overline{z}^{k+1}_L-z^{k+1}_L\Vert+\Vert u^k-u^{k+1}\Vert\text{(triangle inequality)}\\&=\rho\Vert\overline{z}^{k+1}_L-z^{k+1}_L\Vert+\Vert \nabla_{z_L^{k}}R(z_L^{k};y)-\nabla_{z_L^{k+1}}R(z_L^{k+1};y)\Vert\text{(Lemma \ref{lemma:z_l optimality})}\\&\leq \rho \Vert\overline{z}^{k+1}_L-z^{k+1}_L\Vert+H\Vert z_L^{k}-z_L^{k+1}\Vert\text{($R(z_L;y)$ is Lipschitz differentiable)}
    \end{align*}
    and the optimality condition of Equation \eqref{eq:update bl} yields
    \begin{align*}
        0\in \nabla_{\overline{b}^{k+1}_L}\phi({\textbf{W}}^{k+1},{\textbf{b}}^{k+1}_{L-1},{\textbf{z}}^{k+1}_{L-1},{\textbf{a}}^{k+1},u^k)+\rho(b^{k+1}_L-\overline{b}^{k+1}_L)
    \end{align*}
    Therefore, $\Vert\nabla_{b^{k+1}_L} L_\rho\Vert$ is upper bounded by a combination of $\Vert z^{k+1}_L-\overline{z}^{k+1}_L\Vert$ and $\Vert z^{k+1}_L-{z}^{k}_L\Vert$. \\
    \indent For $z^{k+1}_l(l< L)$,
    \begin{align*}
        \partial_{z^{k+1}_l} L_\rho&=\partial_{z^{k+1}_l}\phi(\textbf{W}^{k+1},\textbf{b}^{k+1},\textbf{z}^{k+1},\textbf{a}^{k+1},u^{k+1})\\&=\nu(z^{k+1}_l-W^{k+1}_la^{k+1}_{l-1}-b^{k+1}_l)+\nu \partial f_l(z^{k+1}_l)\circ(f(z^{k+1}_l)-a^{k+1}_l)\\&=\partial_{z^{k+1}_l}\phi(\textbf{W}_l^{k+1},\textbf{b}_l^{k+1},\textbf{z}_l^{k+1},\textbf{a}_l^{k+1},u^{k})\\&=\partial_{z^{k+1}_l}\phi(\textbf{W}_l^{k+1},\textbf{b}_l^{k+1},\textbf{z}_l^{k+1},\textbf{a}_l^{k+1},u^{k})-\partial_{z^{k+1}_l}\phi(\textbf{W}_l^{k+1},\textbf{b}_l^{k+1},\textbf{z}_l^{k+1},\textbf{a}_{l-1}^{k+1},u^{k})+\partial_{z^{k+1}_l}\phi(\textbf{W}_l^{k+1},\textbf{b}_l^{k+1},\textbf{z}_l^{k+1},\textbf{a}_{l-1}^{k+1},u^{k})\\&=\nu\partial f_l(z^{k+1}_l)\circ(\overline{a}_l^{k+1}-a^{k+1}_l)+\partial_{z^{k+1}_l}\phi(\textbf{W}_l^{k+1},\textbf{b}_l^{k+1},\textbf{z}_l^{k+1},\textbf{a}_{l-1}^{k+1},u^{k})
    \end{align*}
    because $z^{k+1}_l$ is bounded and $f_l(z_l)$ is continuous and hence $f_l(z^{k+1}_l)$ is bounded, and the optimality condition of Equation \eqref{eq:update z} yields
    \begin{align*}
        0\in \partial_{z^{k+1}_l}\phi(\textbf{W}_l^{k+1},\textbf{b}_l^{k+1},\textbf{z}_l^{k+1},\textbf{a}_{l-1}^{k+1},u^{k})
    \end{align*}
    \indent Therefore, $\Vert \partial_{z^{k+1}_l}L_\rho\Vert$ is upper bounded by $\Vert a^{k+1}_l-\overline{a}^{k+1}_l\Vert$.\\
    \indent For $z^{k+1}_L$,
    \begin{align*}
        \nabla_{z^{k+1}_L}L_\rho&=\nabla_{z^{k+1}_L} R(z^{k+1}_L;y)+\nabla_{z^{k+1}_L}\phi(\textbf{W}^{k+1},\textbf{b}^{k+1},\textbf{z}^{k+1},\textbf{a}^{k+1},u^{k+1})\\&=\nabla_{z^{k+1}_L} R(z^{k+1}_L;y)+\nabla_{z^{k+1}_L}\phi(\textbf{W}^{k+1},\textbf{b}^{k+1},\textbf{z}^{k+1},\textbf{a}^{k+1},u^{k})-\nabla_{z^{k+1}_L}\phi(\textbf{W}^{k+1},\textbf{b}^{k+1},\textbf{z}^{k+1},\textbf{a}^{k+1},u^{k})\\&+\nabla_{z^{k+1}_L}\phi(\textbf{W}^{k+1},\textbf{b}^{k+1},\textbf{z}^{k+1},\textbf{a}^{k+1},u^{k+1})\\&=\nabla_{z^{k+1}_L} R(z^{k+1}_L;y)+\nabla_{z^{k+1}_L}\phi(\textbf{W}^{k+1},\textbf{b}^{k+1},\textbf{z}^{k+1},\textbf{a}^{k+1},u^{k})+u^{k+1}-u^k
    \end{align*}
    The optimality condition of Equation \eqref{eq:update zl} yields
    \begin{align*}
        0\in\nabla_{z^{k+1}_L} R(z^{k+1}_L;y)+\nabla_{z^{k+1}_L}\phi(\textbf{W}^{k+1},\textbf{b}^{k+1},\textbf{z}^{k+1},\textbf{a}^{k+1},u^{k})
    \end{align*}
     and
     \begin{align*}
             &\Vert u^{k+1}-u^k\Vert= \Vert \nabla_{z^{k+1}_L}R(z^{k+1}_L;y)-\nabla_{z^{k}_L}R(z^{k}_L;y)\Vert \ \text{(Lemma \ref{lemma:z_l optimality})}\leq H \Vert z^{k+1}_L-z^k_L\Vert\\&\text{(Cauchy-Schwarz inequality, $R(z_L;y)$ is Lipschitz differentiable)}
     \end{align*}
 Therefore $\Vert\nabla_{z^{k+1}_L}L_\rho\Vert$ is upper bounded by $\Vert z^{k+1}_L-z^{k}_L\Vert$.\\
\indent For $a^{k+1}_l(l<L-1)$,
\begin{align*}
    &\nabla_{a^{k+1}_l}\phi(\textbf{W}^{k+1},\textbf{b}^{k+1},\textbf{z}^{k+1},\textbf{a}^{k+1},u^{k+1})\\&=\nu (W^{k+1}_{l+1})^T(W^{k+1}_{l+1}a^{k+1}_l+b^{k+1}_{l+1}-z^{k+1}_{l+1})+\nu(a^{k+1}_l-f_l(z^{k+1}_l))\\&=\nabla_{a^{k+1}_l}\phi(\textbf{W}_{l+1}^{k+1},\textbf{b}^{k+1}_{l+1},\textbf{z}^{k+1}_{l+1},\textbf{a}^{k+1}_{l},u^{k})\\&=\tau^{k+1}_l\circ(a^{k+1}_l-\overline{a}^{k+1}_l)+\nabla_{\overline{a}^{k+1}_l}\phi(\textbf{W}_{l}^{k+1},\textbf{b}^{k+1}_{l},\textbf{z}^{k+1}_{l},\textbf{a}^{k+1}_{l-1},u^{k})-\tau^{k+1}_l\circ(a^{k+1}_l-\overline{a}^{k+1}_l)-\nabla_{\overline{a}^{k+1}_l}\phi(\textbf{W}_{l}^{k+1},\textbf{b}^{k+1}_{l},\textbf{z}^{k+1}_{l},\textbf{a}^{k+1}_{l-1},u^{k})\\&+\nabla_{a^{k+1}_l}\phi(\textbf{W}_{l+1}^{k+1},\textbf{b}^{k+1}_{l+1},\textbf{z}^{k+1}_{l+1},\textbf{a}^{k+1}_{l},u^{k})\\&=\tau^{k+1}_l\circ(a^{k+1}_l-\overline{a}^{k+1}_l)+\nabla_{\overline{a}^{k+1}_l}\phi(\textbf{W}_{l}^{k+1},\textbf{b}^{k+1}_{l},\textbf{z}^{k+1}_{l},\textbf{a}^{k+1}_{l-1},u^{k})-\tau^{k+1}_l\circ(a^{k+1}_l-\overline{a}^{k+1}_l)-\nu (\overline{W}^{k+1}_{l+1})^T(\overline{W}^{k+1}_{l+1}\overline{a}^{k+1}_l+\overline{b}^{k+1}_{l+1}-\overline{z}^{k+1}_{l+1})\\&-\nu(\overline{a}^{k+1}_l-f_l(z^{k+1}_l))+\nu (W^{k+1}_{l+1})^T(W^{k+1}_{l+1}a^{k+1}_l+b^{k+1}_{l+1}-z^{k+1}_{l+1})+\nu(a^{k+1}_l-f_l(z^{k+1}_l))
\end{align*}
Because
\begin{align*}
    &\Vert-\tau^{k+1}_l\circ(a^{k+1}_l-\overline{a}^{k+1}_l)-\nu (\overline{W}^{k+1}_{l+1})^T(\overline{W}^{k+1}_{l+1}\overline{a}^{k+1}_l+\overline{b}^{k+1}_{l+1}-\overline{z}^{k+1}_{l+1})-\nu(\overline{a}^{k+1}_l-f_l(z^{k+1}_l))\\&+\nu (W^{k+1}_{l+1})^T(W^{k+1}_{l+1}a^{k+1}_l+b^{k+1}_{l+1}-z^{k+1}_{l+1})+\nu(a^{k+1}_l-f_l(z^{k+1}_l))\Vert\\&\leq \Vert\tau^{k+1}_l\circ(a^{k+1}_l-\overline{a}^{k+1}_l)\Vert+\nu\Vert a^{k+1}_l-\overline{a}^{k+1}_l\Vert+\nu\Vert (W^{k+1}_{l+1})^Tb^{k+1}_{l+1}-(\overline{W}^{k+1}_{l+1})^T\overline{b}^{k+1}_{l+1} \Vert\\&+\nu\Vert (W^{k+1}_{l+1})^Tz^{k+1}_{l+1}-(\overline{W}^{k+1}_{l+1})^T\overline{z}^{k+1}_{l+1}\Vert +\nu\Vert (W^{k+1}_{l+1})^TW^{k+1}_{l+1}a^{k+1}_l-(\overline{W}^{k+1}_{l+1})^T\overline{W}^{k+1}_{l+1}\overline{a}^{k+1}_l\Vert\text{(triangle inequality)}
\end{align*}
Then we need to show that $\nu\Vert (W^{k+1}_{l+1})^Tb^{k+1}_{l+1}-(\overline{W}^{k+1}_{l+1})^T\overline{b}^{k+1}_{l+1} \Vert$, $\nu\Vert (W^{k+1}_{l+1})^Tz^{k+1}_{l+1}-(\overline{W}^{k+1}_{l+1})^T\overline{z}^{k+1}_{l+1} \Vert$, and $\nu\Vert (W^{k+1}_{l+1})^TW^{k+1}_{l+1}a^{k+1}_l-(\overline{W}^{k+1}_{l+1})^T\overline{W}^{k+1}_{l+1}\overline{a}^{k+1}_l\Vert$ are upper bounded by $\Vert \textbf{W}^{k+1}-\overline{\textbf{W}}^{k+1}\Vert$,  $\Vert\textbf{b}^{k+1}-\overline{\textbf{b}}^{k+1}\Vert$, $\Vert\textbf{z}^{k+1}-\overline{\textbf{z}}^{k+1}\Vert$, and $\Vert\textbf{a}^{k+1}-\overline{\textbf{a}}^{k+1}\Vert$.
\begin{align*}
    &\nu\Vert (W^{k+1}_{l+1})^Tb^{k+1}_{l+1}-(\overline{W}^{k+1}_{l+1})^T\overline{b}^{k+1}_{l+1} \Vert\\&=\nu\Vert (W^{k+1}_{l+1})^Tb^{k+1}_{l+1}-(\overline{W}^{k+1}_{l+1})^Tb^{k+1}_{l+1}+(\overline{W}^{k+1}_{l+1})^Tb^{k+1}_{l+1}-(\overline{W}^{k+1}_{l+1})^T\overline{b}^{k+1}_{l+1} \Vert\\&\leq \nu\Vert b^{k+1}_{l+1}\Vert\Vert W^{k+1}_{l+1}-\overline{W}^{k+1}_{l+1}\Vert+\nu\Vert \overline{W}^{k+1}_{l+1}\Vert\Vert b^{k+1}_{l+1}-\overline{b}^{k+1}_{l+1}\Vert\\&\text{(triangle inequality, Cauchy-Schwarz inequality)}
\end{align*}
Because $\Vert b^{k+1}_{l+1}\Vert$ and $\Vert\overline{W}^{k+1}_{l+1}\Vert$ are upper bounded, $\nu\Vert (W^{k+1}_{l+1})^Tb^{k+1}_{l+1}-(\overline{W}^{k+1}_{l+1})^T\overline{b}^{k+1}_{l+1} \Vert$ is therefore upper bounded by a combination of $\Vert W^{k+1}_{l+1}-\overline{W}^{k+1}_{l+1}\Vert$ and $\Vert b^{k+1}_{l+1}-\overline{b}^{k+1}_{l+1}\Vert$.\\
Similarly, $\nu\Vert (W^{k+1}_{l+1})^Tz^{k+1}_{l+1}-(\overline{W}^{k+1}_{l+1})^T\overline{z}^{k+1}_{l+1} \Vert$ is  upper bounded by a combination of $\Vert W^{k+1}_{l+1}-\overline{W}^{k+1}_{l+1}\Vert$ and $\Vert z^{k+1}_{l+1}-\overline{z}^{k+1}_{l+1}\Vert$.\\
\begin{align*}
    &\nu\Vert (W^{k+1}_{l+1})^TW^{k+1}_{l+1}a^{k+1}_l-(\overline{W}^{k+1}_{l+1})^T\overline{W}^{k+1}_{l+1}\overline{a}^{k+1}_l\Vert\\&=\nu\Vert(W^{k+1}_{l+1})^TW^{k+1}_{l+1}a^{k+1}_l-(W^{k+1}_{l+1})^TW^{k+1}_{l+1}\overline{a}^{k+1}_l+(W^{k+1}_{l+1})^TW^{k+1}_{l+1}\overline{a}^{k+1}_l-(W^{k+1}_{l+1})^T\overline{W}^{k+1}_{l+1}\overline{a}^{k+1}_l+(W^{k+1}_{l+1})^T\overline{W}^{k+1}_{l+1}\overline{a}^{k+1}_l-(\overline{W}^{k+1}_{l+1})^T\overline{W}^{k+1}_{l+1}\overline{a}^{k+1}_l\Vert\\&\leq \nu\Vert(W^{k+1}_{l+1})^TW^{k+1}_{l+1}(a^{k+1}_l-\overline{a}^{k+1}_l)\Vert+\nu\Vert (W^{k+1}_{l+1})^T(W^{k+1}_{l+1}-\overline{W}^{k+1}_{l+1})\overline{a}^{k+1}_l\Vert+\nu\Vert(W^{k+1}_{l+1}-\overline{W}^{k+1}_{l+1})^T\overline{W}^{k+1}_{l+1}\overline{a}^{k+1}_l\Vert\text{(triangle inequality)}\\&\leq \nu\Vert W^{k+1}_{l+1}\Vert\Vert W^{k+1}_{l+1}\Vert\Vert a^{k+1}_l-\overline{a}^{k+1}_l\Vert+\nu\Vert W^{k+1}_{l+1}\Vert\Vert W^{k+1}_{l+1}-\overline{W}^{k+1}_{l+1}\Vert\Vert\overline{a}^{k+1}_l\Vert+\nu\Vert W^{k+1}_{l+1}-\overline{W}^{k+1}_{l+1}\Vert\Vert\overline{W}^{k+1}_{l+1}\Vert\Vert\overline{a}^{k+1}_l\Vert\text{(Cauchy-Schwarz inequality)}
\end{align*}
Because $\Vert W^{k+1}_{l+1}\Vert$, $\Vert \overline{W}^{k+1}_{l+1}\Vert$  and $\Vert \overline{a}^{k+1}_l\Vert$ are upper bounded, $\nu\Vert (W^{k+1}_{l+1})^TW^{k+1}_{l+1}a^{k+1}_l-(\overline{W}^{k+1}_{l+1})^T\overline{W}^{k+1}_{l+1}\overline{a}^{k+1}_l\Vert$ is therefore upper bounded by a combination of $\Vert W^{k+1}_{l+1}-\overline{W}^{k+1}_{l+1}\Vert$ and $\Vert a^{k+1}_{l+1}-\overline{a}^{k+1}_{l+1}\Vert$.\\
\indent For $a^{k+1}_{L-1}$, 
\begin{align*}
    &\nabla_{a^{k+1}_{L-1}}\phi(\textbf{W}^{k+1},\textbf{b}^{k+1},\textbf{z}^{k+1},\textbf{a}^{k+1},u^{k+1})\\&=\tau^{k+1}_{L-1}\circ(a^{k+1}_{L-1}-\overline{a}^{k+1}_{L-1})+\nabla_{\overline{a}^{k+1}_{L-1}}\phi(\textbf{W}_{L-1}^{k+1},\textbf{b}^{k+1}_{L-1},\textbf{z}^{k+1}_{L-1},\textbf{a}^{k+1}_{L-2},u^{k})-\tau^{k+1}_{L-1}\circ(a^{k+1}_{L-1}-\overline{a}^{k+1}_{L-1})-\nabla_{\overline{a}^{k+1}_{L-1}}\phi(\textbf{W}_{L-1}^{k+1},\textbf{b}^{k+1}_{L-1},\textbf{z}^{k+1}_{L-1},\textbf{a}^{k+1}_{L-2},u^{k})\\&+\nabla_{a^{k+1}_{L-1}}\phi(\textbf{W}^{k+1},\textbf{b}^{k+1},\textbf{z}^{k+1},\textbf{a}^{k+1},u^{k+1})\\&=\tau^{k+1}_{L-1}\circ(a^{k+1}_{L-1}-\overline{a}^{k+1}_{L-1})+\nabla_{\overline{a}^{k+1}_{L-1}}\phi(\textbf{W}_{L-1}^{k+1},\textbf{b}^{k+1}_{L-1},\textbf{z}^{k+1}_{L-1},\textbf{a}^{k+1}_{L-2},u^{k})-\tau^{k+1}_{L-1}\circ(a^{k+1}_{L-1}-\overline{a}^{k+1}_{L-1})-\rho (\overline{W}^{k+1}_{L})^T(\overline{W}^{k+1}_{L}\overline{a}^{k+1}_{L-1}+\overline{b}^{k+1}_{L}-\overline{z}^{k+1}_{L}-u^k/\rho)\\&-\nu(\overline{a}^{k+1}_{L-1}-f_{L-1}(z^{k+1}_{L-1}))+\rho (W^{k+1}_{L})^T(W^{k+1}_{L}a^{k+1}_{L-1}+b^{k+1}_{L}-z^{k+1}_{L}-u^{k+1}/\rho)+\nu(a^{k+1}_{L-1}-f_{L-1}(z^{k+1}_{L-1}))
\end{align*}
Because
\begin{align*}
    &\Vert-\tau^{k+1}_{L-1}\circ(a^{k+1}_{L-1}-\overline{a}^{k+1}_{L-1})-\rho (\overline{W}^{k+1}_{L})^T(\overline{W}^{k+1}_{L}\overline{a}^{k+1}_{L-1}+\overline{b}^{k+1}_{L}-\overline{z}^{k+1}_{L}-u^k/\rho)-\nu(\overline{a}^{k+1}_{L-1}-f_{L-1}(z^{k+1}_{L-1}))\\&+\rho (W^{k+1}_{L})^T(W^{k+1}_{L}a^{k+1}_{L-1}+b^{k+1}_{L}-z^{k+1}_{L}-u^{k+1}/\rho)+\nu(a^{k+1}_{L-1}-f_{L-1}(z^{k+1}_{L-1}))\Vert\\&\leq \Vert\tau^{k+1}_{L-1}\circ(a^{k+1}_{L-1}-\overline{a}^{k+1}_{L-1})\Vert+\nu\Vert a^{k+1}_{L-1}-\overline{a}^{k+1}_{L-1}\Vert+\rho\Vert (W^{k+1}_{L})^Tb^{k+1}_{L}-(\overline{W}^{k+1}_{L})^T\overline{b}^{k+1}_{L} \Vert\\&+\rho\Vert (W^{k+1}_{L})^Tz^{k+1}_{L}-(\overline{W}^{k+1}_{L})^T\overline{z}^{k+1}_{L}\Vert+\Vert (W^{k+1}_{L})^Tu^{k+1}-(\overline{W}^{k+1}_{L})^T{u}^{k}\Vert \\&+\rho\Vert (W^{k+1}_{L})^TW^{k+1}_{L}a^{k+1}_{L-1}-(\overline{W}^{k+1}_{L})^T\overline{W}^{k+1}_{L}\overline{a}^{k+1}_{L-1}\Vert\text{(triangle inequality)}
\end{align*}
Then we need to show that $\rho\Vert (W^{k+1}_{L})^Tb^{k+1}_{L}-(\overline{W}^{k+1}_{L})^T\overline{b}^{k+1}_{L} \Vert$, $\rho\Vert (W^{k+1}_{L})^Tz^{k+1}_{L}-(\overline{W}^{k+1}_{L})^T\overline{z}^{k+1}_{L}\Vert$, $\Vert (W^{k+1}_{L})^Tu^{k+1}-(\overline{W}^{k+1}_{L})^T{u}^{k}\Vert$ and $\rho\Vert (W^{k+1}_{L})^TW^{k+1}_{L}a^{k+1}_{L-1}-(\overline{W}^{k+1}_{L})^T\overline{W}^{k+1}_{L}\overline{a}^{k+1}_{L-1}\Vert$  are upper bounded by $\Vert \textbf{W}^{k+1}-\overline{\textbf{W}}^{k+1}\Vert$,  $\Vert\textbf{b}^{k+1}-\overline{\textbf{b}}^{k+1}\Vert$, $\Vert\textbf{z}^{k+1}-\overline{\textbf{z}}^{k+1}\Vert$, $\Vert\textbf{a}^{k+1}-\overline{\textbf{a}}^{k+1}\Vert$ and $\Vert\textbf{z}^{k+1}-\textbf{z}^{k}\Vert$.
\begin{align*}
    &\rho\Vert (W^{k+1}_{L})^Tb^{k+1}_{L}-(\overline{W}^{k+1}_{L})^T\overline{b}^{k+1}_{L} \Vert\\&=\rho\Vert (W^{k+1}_{L})^Tb^{k+1}_{L}-(\overline{W}^{k+1}_{L})^Tb^{k+1}_{L}+(\overline{W}^{k+1}_{L})^Tb^{k+1}_{L}-(\overline{W}^{k+1}_{L})^T\overline{b}^{k+1}_{L} \Vert\\&\leq \rho\Vert b^{k+1}_{L}\Vert\Vert W^{k+1}_{L}-\overline{W}^{k+1}_{L}\Vert+\rho\Vert \overline{W}^{k+1}_{L}\Vert\Vert b^{k+1}_{L}-\overline{b}^{k+1}_{L}\Vert\\&\text{(triangle inequality, Cauchy-Schwarz inequality)}
\end{align*}
Because $\Vert b^{k+1}_{L}\Vert$ and $\Vert\overline{W}^{k+1}_{L}\Vert$ are upper bounded, $\rho\Vert (W^{k+1}_{L})^Tb^{k+1}_{L}-(\overline{W}^{k+1}_{L})^T\overline{b}^{k+1}_{L} \Vert$ is therefore upper bounded by a combination of $\Vert W^{k+1}_{L}-\overline{W}^{k+1}_{L}\Vert$ and $\Vert b^{k+1}_{L}-\overline{b}^{k+1}_{L}\Vert$.\\
Similarly, $\rho\Vert (W^{k+1}_{L})^Tz^{k+1}_{L}-(\overline{W}^{k+1}_{L})^T\overline{z}^{k+1}_{L} \Vert$ is  upper bounded by a combination of $\Vert W^{k+1}_{L}-\overline{W}^{k+1}_{L}\Vert$ and $\Vert z^{k+1}_{L}-\overline{z}^{k+1}_{L}\Vert$.\\
\begin{align*}
    &\Vert (W^{k+1}_L)^Tu^{k+1}-(\overline{W}^{k+1}_L)^Tu^{k}\Vert\\&=\Vert(W^{k+1}_L)^Tu^{k+1}-(W^{k+1}_L)^Tu^{k}+(W^{k+1}_L)^Tu^{k}-(\overline{W}^{k+1}_L)^Tu^{k}\Vert\\&\leq \Vert(W^{k+1}_L)^T(u^{k+1}-u^{k})\Vert+\Vert(W^{k+1}_L-\overline{W}^{k+1}_L)^Tu^{k}\Vert\text{(triangle inequality)}\\&\leq\Vert W^{k+1}_L\Vert\Vert u^{k+1}-u^{k}\Vert+\Vert W^{k+1}_L-\overline{W}^{k+1}_L\Vert \Vert u^{k}\Vert\text{(Cauthy-Schwarz inequality)}\\&=\Vert W^{k+1}_L\Vert\Vert \nabla_{z^{k+1}_L} R(z^{k+1}_L;y)-\nabla_{z^{k}_L}R(z^{k}_L;y)\Vert+\Vert W^{k+1}_L-\overline{W}^{k+1}_L\Vert \Vert u^{k}\Vert \ \text{(Lemma \ref{lemma:z_l optimality})}\\&\leq H\Vert W^{k+1}_L\Vert\Vert z^{k+1}_L-z^{k}_L\Vert+\Vert W^{k+1}_L-\overline{W}^{k+1}_L\Vert\Vert u^{k}\Vert\\&\text{($R(z_L;y)$ is Lipschitz differentiable)}
\end{align*}
Because $\Vert W^{k+1}_L\Vert$ and $\Vert u^k\Vert$ are bounded, $\Vert (W^{k+1}_L)^Tu^{k+1}-(\overline{W}^{k+1}_L)^Tu^{k}\Vert$ is upper bounded by a combination of $\Vert z^{k+1}_L-z^{k}_L\Vert$ and $\Vert W^{k+1}_L-\overline{W}^{k+1}_L\Vert$.
\begin{align*}
    &\rho\Vert (W^{k+1}_{L})^TW^{k+1}_{L}a^{k+1}_{L-1}-(\overline{W}^{k+1}_{L})^T\overline{W}^{k+1}_{L}\overline{a}^{k+1}_{L-1}\Vert\\&=\rho\Vert(W^{k+1}_{L})^TW^{k+1}_{L}a^{k+1}_{L-1}-(W^{k+1}_{L})^TW^{k+1}_{L}\overline{a}^{k+1}_{L-1}+(W^{k+1}_{L})^TW^{k+1}_{L}\overline{a}^{k+1}_{L-1}\\&-(W^{k+1}_{L})^T\overline{W}^{k+1}_{L}\overline{a}^{k+1}_{L-1}+(W^{k+1}_{L})^T\overline{W}^{k+1}_{L}\overline{a}^{k+1}_{L-1}-(\overline{W}^{k+1}_{L})^T\overline{W}^{k+1}_{L}\overline{a}^{k+1}_{L-1}\Vert\\&\leq \rho\Vert(W^{k+1}_{L})^TW^{k+1}_{L}(a^{k+1}_{L-1}-\overline{a}^{k+1}_{L-1})\Vert+\rho\Vert (W^{k+1}_{L})^T(W^{k+1}_{L}-\overline{W}^{k+1}_{L})\overline{a}^{k+1}_{L-1}\Vert+\rho\Vert(W^{k+1}_{L}-\overline{W}^{k+1}_{L})^T\overline{W}^{k+1}_{L}\overline{a}^{k+1}_{L-1}\Vert\text{(triangle inequality)}\\&\leq \rho\Vert W^{k+1}_{L}\Vert\Vert W^{k+1}_{L}\Vert\Vert a^{k+1}_{L-1}-\overline{a}^{k+1}_{L-1}\Vert+\rho\Vert W^{k+1}_{L}\Vert\Vert W^{k+1}_{L}-\overline{W}^{k+1}_{L}\Vert\Vert\overline{a}^{k+1}_{L-1}\Vert+\rho\Vert W^{k+1}_{L}-\overline{W}^{k+1}_{L}\Vert\Vert\overline{W}^{k+1}_{L}\Vert\Vert\overline{a}^{k+1}_{L-1}\Vert\text{(Cauchy-Schwarz inequality)}
\end{align*}
Because $\Vert W^{k+1}_{L}\Vert$, $\Vert \overline{W}^{k+1}_{L}\Vert$  and $\Vert \overline{a}^{k+1}_{L-1}\Vert$ are upper bounded, $\rho\Vert (W^{k+1}_{L})^TW^{k+1}_{L}a^{k+1}_{L-1}-(\overline{W}^{k+1}_{L})^T\overline{W}^{k+1}_{L}\overline{a}^{k+1}_{L-1}\Vert$ is therefore upper bounded by a combination of $\Vert W^{k+1}_{L}-\overline{W}^{k+1}_{L}\Vert$ and $\Vert a^{k+1}_{L}-\overline{a}^{k+1}_{L}\Vert$.\\
\indent For $u^{k+1}$,
\begin{align*}
    \nabla_{u^{k+1}_l} L_\rho&=\nabla_{u^{k+1}_l}\phi(\textbf{W}^{k+1},\textbf{b}^{k+1},\textbf{z}^{k+1},\textbf{a}^{k+1},{u}^{k+1})\\&=z^{k+1}_L-W^{k+1}_La^{k+1}_L-b^{k+1}_L\\&=(1/\rho) (u^{k+1}-u^k)\\&=(1/\rho)(\nabla_{z^{k}_L}R(z^{k}_L;y)-\nabla_{z^{k+1}_L}R(z^{k+1}_L;y))\text{(Lemma \ref{lemma:z_l optimality})}
\end{align*}
Because 
\begin{align*}
    &\Vert(1/\rho)(\nabla_{z^{k}_L}R(z^{k}_L;y)-\nabla_{z^{k+1}_L}R(z^{k+1}_L;y))\Vert\\&\leq (H/\rho)\Vert z^{k+1}_L-z^{k}_L\Vert\text{($R(z_L;y)$ is Lipschitz differentiable)}
\end{align*}
Therefore, $\Vert \nabla_{u^{k+1}_l} L_\rho\Vert$ is upper bounded by $\Vert z^{k+1}_L-z^{k}_L\Vert$.
\end{proof}
\section{Proof of Theorem \ref{thero: theorem 3}}
\label{sec:convergence rate}
\begin{proof}
To prove this theorem, we will first show that $c_k$ satisfies two conditions: (1). $c_k\geq c_{k+1}$. (2). $\sum\nolimits_{k=0}^\infty c_k$ is bounded.  We then conclude the convergence rate of $o(1/k)$ based on these two conditions. Specifically, first, we have
\begin{align*}
    c_k&=\min\nolimits_{0\leq i\leq k}(\sum\nolimits_{l=1}^L(\Vert \overline{W}_l^{i+1}-W_l^i\Vert^2_2+\Vert {W}_l^{i+1}-\overline{W}_l^{i+1}\Vert^2_2+\Vert \overline{b}_l^{i+1}-b_l^i\Vert^2_2+\Vert {b}_l^{i+1}-\overline{b}_l^{i+1}\Vert^2_2) +\sum\nolimits_{l=1}^{L-1}(\Vert \overline{a}_l^{i+1}-a_l^i\Vert^2_2+\Vert {a}_l^{i+1}-\overline{a}_l^{i+1}\Vert^2_2)\\&+\Vert \overline{z}^{i+1}_L-{z}^{i}_L\Vert^2_2+\Vert z^{i+1}_L-\overline{z}^{i+1}_L\Vert^2_2) \\&\geq\min\nolimits_{0\leq i\leq k+1}(\sum\nolimits_{l=1}^L\Vert \overline{W}_l^{i+1}-W_l^i\Vert^2_2+\Vert {W}_l^{i+1}-\overline{W}_l^{i+1}\Vert^2_2+\Vert \overline{b}_l^{i+1}-b_l^i\Vert^2_2+\Vert {b}_l^{i+1}-\overline{b}_l^{i+1}\Vert^2_2) +\sum\nolimits_{l=1}^{L-1}(\Vert \overline{a}_l^{i+1}-a_l^i\Vert^2_2+\Vert {a}_l^{i+1}-\overline{a}_l^{i+1}\Vert^2_2)\\&+\Vert \overline{z}^{i+1}_L-{z}^{i}_L\Vert^2_2+\Vert z^{i+1}_L-\overline{z}^{i+1}_L\Vert^2_2)\\&= c_{k+1}
\end{align*}
Therefore $c_k$ satisfies the first condition. Second,
\begin{align*}
    &\sum\nolimits_{k=0}^\infty c_k\\&=\sum\nolimits_{k=0}^\infty\min\nolimits_{0\leq i\leq k}(\sum\nolimits_{l=1}^L(\Vert \overline{W}_l^{i+1}-W_l^i\Vert^2_2+\Vert {W}_l^{i+1}-\overline{W}_l^{i+1}\Vert^2_2+\Vert \overline{b}_l^{i+1}-b_l^i\Vert^2_2+\Vert {b}_l^{i+1}-\overline{b}_l^{i+1}\Vert^2_2) +\sum\nolimits_{l=1}^{L-1}(\Vert \overline{a}_l^{i+1}-a_l^i\Vert^2_2+\Vert {a}_l^{i+1}-\overline{a}_l^{i+1}\Vert^2_2)\\&+\Vert \overline{z}^{i+1}_L-{z}^{i}_L\Vert^2_2+\Vert z^{i+1}_L-\overline{z}^{i+1}_L\Vert^2_2)\\&\leq \sum\nolimits_{k=0}^\infty(\sum\nolimits_{l=1}^L(\Vert \overline{W}_l^{k+1}-W_l^k\Vert^2_2+\Vert {W}_l^{k+1}-\overline{W}_l^{k+1}\Vert^2_2+\Vert \overline{b}_l^{k+1}-b_l^k\Vert^2_2+\Vert {b}_l^{k+1}-\overline{b}_l^{k+1}\Vert^2_2) +\sum\nolimits_{l=1}^{L-1}(\Vert \overline{a}_l^{k+1}-a_l^k\Vert^2_2+\Vert {a}_l^{k+1}-\overline{a}_l^{k+1}\Vert^2_2)\\&+\Vert \overline{z}^{k+1}_L-{z}^{k}_L\Vert^2_2+\Vert z^{k+1}_L-\overline{z}^{k+1}_L\Vert^2_2) \\&\leq (L_\rho(\textbf{W}^0,\textbf{b}^0,\textbf{z}^0,\textbf{a}^0,u^0)-L_\rho(\textbf{W}^*,\textbf{b}^*,\textbf{z}^*,\textbf{a}^*,u^*))/C_3\text{(Property \ref{pro:property 2})}
\end{align*}
So $\sum\nolimits_{k=0}^\infty c_k$ is bounded and $c_{k}$ satisfies the second condition. Finally, it has been proved that the sufficient conditions of convergence rate $o(1/k)$ are: (1) $c_k\geq c_{k+1}$, and (2) $\sum\nolimits_{k=0}^\infty c_k$ is bounded, and (3) $c_k\geq0$ (Lemma 1.2 in \cite{deng2017parallel}). Since we have proved the first two conditions and the third one $c_k \geq 0$ is obvious, the convergence rate of $o(1/k)$ is proven. 
\end{proof}
\end{appendix}

\end{document}